\apptocmd{\sloppy}{\hbadness 10000\relax}{}{}
\newcommand{\comment}[1]{} 
\newcommand{\Z}{\mathbb{Z}}
\newcommand{\N}{\mathbb{N}}
\newcommand{\cA}{\mathcal A}
\newcommand{\cE}{\mathcal E}
\newcommand{\cF}{\mathcal F}
\newcommand{\cK}{\mathcal K}
\newcommand{\cP}{\mathcal P}
\newcommand{\cR}{\mathcal R}
\newcommand{\cT}{\mathcal T}
\newcommand{\cU}{\mathcal U}
\newcommand{\fA}{\mathfrak{A}}
\newcommand{\fS}{\mathfrak{S}}
\newcommand{\fU}{\mathfrak{U}}
\newcommand{\fX}{\mathfrak{X}}
\newcommand{\topdf}{\texorpdfstring}
\newcommand{\BF}{\mathfrak{BF}}
\newcommand{\xto}{\xrightarrow}
\newcommand{\iso}{\overset{\sim}{\longrightarrow}}
\DeclareMathOperator{\ad}{ad}
\DeclareMathOperator{\reg}{reg}
\DeclareMathOperator{\cl}{cl}
\DeclareMathOperator{\coker}{coker}
\DeclareMathOperator{\colim}{colim}
\DeclareMathOperator{\id}{id}
\DeclareMathOperator{\im}{im}
\DeclareMathOperator{\supp}{supp}
\DeclareMathOperator{\inc}{inc}
\DeclareMathOperator{\End}{End}
\DeclareMathOperator{\can}{can}
\DeclareMathOperator{\ev}{ev}
\DeclareMathOperator{\inv}{inv}
\DeclareMathOperator{\res}{res}
\DeclareMathOperator{\ind}{ind}
\DeclareMathOperator{\sink}{sink}
\DeclareMathOperator{\forg}{forg}
\DeclareMathOperator{\tor}{Tor}
\def\gr{\operatorname{gr}}
\def\op{\operatorname{op}}
\newcommand{\cat}[1]{\mathsf{#1}}
\newcommand{\Alg}{\cat{Alg}_\ell}
\newcommand{\aha}{\Alg}
\newcommand{\ahas}{\Alg^\ast}
\newcommand{\hltimes}{{ \ \widehat{\ltimes}\ }}
\newcommand{\gBF}{\BF_{\gr}}
\newcommand{\elmat}{\varepsilon}
\newcommand{\ucov}[1]{\widetilde{#1}}
\newcommand{\GAlg}{{G-\Alg^\ast}}
\newcommand{\GgrAlg}{{G_{\gr}-\Alg^\ast}}
\newcommand{\AnyAlg}{\fA}
\newcommand{\lra}{\longrightarrow}
\newcommand{\lspan}{\mathsf{span}_\ell}
\newcommand{\eps}{\epsilon}
\newcommand{\triqui}{\triangleleft}
\definecolor{thmcol}{RGB}{0, 81, 255}
\definecolor{citecol}{RGB}{0, 81, 255}
\definecolor{linkcol}{RGB}{0, 81, 255}
\definecolor{urlcol}{RGB}{0, 81, 255}
\numberwithin{equation}{section}
\theoremstyle{plain}
\newtheorem*{thm*}{Theorem}
\newtheorem{thm}[equation]{Theorem}
\newtheorem{lem}[equation]{Lemma}
\newtheorem{coro}[equation]{Corollary}
\newtheorem{prop}[equation]{Proposition}
\newtheorem{conv}[equation]{Convention}
\theoremstyle{definition}
\newtheorem{defn}[equation]{Definition}
\newtheorem{ex}[equation]{Example}
\newtheorem{stan}[equation]{Standing assumption}
\theoremstyle{remark}
\newtheorem{rmk}[equation]{Remark}
\newtheorem{rem}[equation]{Remark}
\newtheorem{nota}[equation]{Notation}
\newtheorem*{ack}{Acknowledgements}
\title[Graded \topdf{$K$}{K}-theory and Leavitt path algebras]{Graded  $K$-theory and Leavitt
path algebras}
\author{Guido Arnone and Guillermo Corti\~nas}
\address{Dep. Matem\'atica-IMAS\\ Facultad de Ciencias Exactas y Naturales\\
Universidad de Buenos Aires\\ Argentina}
\thanks{Both authors were partially supported by grant UBACyT 256BA from Universidad de Buenos Aires. The first named author was supported by a PhD fellowship from Universidad de Buenos Aires. The second named author was supported by CONICET and partially supported by grants PICT 2017-1395 from Agencia Nacional de Promoci\'on Cient\'\i fica y T\'ecnica, and PGC2018-096446-B-C21 from the Spanish Ministerio de Ciencia e Innovaci\'on.}
\date{}
\begin{document}

\begin{abstract} Let $G$ be a group and $\ell$ a commutative unital $\ast$-ring with 
an element $\lambda \in \ell$ such that $\lambda + \lambda^\ast = 1$.
We introduce variants of hermitian bivariant $K$-theory for $\ast$-algebras
equipped with a $G$-action or a $G$-grading. For any graph $E$ with finitely many vertices
and any weight function $\omega \colon E^1 \to G$, a distinguished triangle 
for $L(E)=L_\ell(E)$ in the hermitian $G$-graded bivariant $K$-theory category $kk^h_{G_{\gr}}$ is obtained, describing $L(E)$ as a cone of a matrix with coefficients in $\Z[G]$ associated to the incidence matrix of $E$ and the weight $\omega$.
In the particular case of the standard $\Z$-grading, and under mild assumptions on $\ell$, 
we show that the isomorphism class of $L(E)$ in $kk^h_{\Z_{\gr}}$ is determined by the graded
Bowen-Franks module of $E$. We also obtain results for the graded and hermitian graded $K$-theory of $\ast$-algebras in general and Leavitt path algebras in particular which are of independent interest, including hermitian and bivariant versions of Dade's theorem and of Van den Bergh's exact sequence relating graded and ungraded $K$-theory. 
\end{abstract}
\maketitle

\section{Introduction}\label{sec:intro}

A (directed) graph $E$ consists of sets $E^0$ of vertices and $E^1$ of
edges and source and range maps $s,r:E^1\to E^0$. A vertex $v\in E^0$ is a \emph{sink} or an \emph{infinite emitter} the number $\#s^{-1}\{v\}$ of edges it emits is zero or infinite, and is \emph{regular} otherwise. We say that $E$ is \emph{row-finite} if it has no infinite emitters and \emph{finite} if $E^0$ and $E^1$ are finite. We write $\reg(E)\subset E^0$ for the
subset of regular vertices. Let $A_E\in\Z^{\reg(E)\times E^0}$ be the matrix whose
$(v,w)$ entry is the number of edges from $v$ to $w$; put $I\in\Z^{E^0\times\reg(E)}$, 
$I_{v,w}=\delta_{v,w}$. Let $C_\infty=\langle \sigma\rangle$ be the 
infinite cyclic group; write $\Z[\sigma]$ for its group ring. 
The \emph{graded Bowen-Franks} $\Z[\sigma]$-module of $E$ is 
\begin{equation}\label{eq:bfgr}
    \BF_{\gr}(E)=\coker(I-\sigma A_E^t).
\end{equation}
If $E^0$ is finite, we put $[1]_E=\sum_{v\in E^0}[v]\in\BF_{\gr}(E)$. Fix a commutative unital ring $\ell$ with involution and let $L(E)$ be the Leavitt path algebra of $E$ over $\ell$ \cite{lpabook}, equipped with its canonical $\Z$-grading and involution. Write
$K_*^{\gr}(L(E))$ for the $K$-theory of graded, finitely generated projective
$L(E)$-modules. The group $K_0^{\gr}(L(E))$ is canonically a preordered $\Z[\sigma]$-module.  If $E^0$ is row-finite, then
there is a natural ordered $\Z[\sigma]$-module isomorphism (Corollary \ref{coro:bf-k0=k0gr})
\[
\BF_{\gr}(E)\otimes K_*(\ell)\iso K_*^{\gr}(L(E)),
\]
which, when $E^0$ is finite, maps
$[1]_E\otimes [1_\ell]\mapsto [1_{L(E)}]$.
Let $E$ and $F$ be finite graphs. The graded classification conjecture
\cite{hazrat}*{Conjecture 1} for finite graphs says that, when $\ell$ is a field, 
the existence of an order preserving
$\Z[\sigma]$-module isomorphism $\BF_{\gr}(E)\iso \BF_{\gr}(F)$ 
mapping $[1]_E\mapsto [1]_F$ implies that $L(E)\cong L(F)$ 
as $\Z$-graded algebras. In this paper we begin the study of this conjecture
in terms of the bivariant algebraic $K$-theory of algebras graded over a group $G$
$j_{G_{\gr}}:G_{\gr}-\Alg\to kk_{G_{\gr}}$ of \cite{kkg}, and of a hermitian
version $j^h_{G_{\gr}}:G_{\gr}-\Alg^\ast\to kk^h_{G_{\gr}}$ that we introduce here. As in the ungraded case \cite{kkh}, to define $kk^h_{G_{\gr}}$ we assume that there exists $\lambda\in\ell$ such that 
\begin{equation}\label{eq:lambda}
\lambda + \lambda^\ast = 1.    
\end{equation}
In Theorem \ref{lem:liftkkz} we prove the following.
\begin{thm}\label{intro:liftkkz}
Let $\ell$ be a ring with involution satisfying \eqref{eq:lambda}, and let $E$ and $F$ be graphs
with finitely many vertices. Write $L(E)$ and $L(F)$ for their Leavitt path $\ell$-algebras. Any $\Z[\sigma]$-module isomorphism $\xi:\BF_{\gr}(E)\iso \BF_{\gr}(F)$ lifts to an isomorphism 
$j^h_{\Z_{\gr}}(L(E))\iso j^h_{\Z_{\gr}}(L(F))$ in $kk^h_{\Z_{\gr}}$.
\end{thm}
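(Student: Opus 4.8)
The plan is to deduce the theorem from the distinguished triangle for Leavitt path algebras established earlier in the paper, specialized to the infinite cyclic group $G=C_\infty=\langle\sigma\rangle$ with the standard weight $\omega\equiv\sigma$. Write $R=\Z[\sigma]=\Z[C_\infty]=\Z[\sigma,\sigma^{-1}]$. In the language in which that triangle is phrased, a matrix over $R$ represents a morphism of $kk^h_{\Z_{\gr}}$ between finite direct sums of copies of $j^h_{\Z_{\gr}}(\ell)$; equivalently, there is a ring homomorphism $R\to\operatorname{End}_{kk^h_{\Z_{\gr}}}(j^h_{\Z_{\gr}}(\ell))$ carrying $\sigma$ to the invertible class of the grading shift. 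Since $kk^h_{\Z_{\gr}}$ is triangulated with finite direct sums, this homomorphism extends to an exact realization functor $\Phi\colon K^b(\mathrm{proj}\text{-}R)\to kk^h_{\Z_{\gr}}$ sending $R$ to $j^h_{\Z_{\gr}}(\ell)$ and each two-term complex $(R^{a}\xrightarrow{\,M\,}R^{b})$ to the cone of the associated morphism; being defined on the homotopy category, $\Phi$ sends homotopy equivalences to isomorphisms. With this notation, the earlier distinguished triangle says precisely that, writing
$$P_E\colon\qquad R^{\reg(E)}\ \xrightarrow{\ I-\sigma A_E^{t}\ }\ R^{E^0}$$
in homological degrees $1$ and $0$, one has $\Phi(P_E)\cong j^h_{\Z_{\gr}}(L(E))$ up to a suspension independent of $E$, and likewise for $F$.

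First I would check that $I-\sigma A_E^{t}\colon R^{\reg(E)}\to R^{E^0}$ is injective. The submatrix on the rows and columns indexed by $\reg(E)$ has the form $\mathrm{Id}-\sigma B$ with $B$ an integer matrix; its determinant is a polynomial in $\sigma$ with constant term $1$, hence a nonzero element of the integral domain $R$, so that submatrix is injective, and a fortiori so is $I-\sigma A_E^{t}$. Therefore $P_E$ has homology concentrated in degree $0$, where $H_0(P_E)=\coker(I-\sigma A_E^{t})=\BF_{\gr}(E)$; that is, $P_E$ is a finite free resolution of $\BF_{\gr}(E)$, and the same holds with $F$ in place of $E$.

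Given the $\Z[\sigma]$-module isomorphism $\xi\colon\BF_{\gr}(E)\iso\BF_{\gr}(F)$, the classical fact that finite projective resolutions of isomorphic modules are isomorphic in $K^b(\mathrm{proj}\text{-}R)$ then yields an isomorphism $P_E\cong P_F$ in $K^b(\mathrm{proj}\text{-}R)$ lifting $\xi$; concretely, $I-\sigma A_E^{t}$ and $I-\sigma A_F^{t}$ become equivalent after stabilizing by identity matrices. Applying $\Phi$ gives
$$j^h_{\Z_{\gr}}(L(E))\ \cong\ \Phi(P_E)\ \cong\ \Phi(P_F)\ \cong\ j^h_{\Z_{\gr}}(L(F))$$
in $kk^h_{\Z_{\gr}}$, and tracing through the construction shows this isomorphism lifts $\xi$, in the sense that it induces $\xi$ on graded Bowen--Franks modules via Corollary \ref{coro:bf-k0=k0gr}.

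I expect the main obstacle to be the passage from ``$\xi$ is an isomorphism of the cokernels $\BF_{\gr}(E)\iso\BF_{\gr}(F)$'' to ``$\xi$ is an isomorphism $P_E\cong P_F$ of two-term complexes'': this is exactly where injectivity of $I-\sigma A_E^{t}$ is indispensable, since otherwise $P_E$ would carry uncontrolled homology in degree $1$ that $\xi$ does not see. The remaining work is bookkeeping around the functor $\Phi$ in the hermitian graded setting --- notably that the grading shift is an invertible endomorphism of $j^h_{\Z_{\gr}}(\ell)$, that this endomorphism is representable within the hermitian framework, and that $\Phi$ is well defined on the homotopy category --- all of which should be available from the structural results on $kk^h_{\Z_{\gr}}$, the hermitian graded analogues of Dade's theorem and of Van den Bergh's sequence, and the distinguished triangle itself, as developed earlier in the paper.
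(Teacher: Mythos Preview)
Your proof is correct and follows essentially the same route as the paper: both use the injectivity of $I-\sigma A_E^{t}$ (the paper's Lemma~\ref{lem:mononotepi}, which you reprove via the determinant of the $\reg(E)\times\reg(E)$ block) to obtain length-one free $\Z[\sigma]$-resolutions of $\BF_{\gr}(E)$ and $\BF_{\gr}(F)$, lift $\xi$ to a homotopy equivalence of these two-term complexes, and then pass to cones via the distinguished triangle of Corollary~\ref{coro:g-triang}. The paper's proof compresses the lifting step into a citation of \cite{kkhlpa}*{Lemma 6.4}, whereas you package it as a realization functor $\Phi\colon K^b(\mathrm{proj}\text{-}\Z[\sigma])\to kk^h_{\Z_{\gr}}$; your hedge ``up to a suspension independent of $E$'' is unnecessary since $L(E)$ is literally the cone in that triangle, but this does no harm.
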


The exact meaning of lifting in the theorem above is made precise in 
Theorem \ref{lem:liftkkz}. The forgetful functor $\Alg^\ast\to \Alg$ induces
a functor $kk^h_{\Z_{\gr}}\to kk_{\Z_{\gr}}$, so under the hypothesis of 
Theorem \ref{intro:liftkkz} we also have an isomorphism
$j_{\Z_{\gr}}(L(E))\cong j_{\Z_{\gr}}(L(F))$. Further, we show that under mild
additional assumptions on the homotopy algebraic $K$-theory $KH_*(\ell)$ of $\ell$, which are
satisfied, for example, if $\ell$ is a field, a PID, or a  noetherian
regular local ring, the Bowen-Franks module classifies Leavitt path algebras
in both $kk_{\Z_{\gr}}$ and $kk_{\Z_{\gr}}^h$. In Theorem \ref{thm:classif} we prove
the following.
 
\begin{thm} \label{intro:classif}
Assume that $\ell$ satisfies \eqref{eq:lambda}, that $KH_{-1}(\ell) = 0$ and that the canonical morphism $\Z \to KH_0(\ell)$ 
is an isomorphism. Then for each pair of graphs $E$ and $F$ with finitely many vertices, the following 
are equivalent:
\begin{itemize}
    \item[(i)] The algebras $L(E)$ and $L(F)$ are $kk^h_{\Z_{\gr}}$-isomorphic.
    \item[(ii)] The algebras $L(E)$ and $L(F)$ are $kk_{\Z_{\gr}}$-isomorphic.
    \item[(iii)] The $\Z[\sigma]$-modules $\gBF(E)$ and $\gBF(F)$ are isomorphic.  
\end{itemize}
\end{thm}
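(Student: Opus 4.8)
The plan is to prove the cycle of implications (iii)$\,\Rightarrow\,$(i)$\,\Rightarrow\,$(ii)$\,\Rightarrow\,$(iii). The implication (iii)$\,\Rightarrow\,$(i) is precisely Theorem~\ref{intro:liftkkz}: any $\Z[\sigma]$-module isomorphism $\gBF(E)\iso\gBF(F)$ lifts to an isomorphism $j^h_{\Z_{\gr}}(L(E))\iso j^h_{\Z_{\gr}}(L(F))$ in $kk^h_{\Z_{\gr}}$, which is assertion (i). For (i)$\,\Rightarrow\,$(ii) one applies the forgetful functor $kk^h_{\Z_{\gr}}\to kk_{\Z_{\gr}}$ (forgetting the involution): it carries $j^h_{\Z_{\gr}}(L(E))$ to $j_{\Z_{\gr}}(L(E))$, hence a $kk^h_{\Z_{\gr}}$-isomorphism between the $L(E)$'s to a $kk_{\Z_{\gr}}$-isomorphism. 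Only \eqref{eq:lambda} is needed in these two steps; the assumptions $KH_{-1}(\ell)=0$ and $\Z\iso KH_0(\ell)$ enter solely in the remaining implication, which is the substantive one.

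For (ii)$\,\Rightarrow\,$(iii) the strategy is to recover $\gBF(E)$ from $j_{\Z_{\gr}}(L(E))$ by means of a $kk_{\Z_{\gr}}$-invariant, namely the homological functor $kk_{\Z_{\gr}}(\ell,-)$. This functor factors through right modules over the ring $R:=kk_{\Z_{\gr}}(\ell,\ell)$ (which acts by precomposition), naturally in its argument, so a $kk_{\Z_{\gr}}$-isomorphism $j_{\Z_{\gr}}(L(E))\cong j_{\Z_{\gr}}(L(F))$ yields an isomorphism $kk_{\Z_{\gr}}(\ell,L(E))\cong kk_{\Z_{\gr}}(\ell,L(F))$ of $R$-modules. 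The first task --- in which the hypotheses on $KH_\ast(\ell)$ are consumed --- is to compute $R$ and $kk_{\Z_{\gr}}(\ell,\ell[1])$. Using the paper's results on graded $K$-theory (the graded Dade theorem and the bivariant Van den Bergh sequence), which reduce these groups to shifted ordinary $KH$-groups of $\ell$, and the assumptions $\Z\iso KH_0(\ell)$, $KH_{-1}(\ell)=0$, one obtains $R\cong KH^{\gr}_0(\ell)\cong\Z[\sigma]$, the group ring of $C_\infty$ (with $\sigma$ the class of the degree-shift autoequivalence), and $kk_{\Z_{\gr}}(\ell,\ell[1])\cong KH^{\gr}_{-1}(\ell)=0$.

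With these identifications in hand, I would push the distinguished triangle of the paper's structural theorem --- which lives in $kk^h_{\Z_{\gr}}$ --- forward along the forgetful functor, obtaining in $kk_{\Z_{\gr}}$ the triangle
\[
\bigoplus_{v\in\reg(E)}\ell\xrightarrow{\ I-\sigma A_E^t\ }\bigoplus_{w\in E^0}\ell\longrightarrow j_{\Z_{\gr}}(L(E))\longrightarrow\Big(\bigoplus_{v\in\reg(E)}\ell\Big)[1],
\]
and apply $kk_{\Z_{\gr}}(\ell,-)$. Since this functor is homological, commutes with the (finite) direct sums, carries the first arrow to the $\Z[\sigma]$-linear map $I-\sigma A_E^t$ between free $\Z[\sigma]$-modules --- this being part of the content of the structural theorem --- and vanishes on the last term because $kk_{\Z_{\gr}}(\ell,\ell[1])=0$, the resulting long exact sequence collapses to an isomorphism of $\Z[\sigma]$-modules
\[
kk_{\Z_{\gr}}(\ell,L(E))\;\cong\;\coker\!\Big(\Z[\sigma]^{\reg(E)}\xrightarrow{\ I-\sigma A_E^t\ }\Z[\sigma]^{E^0}\Big)\;=\;\gBF(E),
\]
natural in $E$. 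Combined with the $R$-module isomorphism obtained from (ii), this forces $\gBF(E)\cong\gBF(F)$ as $\Z[\sigma]$-modules, which is (iii).

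The hard part will be the preliminary computation of the second paragraph --- identifying $kk_{\Z_{\gr}}(\ell,-)$ with graded $KH_0$ and deducing $KH^{\gr}_0(\ell)\cong\Z[\sigma]$ and $KH^{\gr}_{-1}(\ell)=0$ from the two assumptions on $KH_\ast(\ell)$ --- together with the bookkeeping needed when $E$ is not row-finite (so that $L(E)$ is non-unital), although finiteness of $E^0$, hence of $\reg(E)$, keeps the outer terms of the triangle honest finite direct sums and $I-\sigma A_E^t$ an honest finite matrix over $\Z[\sigma]$. Once those inputs are secured, transporting the triangle to $kk_{\Z_{\gr}}$, taking the long exact sequence, and reading off the cokernel are routine.
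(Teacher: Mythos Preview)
Your proof is correct and follows essentially the same cycle of implications as the paper: (iii)$\Rightarrow$(i) via Theorem~\ref{lem:liftkkz}, (i)$\Rightarrow$(ii) via the forgetful functor, and (ii)$\Rightarrow$(iii) by applying $kk_{\Z_{\gr}}(\ell,-)$ to the distinguished triangle of Corollary~\ref{coro:g-triang} after computing the coefficient ring. The only quibble is your route to $kk_{\Z_{\gr}}(\ell,\ell)\cong\Z[\sigma]$ and $kk_{\Z_{\gr}}(\ell,\ell[1])=0$: Dade does not apply (the trivially graded $\ell$ is not strongly $\Z$-graded) and the Van den Bergh sequence alone does not split off these groups; the paper instead uses Proposition~\ref{prop:end-el} (equivalently Corollary~\ref{coro:kkhgr-ell=khcross} plus $\Z\hltimes\ell=\ell^{(\Z)}$), which under your hypotheses on $KH_0(\ell)$ and $KH_{-1}(\ell)$ immediately gives both computations.
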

We also obtain several other results about (hermitian) graded $K$-theory of  ($\ast$-) algebras in general and Leavitt path algebras in particular which we think are of independent interest. 
Building upon work of Ara, Hazrat, Li and Sims in \cite{steinberg} and Preusser in \cite{preuhyper}, we show in Theorems \ref{thm:k-cross=k-gr} and \ref{thm:kh-cross=kh-gr} that if $R$ is a ($*$-) ring, graded over a group $G$, and having (self-adjoint) graded local units, then the (hermitian) graded $K$-theory of $R$ is the (hermitian) $K$-theory of the crossed product 
\begin{equation}\label{intro:kcross=kgr}
K_*^{G_{\gr}}(R)=K_*(G\hltimes R),\,\, 
K_*^{h,G_{\gr}}(R)=K^h_*(G\hltimes R).
\end{equation}
The definition of $G\hltimes R$ is recalled in Subsection \ref{subsec:cropro}; the same formulas hold for homotopy algebraic and homotopy hermitian $K$-theory under no unitality assumptions (see Corollary \ref{coro:kkhgr-ell=khcross}).  For example if $E$ is row-finite and $R=L(E)$ with its canonical $\Z$-grading, then $\Z\hltimes L(E)=L(\ucov{E})$ is the Leavitt path algebra of the universal covering of $E$ (Proposition \ref{prop:crosscover}). We use this to compute, for a row-finte graph $E$ and an algebra $R$ with local units, equipped with the trivial grading, the graded $K$-theory of the algebra $L(E)\otimes R$  and, in case $R$ is a $\ast$-algebra with self-adjoint graded local units, also its hermitian graded $K$-theory; we show in Corollary \ref{coro:bf-k0=k0gr} that
\begin{equation}\label{intro:kgrle}
K_*^{\gr}(L(E)\otimes R)=\gBF(E)\otimes K_*(R),\,\, K_*^{h,\gr}(L(E)\otimes R)=\gBF(E)\otimes K^h_*(R).
\end{equation}
Again the same formulas hold for homotopy algebraic and homotopy hermitian $K$-theory without any unitality assumptions on $R$. We also consider gradings on $L(E)$ with values on a group $G$ for an arbitrary graph $E$. These are associated to functions $\omega: E^1\to G$; we write $L_\omega(E)$ for $L(E)$ with the induced grading. There is a universal covering $\widetilde{E}=(\widetilde{E,\omega})$ and again $G\hltimes L(E)=L(\widetilde{E,\omega})$. 
Let 
\begin{equation}\label{intro:weightind}
A_{\omega}\in\Z[G]^{\reg(E)\times E^0},\,\, (A_{\omega})_{v,w}=\sum_{v\overset{e}\to w}\omega(e). 
\end{equation}
Observe that for $G=\Z$ and $\omega$ the constant grading above, $A_{\omega}=\sigma A_{E}\in\Z[\sigma]^{\reg(E)\times E^0}$. Define $\gBF(E,\omega) := \coker(I-A_\omega^t)$.
We show in Corollary \ref{coro:kle} that if $R$ is a trivially $G$-graded, unital, regular supercoherent algebra, then there is an exact sequence
\begin{equation}\label{intro:kle}
0\to\gBF(E,\omega)\otimes K_{n}(R)\to  K_n^{G_{\gr}}(L_\omega(E)\otimes R)\to \ker((I-A_\omega^t)\otimes K_{n-1}(R))\to 0.
\end{equation}
If in addition $R$ is a $*$-algebra and $2$ is invertible in $R$ we have a similar sequence for $G$-graded hermitian $K$-theory. Both \eqref{intro:kle} and its hermitian counterpart hold for homotopy $K$-theory for every trivially $G$-graded ($\ast$-) algebra $R$, where in the hermitian case we assume that the ground ring $\ell$ satisfies \eqref{eq:lambda}. We show in Proposition \ref{prop:end-el} that there is a ring isomorphism
$kk^h_{G_{\gr}}(\ell,\ell) = kk^h(\ell,\ell)\otimes \Z[G^{\op}]$. In particular, if $E^0$ is finite, the matrix $I-A_{\omega}^t$ defines an element of $kk^h_{G_{\gr}}(\ell^{\reg(E)},\ell^{E^0})$, and Corollary \ref{coro:g-triang} says that there is a distinguished triangle in $kk^h_{G_{\gr}}$ which, omitting $j_{G_{\gr}}^h$, has the form
\begin{equation}\label{intro:triang}
\xymatrix{\ell^{\reg(E)}\ar[r]^(.6){I-A_{\omega}^t}&\ell^{E^0}\ar[r]& L_\omega(E).}
\end{equation}

Recall that a $G$-graded ring $A$ is \emph{strongly graded} if $A_gA_h=A_{gh}$ for all $g,h\in A$. A theorem of Dade characterizes strongly $G$-graded unital rings in terms of module categories; we obtain a hermitian variant
of Dade's theorem for $\ast$-rings in terms of module categories with duality (Theorem \ref{thm:h-dade}). We also show (Theorem \ref{thm:dade}) that if $B$ is a strongly $G$-graded $\ast$-algebra, then the canonical inclusion 
\begin{equation}\label{intro:bivadade}
B_1\subset G\hltimes B    
\end{equation} 
is a $kk^h$-equivalence. For example if $A$ is any $\Z$-graded $\ast$-algebra, then $A[t,t^{-1}]$ is strongly $\Z$-graded. Using this together with 
\eqref{intro:triang} we show in Theorem \ref{thm:vdb} that there is a distinguished triangle in $kk^h$
\begin{equation}\label{intro:vdb}
\xymatrix{
\Z\hltimes A\ar[rr]^{1-\Z\hltimes\sigma}&& \Z\hltimes A\ar[r] & A.
}
\end{equation}
As a consequence of \eqref{intro:vdb} and of the homotopy hermitian $K$-theory version of \eqref{intro:kcross=kgr}, we obtain an exact sequence
\[
KH^h_{n+1}(A)\to KH_n^{h,\Z_{\gr}}(A)\xto{1-\sigma} KH_n^{h,\Z_{\gr}}(A)\to KH^h_{n}(A)
\]
for any $n\in \Z$ and any $\ast$-algebra $A$, and a similar sequence for non-hermitian homotopy $K$-theory of any ring $A$. For sufficiently regular $A$, $K^h$ and $K$ may be substituted for $KH^h$ above; in particular we recover the classical Van den Bergh exact sequence of \cite{vdb} for regular noetherian $A$, as well as a hermitian variant of his sequence (Corollary \ref{coro:vdb}).

The rest of this article is organized as follows. In Section \ref{sec:gbasic}
we recall basic definitions, notations and properties for algebras equipped
with an action of, or a grading over a group $G$. Section \ref{sec:cross}
contains the proof of the identities \eqref{intro:kcross=kgr} in Theorems
\ref{thm:k-cross=k-gr} and \ref{thm:kh-cross=kh-gr}; the basic idea is to use
the category isomorphism between graded $R$-modules and $G\hltimes R$-modules
due to Ara, Hazrat, Li and Sims \cite{steinberg} and the fact that the latter
preserves finite generation, proved in Preusser's article \cite{preuhyper}, and to check that the category
isomorphism intertwines the relevant duality functors. As an application we
also establish Theorem \ref{thm:h-dade}, which is a hermitian variant of
Dade's theorem. Section \ref{sec:kleav} is concerned with the proof of
\eqref{intro:kgrle}, obtained in Corollary \ref{coro:bf-k0=k0gr} as a
consequence of the more general Theorem \ref{thm:bf-k0=k0gr} which establishes
a similar formula with $K_n^h$ and $K_n$ replaced by any functor $H$ from
$\Z$-graded $\ast$-rings to abelian groups satisfying some mild assumptions. We also prove in Theorem \ref{thm:reflinj} that if $\ell$ is a field, then $K_0^{\gr}$ reflects injectivity of $\Z$-graded algebra homomorphisms
$L(E)\to R$ with $E$ finite. Thus if $\phi$ is such a homomorphism and $K_0^{\gr}(\phi)$ is injective, then so must be $\phi$. 
Section \ref{sec:stab} concerns appropriate notions of stability for functors
defined on the categories $G-\Alg^*$ of $G$-$\ast$-algebras and
$G_{\gr}-\Alg^\ast$ of $G$-graded $\ast$-algebras. Section \ref{sec:kkhgr}
introduces triangulated categories $kk^h_G$ and $kk^h_{G_{\gr}}$ and functors 
\[
j^h_G \colon \GAlg \to kk^h_G, \qquad
j^h_{G_{\gr}} \colon \GgrAlg \to kk_{G_{\gr}}^h.
\]
Each of these functors satisfies a universal property, which essentially says
that it is universal among excisive, homotopy invariant and stable homology
theories. In Section \ref{sec:adj} we show that the adjointness theorems
proved by Ellis in \cite{kkg} remain valid in the hermitian setting. In
particular for a subgroup $H\subset G$, the induction and restriction functors
define an adjoint pair
$kk^h_{H}\leftrightarrows kk^h_{G}$ (Theorem \ref{thm:ind-res}) and the cross
product functors induce inverse category equivalences
$kk^h_G\overset{\sim}\longleftrightarrow kk^h_{G_{\gr}}$ (Theorem \ref{thm:baaj-skandalis}). 
Let $KH^h$ be the homotopy hermitian
$K$-theory of \cite{kkh}*{Section 3}. In Section \ref{sec:enrich} we compute the coefficient ring $kk^h_{G_{\gr}}(\ell,\ell)$. We show in Proposition \ref{prop:end-el} that composing the canonical isomorphisms between $kk$-groups provided by 
Theorems \ref{thm:ind-res} and \ref{thm:baaj-skandalis} one obtains
a ring isomorphism
\begin{equation}\label{intro:coeffring}
\Z[G^{\op}]\otimes KH^h_0(\ell)\iso kk_{G_{\gr}}^h(\ell,\ell).    
\end{equation}
This leads to a left action of $G$ by natural transformations on $kk^{h}_{G_{\gr}}(\ell,A)$ for all $A\in G_{\gr}-\ahas$; Lemma \ref{lem:cdot=cdot'} shows this action agrees with that induced by the $G$ action on $G\hltimes A$ via the isomorphism $kk^{h}_{G_{\gr}}(\ell,A)=kk^h(\ell, G\hltimes A)$. When $G$ is abelian, the tensor product of graded algebras is again $G$-graded; this together with the isomorphism \eqref{intro:coeffring} leads to an enrichment of $kk^h_{G_{\gr}}$ over $\cat{mod}_{\Z[G]}$, which is described by Proposition \ref{prop:multiabel}. 
Section \ref{sec:dade} is devoted to the proof of Theorem \ref{thm:dade}, 
which says that if $B$ is strongly $G$-graded, then the map 
\eqref{intro:bivadade} is a $kk^h$-equivalence.

Let $L(E)\cong C(E)/\cK(E)$ be the usual presentation
 of the Leavitt path algebra as a quotient of the Cohn algebra
 \cite{lpabook}*{Proposition 1.5.5}. Let $\omega:E^1\to G$ be a weight 
 function
 and let $L_\omega(E)$, $C_\omega(E)$ and $\cK_\omega(E)$ be the same algebras 
 equipped with the associated $G$-gradings. In Section
\ref{sec:triang} we study the Cohn extension
\begin{equation}\label{ext:cohn}
0\to \cK_\omega(E)\to C_\omega(E)\to L_\omega(E)\to 0
\end{equation}
in $kk^h_{G_{\gr}}$. Propositions \ref{prop:q-iso} and \ref{prop:phi-iso} show that for any excisive, homotopy invariant and stable homology theory $H$ of $G$-graded $\ast$-algebras which commutes with sums of sufficiently high cardinality we have  $H(\cK_\omega(E))\cong H(\ell)^{(\reg(E))}$ and $H(C_\omega(E))\cong H(\ell)^{(E^0)}$. The map $\xi:H(\ell)^{(\reg(E))}\to H(\ell)^{(E^0)}$ induced by the inclusion $\cK_\omega(E)\subset C_\omega(E)$ is computed in Theorem \ref{thm:triang}, and so we obtain a triangle
\begin{equation*}
\xymatrix{H(\ell)^{(\reg(E))}\ar[r]^(.6){I-A_{\omega}^t}&H(\ell)^{(E^0)}\ar[r]& H(L_\omega(E)).}
\end{equation*}
When $E^0$ is finite, we may take $H=j_{G_{\gr}}^h$; this gives  triangle \eqref{intro:triang} (Corollary \ref{coro:g-triang}).

In Section \ref{sec:vdb} we apply Theorems \ref{thm:dade} and \ref{thm:triang} to prove Theorem \ref{thm:vdb} which establishes the distinguished triangle \eqref{intro:vdb}, and Corollary \ref{coro:vdb} which derives long exact sequences relating graded and ungraded (hermitian, homotopy) $K$-theory.

In Section \ref{sec:classif} we use triangle \eqref{intro:triang} to prove Theorems \ref{intro:liftkkz} and \ref{intro:classif} as Theorems \ref{lem:liftkkz} and \ref{thm:classif}.

\begin{ack} A good part of the material of Sections \ref{sec:stab}, \ref{sec:kkhgr}, \ref{sec:adj}, \ref{sec:enrich}, \ref{sec:triang} and \ref{sec:classif} first appeared in the diploma thesis of the first author \cite{tesigui}. 
\end{ack}

\section{Preliminaries on algebras, involutions, gradings, and actions}\label{sec:gbasic}

\subsection{Algebras and involutions}
A commutative, unital ring $\ell$ with involution $\ast$
will be fixed throughout the article. By an \emph{$\ell$-algebra}
we mean a symmetric $\ell$-bimodule $A$ together with 
an associative multiplication $A \otimes_\ell A \to A$. An involution
of an $\ell$-algebra is an additive morphism $\ast \colon A \to A$
such that 
\[
(a^\ast)^\ast = a, \quad 
(ab)^\ast  = b^\ast a^\ast, \quad 
(\mu a)^\ast = \mu^\ast a^\ast
\]
for all $a,b \in A$ and $\mu \in \ell$. A \emph{$\ast$-algebra} is an $\ell$-algebra
together with an involution; a \emph{$\ast$-morphism} $f \colon A \to B$ between 
$\ast$-algebras is an $\ell$-algebra morphism such that $f(a^\ast) = f(a)^\ast$ for each
$a \in A$. A two-sided ideal 
$I \triangleleft A$ is a \emph{$\ast$-ideal} if in addition it is an $\ell$-submodule of $A$ 
such that $I^\ast \subset I$. The category of $\ast$-algebras together with $\ast$-morphisms will be denoted 
$\Alg^\ast$. 
Tensor products of algebras are taken over $\ell$; we write $\otimes$ for $\otimes_\ell$. We also use $\otimes$ for tensor products of abelian groups, e.g. $K_0(R)\otimes K_0(S)=K_0(R)\otimes_{\Z} K_0(S)$. We write $R\otimes_\Z S$ for the tensor product of rings which are not necessarily $\ell$-algebras. 

An element $u$ in a unital $\ast$-algebra $R$
is said to be \emph{unitary} if $uu^\ast = u^\ast u = 1$. If $\eps \in R$ is both
central and unitary, we say that $\phi \in R$ is
\emph{$\eps$-hermitian} if $\phi=\eps\phi^*$.
An $\eps$-hermitian unit $\phi \in R^\times$ gives rise to an involution
\[
    (-)^\phi \colon R \to R, \quad x \mapsto \phi^{-1}x^\ast \phi.
\]
We write $R^\phi$ for $R$ viewed as a $\ast$-algebra with this involution.
Similarly, for a $*$-ideal $A\triqui R$ the notation $A^\phi$ will be employed for $A$ equipped with the involution induced from $R^\phi$.
\numberwithin{equation}{subsection}
\begin{nota}
\label{nota:pr-mor} An element $a$ in $\ast$-algebra $A$ is \emph{self-adjoint} if $a=a^*$. A \emph{projection} 
$p\in A$ is a self-adjoint idempotent element. Such an element determines 
a $\ast$-morphism which we will also call $p$.
\end{nota}

\subsection{Actions and gradings}

From now on, we fix a group $G$, which we regard as a one object category. The category $\GAlg$ of \emph{$G$-$\ast$-algebras} is the category of functors $G\to\Alg^\ast$.
A \emph{$G$-graded $\ast$-algebra} is a $G$-graded $\ell$-algebra $A=\bigoplus_{g\in G}A_g$ 
equipped with an involution such that $A_g^\ast \subset A_{g^{-1}}$ for all $g \in G$. 
The degree of a homogeneous element $a \in A$ will be denoted $|a|$. We write 
$\GgrAlg$ for the category of $G$-graded $\ast$-algebras together with $\ast$-homomorphisms
that are homogeneous of degree $1 \in G$. The letter 
$\AnyAlg$ will refer to either $\GAlg$ or $\GgrAlg$.

\begin{rmk} \label{rmk:tensor-gr} If $A$ is a $\ast$-algebra and $B$ a $G$-graded $\ast$-algebra, 
their tensor product $A \otimes_\ell B$ is $G$-graded by defining $|a \otimes b| = |b|$ for each 
$a \in A, b \in B$ with $b$ homogeneous. If both $A$ and $B$ are $G$-graded $\ast$-algebras and $G$ is abelian, then $A \otimes_\ell B$ can be made into a $G$-graded $\ast$-algebra by 
defining $|a \otimes b| = |a||b|$ for each pair of homogeneous elements $a \in A, b \in B$. When $G$ is not abelian, 
this assignment fails to be compatible with multiplication.
\end{rmk}

\subsection{Matrix algebras}
Let $X$ be a set and $A$ a $\ast$-algebra. Recall that Wagoner's cone is the ring
\begin{equation}\label{eq:wacon}
C_X A := \{f \colon X \times X \to A : |\supp f(x,-)|, |\supp f(-,x)| < \infty \ (\forall x \in X)\}
\end{equation}
with the operations given by the pointwise sum and the convolution product. Its canonical 
involution is defined to be $f^\ast(x,y) = f(y,x)^\ast$ for each $f \in C_X A$. It 
contains Karoubi's cone 
\begin{equation}\label{eq:karcon}
\Gamma_X A = \{f \colon X \times X \to A : |\im f| < 
\infty \text{ and } (\exists N \geq 1) \text{ s.t. } |\supp(x,-)|, |\supp(-,x)| 
\leq N (\forall x \in X)\}
\end{equation}
as a $*$-subalgebra. We will also consider the $\ast$-ideal of $\Gamma_X A$ consiting of 
$X$-indexed finitely supported matrices
\[
M_X A := \{f \colon X \times X \to A : \supp(f) \text{ is finite}\}.
\]
In the  case $X=\N$ we use specific notation; we write $M_\infty A:=M_\N A$.
\begin{conv} When $A = \ell$, we omit it from the notation in all matrix algebras defined 
above.
\end{conv}
There is an isomorphism of $\ast$-algebras 
$M_X A \cong M_X \otimes_\ell A$; we write $\elmat_{x,y} \in M_X$ 
for the characteristic function of the pair $(x,y) \in X \times X$ and
\[
 \iota_x \colon A \to M_X A, \quad a \mapsto \elmat_{x,x} \otimes a
\]
for the inclusion in the diagonal entry corresponding to $x \in X$.

If $A$ is a $G$-$\ast$-algebra and $X$ is a $G$-set, then the algebras $C_X A$, $\Gamma_X A$
and $M_X A$ defined above
are $G$-$\ast$-algebras with action $(g \cdot f)(x,y) = g \cdot f(g^{-1}x, g^{-1}y)$. 
The analogue situation for $G$-graded $\ast$-algebras requires some preliminary definitions. 
A \emph{$G$-graded set} is a set $X$ together with a function 
$\omega\colon X \to G$, which we call a \emph{weight}. 
A morphism of $G$-graded sets $(X,\omega) \to (Y,\nu)$ is a map $f\colon X\to Y$
such that $\nu\circ f = \omega$. We shall often drop the weight from our vocabulary, and say simply that $X$ is a graded set; in this case we write $|\ \ |$ for the weight function of $X$.

\begin{defn} Let $B$ be a $G$-graded $\ast$-algebra and $X$ a $G$-graded set. Let $C_X^\circ B$ be the $\ell$-linear span of those elements $f\in C_X B$ such that $f(x,y)$ is homogeneous for each $(x,y) \in X\times X$ and such that the function $(x,y) \mapsto |x||f(x,y)||y|^{-1}$ is constant. The $*$-algebra $C_X^\circ B$ is $G$-graded;  its homogeneous component of degree $g \in G$ is
\[
(C_X^\circ B)_g := \{f \in C_X^\circ B : |x| \cdot |f(x,y)| = g|y| \ (\forall x,y \in X)\}.
\]
Set $\Gamma_X^\circ B := C_X^\circ B \cap \Gamma_X B$; one checks that $\Gamma_X^
\circ B\subset C_X^\circ B$ is $G$-graded $\ast$-subalgebra.
\end{defn}
\subsection{Matricial stability}\label{subsec:matstab}
Let $X$ be a set and $\cat{C}$ a  category; a functor $F \colon \AnyAlg\to \cat{C}$  
is called \emph{$M_X$-stable} if it sends each inclusion 
$\iota_x \colon A \to M_X A$ with $A \in \AnyAlg$ and $x \in X$ to an isomorphism. By the 
argument of \cite{kkh}*{Lemma 2.4.1}, this is equivalent to asking that the natural map $F(\iota_x)$ be an isomorphism for a fixed
$x \in X$.
\subsection{Crossed products}\label{subsec:cropro}
We now recall the definitions 
of crossed products for $G$-$\ast$-algebras and $G$-graded $\ast$-algebras. 
We adopt the notations of \cite{kkg}. Let $\ell[G]$ be the group algebra of $G$ 
and $A$ a $G$-$\ast$-algebra. The \emph{crossed product} $A
\rtimes G$ is the $\ell$-module $A \otimes_\ell \ell[G]$ together
with the multiplication defined by the rule
$(a \rtimes g) \cdot (b \rtimes h) = a(g \cdot b) \rtimes gh$.
Here $a\rtimes g$ is notation for the elementary tensor $a \otimes g$. The 
crossed product is a $G$-graded $\ast$-algebra with homogeneous components
$(A \rtimes G)_g = A \rtimes g = \lspan \{a \rtimes g : a \in A \}$    
and involution
$(a \rtimes g)^\ast = g^{-1} \cdot a^\ast \rtimes g^{-1}$.

If $B$ is a $G$-graded $\ast$-algebra, we may regard it as a comodule algebra over the (nonunital) Hopf algebra $\ell^{(G)}$. We write $G\hltimes B=\ell^{(G)}\ltimes B$ for the \emph{crossed product}. As an $\ell$-module, $G\hltimes B$ is just $\ell^{(G)}\otimes B$; we write $\chi_g\in\ell^{(G)}$ for the characteristic function and put $\chi_g\hltimes b:=\chi_g\otimes b$. As a $G$-$\ast$-algebra, $G\hltimes B$ embeds in $M_GB$ via the identification $\chi_g \hltimes b=\elmat_{g,g|b|} \otimes b$ for 
$g \in G$ and homogeneous $b\in B$. Thus for $g,h\in G$ and $b,c\in B$ with $b$ homogeneous, we have
\begin{gather*}
(\chi_g\hltimes b)(\chi_h\hltimes c)=\delta_{h,g|b|}\chi_g\hltimes bc,\\
(\chi_g\hltimes b)^\ast=\chi_{g|b|}\hltimes b^*,\\
h\cdot (\chi_g\hltimes c)=\chi_{hg}\hltimes c.
\end{gather*}

\begin{rem}\label{rem:smash=crossed}
 In \cite{steinberg}*{Definition 2.1} the smash product  $A\#G$ of a $G$-graded algebra $A$ is defined. One checks that, for the grading $A^{\op}_g := A_{g^{-1}}$ ($g \in G$),  the map 
\[
A\#G \to (G \hltimes A^{\op})^{\op} , \quad ap_g \mapsto \chi_{g^{-1}}\hltimes a
\]
is an $G$-algebra isomorphism. 
\end{rem}

Consider the functors
\begin{equation}\label{fun:crossed}
    - \rtimes G \colon \GAlg \longleftrightarrow \GgrAlg \colon G \hltimes -.
\end{equation}
\begin{prop}\label{prop:impri} Let $A$ be a $G$-$\ast$-algebra and $B$ a $G$-graded $\ast$-algebra. We have a natural isomorphism of $G$-$\ast$-algebras $G\hltimes (A\rtimes G)\cong M_GA$ and a natural isomorphism of $G$-graded $\ast$-algebras $(G\hltimes B)\rtimes G \cong M_GB$. 
\end{prop}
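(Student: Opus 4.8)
The plan is to write down both isomorphisms explicitly on generators and then verify, using only the structure formulas recalled above, that each is a homomorphism of $\ast$-algebras, respects the relevant $G$-structure, and is bijective. Both statements are the algebraic incarnation of the Green--Takai imprimitivity isomorphism, so no step is conceptually hard; the whole proof is bookkeeping with indices.

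For the first isomorphism I would define
\[
\Phi\colon G\hltimes (A\rtimes G)\longrightarrow M_GA,\qquad
\Phi(\chi_h\hltimes (a\rtimes g))=\elmat_{h,hg}\otimes (h\cdot a),
\]
extended $\ell$-linearly. On underlying $\ell$-modules this is the assignment $\chi_h\otimes a\otimes g\mapsto \elmat_{h,hg}\otimes(h\cdot a)$, which is bijective because $(h,g)\mapsto (h,hg)$ is a bijection of $G\times G$ and $a\mapsto h\cdot a$ is an automorphism of $A$; hence $\Phi$ is an $\ell$-module isomorphism. Multiplicativity follows from $(\chi_g\hltimes b)(\chi_h\hltimes c)=\delta_{h,g|b|}\,\chi_g\hltimes bc$ together with $(a\rtimes g)(a'\rtimes g')=a(g\cdot a')\rtimes gg'$ and $(gh)\cdot=(g\cdot)(h\cdot)$; compatibility with involutions follows from $(\chi_g\hltimes b)^\ast=\chi_{g|b|}\hltimes b^\ast$, $(a\rtimes g)^\ast=g^{-1}\cdot a^\ast\rtimes g^{-1}$, and the fact that the $G$-action on $A$ is by $\ast$-automorphisms; and $G$-equivariance is immediate since $k\cdot(\chi_h\hltimes(a\rtimes g))=\chi_{kh}\hltimes(a\rtimes g)$ is sent to $\elmat_{kh,khg}\otimes((kh)\cdot a)=k\cdot\big(\elmat_{h,hg}\otimes(h\cdot a)\big)$. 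Naturality in $A$ is clear from the formula.

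For the second isomorphism I would first pin down the $G$-grading on $M_GB$: it is the one attached to $G$ viewed as a $G$-graded set with weight $\id_G$, for which $\elmat_{x,y}\otimes b$ is homogeneous of degree $x|b|\,y^{-1}$ whenever $b$ is homogeneous (that this is a grading is the same verification as for $C_X^\circ B$, and its degree $k$ component is spanned by the $\elmat_{x,y}\otimes b$ with $x|b|y^{-1}=k$). Then I would set
\[
\Psi\colon (G\hltimes B)\rtimes G\longrightarrow M_GB,\qquad
\Psi\big((\chi_g\hltimes b)\rtimes k\big)=\elmat_{g,\,k^{-1}g|b|}\otimes b,
\]
extended $\ell$-linearly; note that on the degree $1$ component this is exactly the canonical embedding $G\hltimes B\hookrightarrow M_GB$. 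On underlying modules it is $\chi_g\otimes b\otimes k\mapsto \elmat_{g,k^{-1}g|b|}\otimes b$, and for $b$ homogeneous of degree $d$ the map $(g,k)\mapsto (g,k^{-1}gd)$ is a bijection of $G\times G$, so $\Psi$ is an $\ell$-module isomorphism. It is degree preserving since $|\elmat_{g,k^{-1}g|b|}\otimes b|=g|b|(k^{-1}g|b|)^{-1}=k$. Multiplicativity is checked from $(\alpha\rtimes k)(\alpha'\rtimes k')=\alpha(k\cdot\alpha')\rtimes kk'$, the action $k\cdot(\chi_{g'}\hltimes b')=\chi_{kg'}\hltimes b'$ and the product formula for $G\hltimes B$; compatibility with involutions uses $(\alpha\rtimes k)^\ast=k^{-1}\cdot\alpha^\ast\rtimes k^{-1}$ and $(\chi_g\hltimes b)^\ast=\chi_{g|b|}\hltimes b^\ast$, which give $\Psi\big(((\chi_g\hltimes b)\rtimes k)^\ast\big)=\elmat_{k^{-1}g|b|,\,g}\otimes b^\ast=\big(\elmat_{g,k^{-1}g|b|}\otimes b\big)^\ast$. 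Naturality in $B$ is again clear.

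The only real point of care is in the second part: one must use the \emph{twisted} grading $|\elmat_{x,y}\otimes b|=x|b|\,y^{-1}$ on $M_GB$ rather than the naive one $|\elmat_{x,y}\otimes b|=|b|$ --- only with the twisted grading are the two sides isomorphic even as graded $\ell$-modules, and only for it is $\Psi$ degree preserving. Once that is fixed, the remaining obstacle is merely keeping the several small index computations consistent.
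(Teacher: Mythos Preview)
Your proposal is correct and your explicit formulas and verifications are exactly the computations one would carry out; the paper's own proof simply cites \cite{kkg}*{Proposition 7.4} for the underlying algebra isomorphisms and remarks that one checks these are $\ast$-homomorphisms, so your approach is the same one, written out in full rather than deferred to the reference. Your observation that $M_GB$ must carry the twisted grading $|\elmat_{x,y}\otimes b|=x|b|y^{-1}$ coming from the $G$-graded set $(G,\id_G)$ is precisely the paper's convention via $C_X^\circ B$, and your note that $\Psi$ restricted to degree $1$ recovers the canonical embedding $G\hltimes B\hookrightarrow M_GB$ is a nice consistency check.
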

\begin{proof} One checks that the natural isomorphisms of \cite{kkg}*{Proposition 7.4} are $\ast$-homomorphisms. 
\end{proof}

\subsection{Covering graphs}\label{subsec:covers}
Let $G$ be a group. The \emph{covering} of a graph $E$ associated to a weight $\omega: E^1\to G$ is the graph $\ucov{E}=\ucov{(E,\omega)}$ with vertices and edges defined by
\[
    \ucov{E}^i = E \times G \qquad (i = 0,1).
\]
Write $v_g = (v,g)$ and $e_g = (e,g)$ for each $v \in E^0, e \in E^1$; the source and range functions of $\ucov{E}$ are defined by
\[
    s(e_g) = v_g,  \quad r(e_g) = r(e)_{g\omega(e)}.
\]
Observe that left multiplication on the $G$ component gives an action of $G$ on $\tilde{E}$ by graph automorphisms. In particular $L(\tilde{E})$ is a $G$-algebra. 

\begin{prop}\label{prop:crosscover}
Let $E$ be a graph, $\omega:E^1\to G$ a weight, $L_\omega (E)$ the Leavitt path algebra equipped with its canonical involution and the $G$-grading induced by $\omega$, and $\ucov{E}=\ucov{(E,\omega)}$ the associated covering graph. There is an isomorphism of $G$-$*$-algebras $L\ucov{E}\iso G\hltimes L_\omega(E)$.
\end{prop}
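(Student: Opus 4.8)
The plan is to construct the isomorphism explicitly on generators and check that it respects all the structure. Recall that $L_\omega(E)$ is generated by elements $v$ ($v\in E^0$), $e$ and $e^*$ ($e\in E^1$), with $|v|=1$, $|e|=\omega(e)$, $|e^*|=\omega(e)^{-1}$. On the other side, $L\ucov{E}$ is generated by the vertices $v_g$, edges $e_g$ and ghost edges $e_g^*$ of the covering graph, with the $G$-action $h\cdot v_g = v_{hg}$, $h\cdot e_g = e_{hg}$. Using the embedding $G\hltimes L_\omega(E)\subset M_G L_\omega(E)$ recalled in Subsection \ref{subsec:cropro}, I would define
\[
\Phi\colon L\ucov{E}\to G\hltimes L_\omega(E),\qquad
v_g\mapsto \chi_g\hltimes v,\quad
e_g\mapsto \chi_g\hltimes e,\quad
e_g^*\mapsto \chi_{g\omega(e)}\hltimes e^*.
\]
Note $\chi_g\hltimes v$ has homogeneous component $v$ of degree $1$, so it is a bona fide element; and since $|e^*|=\omega(e)^{-1}$, the element $\chi_{g\omega(e)}\hltimes e^*$ multiplies on the left correctly. (Equivalently, $\Phi(e_g^*) = \Phi(e_g)^*$ under the involution formula $(\chi_g\hltimes b)^* = \chi_{g|b|}\hltimes b^*$, which is the natural guess.)

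The first step is to verify that $\Phi$ is well defined, i.e.\ that the images satisfy the Leavitt path algebra relations of $\ucov{E}$. Using the multiplication rule $(\chi_g\hltimes b)(\chi_h\hltimes c)=\delta_{h,g|b|}\,\chi_g\hltimes bc$ this is a direct check: for instance $\Phi(e_g)\Phi(e_g^*) = (\chi_g\hltimes e)(\chi_{g\omega(e)}\hltimes e^*) = \chi_g\hltimes ee^*$ since $g\omega(e)=g|e|$, and $ee^* $ is supported on the range/source vertices as required; similarly $\Phi(s(e_g))\Phi(e_g)=\Phi(v_g)\Phi(e_g)=(\chi_g\hltimes v)(\chi_g\hltimes e)=\chi_g\hltimes ve=\chi_g\hltimes e$ (since $|v|=1$), and one checks $\Phi(e_g)\Phi(r(e_g)) = (\chi_g\hltimes e)(\chi_{g\omega(e)}\hltimes r(e)) = \chi_g\hltimes e\, r(e) = \chi_g\hltimes e$. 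The Cuntz–Krieger relations at a regular vertex $w_h$ of $\ucov{E}$, namely $w_h = \sum_{s(e_g)=w_h} e_g e_g^*$, translate under $\Phi$ into $\chi_h\hltimes w = \sum_{s(e)=w}\chi_h\hltimes ee^*$, which is exactly the image under $\chi_h\hltimes-$ of the Cuntz–Krieger relation at $w$ in $L_\omega(E)$; one must observe that the edges $e_g$ with source $w_h$ are precisely the $e_h$ with $s(e)=w$, so the index sets match. The relation $e_g^* f_{g'}=\delta$-type orthogonality follows the same way. By the universal property of the Leavitt path algebra this gives a well-defined $\ell$-algebra homomorphism $\Phi$; that it is $G$-equivariant is immediate from $h\cdot(\chi_g\hltimes b)=\chi_{hg}\hltimes b$, and that it is a $*$-homomorphism follows from the involution formulas, since $\Phi(v_g)^* = (\chi_g\hltimes v)^* = \chi_g\hltimes v = \Phi(v_g)$ and $\Phi(e_g)^* = \chi_{g\omega(e)}\hltimes e^* = \Phi(e_g^*)$.

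The second step is to exhibit an inverse. Every element of $G\hltimes L_\omega(E)$ is an $\ell$-combination of $\chi_g\hltimes m$ with $m$ a homogeneous element of $L_\omega(E)$, and homogeneous elements of $L_\omega(E)$ are spanned by monomials $\mu\nu^*$ in edges and ghost edges; one tracks the degree bookkeeping to see that $\chi_g\hltimes \mu\nu^*$ is $\Phi$ of the corresponding path in $\ucov{E}$ starting at the appropriate lift $g$ (the lift of the source of $\mu$), which gives surjectivity. For injectivity I would either use the graded uniqueness theorem for Leavitt path algebras — $\Phi$ is a graded homomorphism for the natural $\ucov{E}^0$-grading on $L\ucov{E}$ and is nonzero on every vertex $v_g$, since $\Phi(v_g)=\chi_g\hltimes v\neq 0$ — or directly construct the inverse on the spanning set $\{\chi_g\hltimes \mu\nu^*\}$ and check it is well-defined and two-sided inverse to $\Phi$. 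The main obstacle, and the part requiring the most care, is precisely this degree-matching bookkeeping: making sure the index $g$ on $\chi_g$ is consistently the weight of the correct endpoint of the monomial, so that the range relations, the ghost-edge involution, and the Cuntz–Krieger sums all line up. Once the correspondence between paths in $\ucov{E}$ and elements $\chi_g\hltimes(\text{path in }E)$ is set up carefully this is routine, and naturality in $(E,\omega)$ is clear since $\Phi$ is defined on generators in a manner compatible with morphisms of weighted graphs.
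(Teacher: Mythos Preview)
Your argument is correct: the map $\Phi$ you write down is the standard isomorphism, and your verification of the Leavitt relations, $G$-equivariance, and $*$-compatibility is sound. Surjectivity via lifts of paths works exactly as you indicate, and injectivity follows either from the explicit inverse or from the graded uniqueness theorem applied to the canonical $\Z$-grading on $L(\ucov{E})$ (your phrase ``$\ucov{E}^0$-grading'' is not standard---what you need is the $\Z$-grading by path length, for which $\Phi$ is homogeneous because $\chi_g\hltimes e$ and $\chi_{g\omega(e)}\hltimes e^*$ land in $\Z$-degrees $1$ and $-1$ of $G\hltimes L_\omega(E)$; over a general base ring $\ell$ the direct inverse construction is the safer route).

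The paper does not do any of this: it simply cites Preusser's \cite{preuhyper}*{Proposition 74} (via Example 7 there), which proves the isomorphism in the broader setting of hypergraph Leavitt algebras. Your approach is more elementary and entirely self-contained, requiring only the universal property of $L(\ucov{E})$ and the explicit multiplication and involution formulas for $G\hltimes(-)$ already recorded in Subsection~\ref{subsec:cropro}; the paper's approach trades this computation for an appeal to a more general structural result in the literature. Either is fine---your version has the advantage that the reader can see exactly where each generator goes, which is useful later when one needs to track, e.g., how vertices $v_g$ correspond to projections $\chi_g\hltimes v$ (as in Lemma~\ref{lem:iso-v-vg}).
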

\begin{proof}
By  \cite{preuhyper}*{Example 7}, this is a particular case of \cite{preuhyper}*{Proposition 74}.
\end{proof}

\section{Graded \topdf{$K$}{K}-theory and crossed products}\label{sec:cross}
\numberwithin{equation}{subsection}
\subsection{Graded \topdf{$A$}{A} modules vs. \topdf{$G\hltimes A$}{GxA}-modules}\label{subsec:gramodglamod}
A $G$-graded ring $A$ has \emph{graded local units} if for every finite subset $\cF\subset A$ there exists a homogeneous idempotent $e$ such that $\cF\subset eAe$.

\begin{rmk}
Recall that the set $\cE$ of homogeneous idempotents of a $G$-graded ring $A$
can be equipped with a partial order; concretely, 
we set $e \leq f$ whenever $ef = fe=e$. If $A$ has graded local units, then for any cofinal subset $\cU\subset \cE$ we have $A = \colim_{\cE \ni e} eAe=\colim_{\cU\ni e}eAe$ as $G$-graded rings. We call any such $\cU$ a set of \emph{graded local units} of $A$. 
\end{rmk}

In \cite{steinberg}*{Proposition 2.5} Ara, Hazrat, Li and Sims show that if $A$ is a $G$-graded ring that 
has graded local units, then there is an isomorphism between the categories of unital left 
$G$-graded $A$-modules and  of unital left modules over 
the crossed product ring $A\#G$ of \cite{steinberg}*{Definition 2.1}, which by Remark \ref{rem:smash=crossed} is isomorphic to $(G \hltimes A^{\op})^{\op}$. 
Restating their result 
in terms of $G \hltimes A$, we obtain inverse isomorphisms of categories of unital right modules
\begin{equation}\label{mor:mod-cat-iso}
\Psi \colon  G_{\gr}-\cat{mod}_A \overset{\sim}{\longleftrightarrow} \cat{mod}_{G\hltimes A} \colon \Phi
\end{equation}
which we proceed to describe. 
The functor $\Psi$ sends a $G$-graded module $M$ to its underlying abelian group equipped with 
the $(G \hltimes A)$-action given by $m(\chi_g \hltimes a) = m_g a$ for each $g \in G$, $a \in A$, and $m \in M$.
For a homogeneous homomorphism $f$, $\Psi(f)$ is set to be same function $f$; 
a direct computation shows that the latter is $G \hltimes A$-linear. 
Fix a set $\cU$ of graded local units for $A$.
We define the module $\Phi(N)$ for a $G \hltimes A$-module $N$ 
as the same abelian group equipped with the grading
$\Phi(N)_g = \sum_{u \in \cU} N(\chi_g \hltimes u)$ for each $g \in G$. 
The equality $\Phi(N) = \bigoplus_{g \in G} \Phi(N)_g$ follows 
from the fact that $N$ is unital. The $A$-module action is given by 
$ma = m(\chi_g \hltimes a)$ for each $m \in \Phi(N)_g$, $a \in A$.
If $f:M\to N$ is a morphism of $G\hltimes A$-modules, we set $\Phi(f)(m_g) = f(m)_g$.
The categories $ G_{\gr}\cat{-mod}_A$ and $\cat{mod}_{G\hltimes A}$ are equipped with right actions of $G$ which we now describe. For $M\in  G_{\gr}\cat{-mod}_A$ and $g\in G$, let $M[g]$ be the same $A$-module with the following grading
\begin{equation}\label{eq:right-shift}
M[g]_h=M_{gh}.
\end{equation}
Likewise, for $N\in\cat{mod}_{G\hltimes A}$, let $N\cdot g$ be the same abelian group $N$ with right multiplication $\cdot_g$ defined as follows
\begin{equation}\label{eq:cdot-g}
x\cdot_g(\chi_s\hltimes a)=x\cdot(\chi_{gs}\hltimes a).
\end{equation}
It is straightforward to check that $\Psi$ and $\Phi$ intertwine these actions.

By definition
both compositions of $\Psi$ and $\Phi$ yield the respective identity functors.
Moreover, both functors are exact; in particular, 
they preserve projective objects.

\begin{prop} \label{prop:fingen} Let $A$ be a $G$-graded ring with graded local units. Then 
the functors $\Psi$ and $\Phi$ of \eqref{mor:mod-cat-iso} 
send finitely generated modules to finitely generated modules.
\end{prop}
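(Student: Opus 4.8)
The plan is to prove the two statements separately, beginning with $\Psi$ and then treating $\Phi$. For $\Psi$, let $M$ be a finitely generated graded $A$-module, and let $m^{(1)},\dots,m^{(n)}$ be generators which, after decomposing into homogeneous components, we may assume to be homogeneous, say of degrees $g_1,\dots,g_n$. I would first observe that since $A$ has graded local units, there is a single homogeneous idempotent $e\in A$ (of degree $1$) such that $m^{(i)}e=m^{(i)}$ for all $i$: indeed the finitely many elements $m^{(i)}a_{ij}$ occurring in expressions of the generators lie in $\cF A$ for a finite set $\cF$, so one picks $e$ with $\cF\subset eAe$ and enlarges if necessary. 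Then in $\Psi(M)$ we have $m^{(i)}=m^{(i)}_{g_i}=m^{(i)}(\chi_{g_i}\hltimes e)$, and I claim the same elements $m^{(1)},\dots,m^{(n)}$ generate $\Psi(M)$ over $G\hltimes A$. The point is that for any $x\in M$ homogeneous of degree $h$ we can write $x=\sum_i m^{(i)}a_i$ with $a_i\in A$; projecting onto the degree-$h$ component and using that $(m^{(i)}a_i)_h=m^{(i)}_{g_i}(a_i)_{g_i^{-1}h}=m^{(i)}(\chi_{g_i}\hltimes (a_i)_{g_i^{-1}h})$ under $\Psi$, we express $x$ as a $G\hltimes A$-combination of the $m^{(i)}$.

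For $\Phi$, the argument runs in the opposite direction and uses the explicit description of the $G\hltimes A$-action on $\Psi(\Phi(N))=N$. Let $N$ be a finitely generated $G\hltimes A$-module with generators $x^{(1)},\dots,x^{(n)}$. Since $N$ is unital over $G\hltimes A=\ell^{(G)}\otimes A$ and the $\chi_g\hltimes u$ with $g\in G$, $u\in\cU$ form (up to the multiplication rule) an approximate unit, each $x^{(i)}$ is a finite sum $x^{(i)}=\sum_{g\in S_i}x^{(i)}(\chi_g\hltimes u_g)$ over a finite subset $S_i\subset G$ and suitable $u_g\in\cU$; thus the finitely many homogeneous elements $x^{(i)}_g:=x^{(i)}(\chi_g\hltimes u)\in\Phi(N)_g$ (as $g$ ranges over $\bigcup_i S_i$, for a common $u\in\cU$ dominating all the $u_g$) form a finite generating set for $N$ as a $G\hltimes A$-module, hence a fortiori the $x^{(i)}$ do, and it remains to check these same elements generate $\Phi(N)$ as a graded $A$-module. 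Given homogeneous $m\in\Phi(N)_h$, write $m=\sum_i x^{(i)}\alpha_i$ with $\alpha_i\in G\hltimes A$; expanding $\alpha_i=\sum_{g}\chi_g\hltimes a_{i,g}$ and using the action formula $x^{(i)}(\chi_g\hltimes a)=(x^{(i)}(\chi_g\hltimes u))a$ whenever $u$ is a graded local unit with $ua=a$, together with the fact that $x^{(i)}(\chi_g\hltimes a)$ lies in $\Phi(N)_g$, I extract the degree-$h$ contributions and write $m$ as an $A$-combination of the finitely many $x^{(i)}(\chi_g\hltimes u)$.

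The main obstacle I anticipate is purely bookkeeping: ensuring that a \emph{single} graded local idempotent $e$ (resp. a single $u\in\cU$) can be chosen to simultaneously fix all the generators and absorb all the coefficients $a_{i,g}$ that appear, so that the finitely many group elements indexing the relevant $\chi_g$ really do form a finite set. This is where one uses that $\cU$ is cofinal in the directed set $\cE$ of homogeneous idempotents (Remark after the definition of graded local units) and that $A=\colim_{\cU\ni e}eAe$, so any finite collection of elements and relations is already visible inside $eAe$ for some $e\in\cU$. Once that reduction is in place, the rest is the two symmetric translations of generators through the explicit formulas defining $\Psi$ and $\Phi$, together with the already-noted exactness of both functors (which guarantees, e.g., that quotients of finitely generated free modules go to finitely generated modules, giving an alternative packaging of the same argument).
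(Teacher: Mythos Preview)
Your direct argument is correct, and it is \emph{not} the route the paper takes: the paper simply cites Preusser \cite{preuhyper}*{Proposition 66} and gives no argument of its own. What you have written is essentially a self-contained unpacking of that citation. A couple of small comments on the execution: for $\Psi$ you do not actually need a single idempotent $e$ fixing all the $m^{(i)}$ simultaneously---each $m^{(i)}$ being fixed by some $e_i$ already suffices, since $m^{(i)}(\chi_{g_i}\hltimes a)=m^{(i)}a$ regardless. For $\Phi$, your worry about choosing a common $u\in\cU$ is also avoidable: once you fix, for each $x^{(i)}$, a local unit $\chi_{F_i}\hltimes e_i$ with $x^{(i)}(\chi_{F_i}\hltimes e_i)=x^{(i)}$, the identity $(\chi_{g'}\hltimes e_i)(\chi_g\hltimes a)=\delta_{g,g'}\chi_g\hltimes e_ia$ shows that $x^{(i)}(\chi_g\hltimes a)$ vanishes unless $g\in F_i$, and in that case equals $y^{(i,g)}\cdot(e_i a)$ for $y^{(i,g)}:=x^{(i)}(\chi_g\hltimes e_i)\in\Phi(N)_g$; so the finite set $\{y^{(i,g)}:g\in F_i\}$ generates with no need to enlarge $e_i$ after the fact. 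Your closing remark about repackaging via exactness and cyclic modules $eA[g]$ is also valid and is perhaps the cleanest way to present it.
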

\begin{proof} This is shown in the course of the proof of \cite{preuhyper}*{Proposition 66}.
\end{proof}

\begin{rmk}\label{rem:unitalize} Recall that the \emph{unitalization} of a ring $A$ is the abelian group $\widetilde{A}_\Z = A \oplus \Z$ together with the multiplication rule $(a,k) \cdot (b,l) := (ab+al+bk,kl)$. If $A$ is $G$-graded, then so is $\tilde{A}_\Z$, with homogeneous components $(\tilde{A}_\Z)_1=A_1\oplus\Z$ and $(\tilde{A}_\Z)_g=A_g$ for $g\ne 1$.  When $A$ is unital,  $\widetilde{A}_\Z \to A \times \Z$, $(a,n)\mapsto (a+n\cdot 1,n)$ is a $G$-graded isomorphism. 

Let $\cat{Ring}$ be the category of rings and ring homomorphisms and $\cat{Ring}_1\subset\cat{Ring}$ the subcategory of unital rings and unit preserving homomorphisms. A functor $F \colon G_{\gr}-\cat{Ring}_1 \to \cat{Ab}$ is \emph{additive} if for each pair of 
$G$-graded $R,S \in \cat{Ring}_1$ the canonical map $F(R \times S) \to F(R) \oplus F(S)$ is an isomorphism. 
For such an $F$, the functor $\hat F \colon G_{\gr}-\cat{Ring} \to \cat{Ab}$, $\hat F(A) := \ker(F(\widetilde{A}_\Z) \xto{F(\pi_A)} F(\Z))$ extends $F$ up to natural isomorphism.

Since both unitalization and kernels preserve filtering colimits, if $F$ preserves filtering colimits then 
so does $\hat F$. Thus, given a $G$-graded ring $A$ with a set of (graded) local units $\cU$ 
we have $\hat F(A) \cong \colim_{e \in \cU} \hat F(eAe) \cong \colim_{u \in \cU} F(eAe)$. 
In particular all of this applies to $F(A)=K_*^{G_{\gr}}(A)$ which is defined for unital $A$ as the $K$-theory of the split-exact category $\cat{Proj}_{G_{\gr}}(A)$ of finitely generated projective $G$-graded $A$-modules, and we have 
\begin{equation}\label{eq:kgrlocu}
K_*^{G_{\gr}}(A)=K_*(\cat{Proj}_{G_{\gr}}(A))
\end{equation}
whenever $A$ has graded local units.
\end{rmk}

\begin{thm}\label{thm:k-cross=k-gr} Let $A$ be a $G$-graded ring with graded local units. Then 
the functor $\Psi$ above induces an isomorphism of $\Z$-graded $G$-modules $K_*^{G_{\gr}}(A)\iso K_*(G\hltimes A)$.
\end{thm}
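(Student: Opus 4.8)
The plan is to deduce the statement from the category isomorphism $\Psi$ of \eqref{mor:mod-cat-iso} by checking that it restricts to an equivalence between the split-exact categories that compute the two $K$-theories in play, and that it is compatible with the $G$-actions. First I would reduce to the case in which $A$ is unital: by Proposition \ref{prop:impri} and the colimit description \eqref{eq:kgrlocu}, together with the observation that $G\hltimes-$ commutes with the filtering colimit $A=\colim_{e\in\cU}eAe$ (which holds since $G\hltimes-$ is given levelwise by $\ell^{(G)}\otimes-$, and tensoring commutes with colimits), both sides of the asserted isomorphism are filtering colimits over $\cU$ of the corresponding groups for the unital rings $eAe$ and $G\hltimes eAe=e(G\hltimes A)e$; a set of graded local units for $A$ yields a set of (idempotent, hence local) units for $G\hltimes A$, so the functoriality of $\Psi$ in $A$ reduces everything to the unital case. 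In that case $K_*^{G_{\gr}}(A)=K_*(\cat{Proj}_{G_{\gr}}(A))$ by definition and $K_*(G\hltimes A)=K_*(\cat{Proj}(G\hltimes A))$ by \eqref{eq:kgrlocu} applied to the idempotented ring $G\hltimes A$ (which has graded, in fact ordinary, local units even when $A$ is unital, since $G\hltimes A$ is non-unital as soon as $G$ is infinite).

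Next I would assemble the ingredients already recorded in the excerpt: $\Psi$ and $\Phi$ are mutually inverse, both are exact, and hence both preserve projectivity and split exactness; by Proposition \ref{prop:fingen} they also preserve finite generation. Therefore $\Psi$ restricts to an equivalence of split-exact categories $\cat{Proj}_{G_{\gr}}(A)\xrightarrow{\sim}\cat{Proj}(G\hltimes A)$, and applying the $K$-theory functor gives an isomorphism $K_*^{G_{\gr}}(A)\iso K_*(G\hltimes A)$ of $\Z$-graded abelian groups, natural in $A$. It remains to upgrade this to an isomorphism of $\Z$-graded $G$-\emph{modules}. The right $G$-action on $K_*^{G_{\gr}}(A)$ is induced by the shift functors $M\mapsto M[g]$ of \eqref{eq:right-shift}, and that on $K_*(G\hltimes A)$ by the twist functors $N\mapsto N\cdot g$ of \eqref{eq:cdot-g}; the excerpt already states that $\Psi$ and $\Phi$ intertwine these actions, i.e. $\Psi(M[g])=\Psi(M)\cdot g$ naturally in $M$. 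Since $\Psi$ restricts to the subcategories of finitely generated projectives, the induced $K$-theory isomorphism is $G$-equivariant, which is exactly the claim.

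The only step requiring genuine care — and the one I expect to be the main obstacle — is the compatibility with the $G$-actions at the level of $K$-theory, i.e. verifying that the shift functors $[g]$ on $\cat{Proj}_{G_{\gr}}(A)$ and the twist functors $\cdot g$ on $\cat{Proj}(G\hltimes A)$ are exact endofunctors that genuinely induce the stated $G$-module structures (associativity and unitality of the action, functoriality in $g$), and that the intertwining identity $\Psi\circ[g]=(\cdot g)\circ\Psi$ is compatible with the short exact sequences defining the $K$-groups. This is where one must be precise about whether the action is left or right and whether one works with $G$ or $G^{\op}$; the bookkeeping in \eqref{eq:right-shift}, \eqref{eq:cdot-g} was set up so that it goes through, but spelling it out is the substance of the proof. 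Everything else is a routine transport along the equivalence $\Psi$.
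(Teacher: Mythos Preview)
Your argument is correct and matches the paper's approach: both rest on the category isomorphism $\Psi$, Proposition~\ref{prop:fingen}, and the identification \eqref{eq:kgrlocu} of $K$-theory with the $K$-theory of finitely generated projectives for rings with local units, together with the already-recorded fact that $\Psi$ intertwines the shift and twist actions. The paper is slightly more direct, skipping your reduction to unital $A$ and instead simply observing that $\{\chi_F\hltimes e : F\subset G\text{ finite},\ e\in\cU\}$ is a set of local units for $G\hltimes A$; note also that your parenthetical $G\hltimes eAe=e(G\hltimes A)e$ is ill-formed since $e\notin G\hltimes A$, though nothing in your argument actually depends on it.
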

\begin{proof} Let $\cU\subset A$ be a set of graded local units and let $\cF(G)$ be the set of all finite subsets of $G$. Then $\{\chi_F\hltimes e: F\in\cF(G),\, e\in\cU\}$ is a set of local units of $G\hltimes A$. Hence the theorem follows from Proposition \ref{prop:fingen} and Remark \ref{rem:unitalize}.
\end{proof}

\begin{rmk} Let $R$ be a ring with graded local units. Recall from 
Subsection \ref{subsec:cropro} that $G \hltimes R$
is a $G$-$\ast$-algebra; write $\alpha_g \colon G \hltimes R \to G \hltimes R$, $\alpha_g(\chi_s\hltimes a)=\chi_{gs}\hltimes a$,
for the algebra automorphism associated with left multiplication by $g \in G$. The functor 
$\beta_g$ that sends a unital right $G\hltimes R$-module $N$ to  $N \cdot g$  as defined in
\eqref{eq:cdot-g} --which corresponds to shift grading \eqref{eq:right-shift} under the 
equivalence \eqref{mor:mod-cat-iso}--
is naturally isomorphic to scalar extension along $\alpha_{g^{-1}}$. Thus the left and 
right $\Z[G]$-module structures induced by the $\alpha_g$ and the $\beta_g$ on 
$K_*(G\hltimes R)$ correspond to each other under the canonical involution $g\mapsto 
g^{-1}$.
\end{rmk}

\subsection{Duality and hermitian \topdf{$K$}{K}-theory}\label{subsec:dual}
Let $A$ be a $G$-graded $\ast$-ring. If $M$ is a unital $G$-graded right $A$-module, 
its \emph{hermitian dual} is the right module
\[
M^\ast = \{f \in \hom_\Z(M,A) : f(xa) = a^\ast f(x)
\ (\forall a \in A, x\in M)\}.
\] 
For each $d\in G$, consider the $\Z$-submodule
\[
M^\ast_d := \{f \in M^\ast : f(M_g) \subset A_{g^{-1}d} \ (\forall g \in G)\}    
\]
The sum $\sum_{d\in G}M^\ast_d$ is always direct, but the inclusion $\bigoplus_{d\in G}M^\ast_d\subset M^{\ast}$ may be strict. Assume that $A$ has graded local units; we say that a $G$-graded $A$-module $M$ is \emph{finitely presented} if there exist a homogeneous idempotent $e\in A$, $n,m\ge 1$, $g_1,\dots,g_n,h_1,\dots,h_m\in G$ and an exact sequence of homogeneous homomorphisms
\[
\bigoplus_{i=1}^neA[g_i]\to \bigoplus_{i=1}^meA[h_i]\to M\to 0. 
\]

If $M$ is finitely presented, then we have the equality
\[
M^\ast=\bigoplus_{d\in G}M^\ast_d.
\]
Hence $M^\ast$ is a $G$-graded module in this case. 

\begin{lem} \label{lem:psi-*=*-psi}
Let $A$ be a $G$-graded $\ast$-ring with self-adjoint graded local units and $M$ a finitely presented graded unital right $A$-module. There is a natural isomorphism $\eta_M \colon \Psi(M^\ast) \iso \Psi(M)^\ast$.
\end{lem}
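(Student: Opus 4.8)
The plan is to unwind both sides of the claimed isomorphism $\eta_M\colon\Psi(M^\ast)\iso\Psi(M)^\ast$ in terms of the explicit formulas for $\Psi$, $\Phi$, and the hermitian dual, and then exhibit a natural map and check it is bijective. First I would recall that $\Psi(M)$ is the underlying abelian group of $M$ with $(G\hltimes A)$-action $m\cdot(\chi_g\hltimes a)=m_g a$, so that $\Psi(M)^\ast$ consists of additive maps $\varphi\colon M\to G\hltimes A$ satisfying $\varphi(m\cdot(\chi_g\hltimes a))=(\chi_g\hltimes a)^\ast\varphi(m)=(\chi_{g|a|}\hltimes a^\ast)\varphi(m)$ for homogeneous $a$. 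On the other side, $M^\ast$ is the graded right $A$-module of additive $f\colon M\to A$ with $f(xa)=a^\ast f(x)$, and $\Psi(M^\ast)$ is its underlying group with the $(G\hltimes A)$-action via the grading $M^\ast=\bigoplus_d M^\ast_d$ (available by the hypothesis that $M$ is finitely presented).

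Next I would write down the candidate for $\eta_M$. Given $f\in M^\ast$, I would send it to the map $\widehat f\colon M\to G\hltimes A$ defined on a homogeneous $m\in M_g$ by $\widehat f(m)=\chi_?\hltimes f(m)$ — the subtle point being which group element labels the characteristic function, and how the grading degree of $f$ (i.e. which $M^\ast_d$ it lies in) enters. The natural guess, dictated by the identification $\chi_g\hltimes b=\elmat_{g,g|b|}\otimes b$, is to send $f\in M^\ast_d$ to the map that on $m\in M_g$ returns $\chi_{g^{-1}}\hltimes f(m)$ (noting $|f(m)|=g^{-1}d$, so $\chi_{g^{-1}}\hltimes f(m)$ is supported at $\elmat_{g^{-1},d}$), extended additively over the decomposition $M=\bigoplus_g M_g$ and then summed over $d$. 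I would then verify three things in turn: (a) $\widehat f$ really lands in $\Psi(M)^\ast$, i.e. satisfies the twisted $A$-linearity above — this is a direct computation using $f(xa)=a^\ast f(x)$, the multiplication rule $(\chi_s\hltimes b)(\chi_t\hltimes c)=\delta_{t,s|b|}\chi_s\hltimes bc$, and the involution formula $(\chi_s\hltimes c)^\ast=\chi_{s|c|}\hltimes c^\ast$; (b) $f\mapsto\widehat f$ is additive and bijective, with inverse sending $\varphi\in\Psi(M)^\ast$ to the map $m\mapsto (\text{the } A\text{-component of }\varphi(m))$, where one checks the support condition forces $\varphi(m)$ for $m$ homogeneous to be a single $\chi$-term so this is well defined and the recovered map is genuinely $A$-antilinear; (c) naturality in $M$, which is immediate from the formulas once one checks functoriality conventions, i.e. that $\Psi$ of a homogeneous map is the same underlying map and dualization is contravariant on both sides consistently.

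The main obstacle I expect is bookkeeping with the grading shifts and the precise group-element label on the $\chi$'s: one must be careful that the hermitian dual $M^\ast$ has been given the "correct" grading convention (the one making $M^\ast_d$ pair $M_g$ into $A_{g^{-1}d}$), and that under $\Psi$ the resulting $(G\hltimes A)$-module structure on $M^\ast$ matches, term by term, the twisted-linearity condition defining $\Psi(M)^\ast$. A secondary technical point is that the identity $M^\ast=\bigoplus_d M^\ast_d$ — needed for $M^\ast$ to even be a graded module, hence for $\Psi(M^\ast)$ to make sense — is only guaranteed for finitely presented $M$, which is exactly why that hypothesis appears; I would invoke the stated fact about finitely presented modules rather than reprove it. Once the correct labels are pinned down, checking bijectivity reduces to the observation that for homogeneous $m$ the support conditions on elements of $\Psi(M)^\ast$ force $\varphi(m)$ to be concentrated in a single matrix coordinate, so passing between $\varphi$ and its $A$-valued "diagonal data" is manifestly inverse; and self-adjointness of the graded local units is what ensures the duality functor is well behaved (e.g. that $M^\ast$ is again unital) so that both sides are honest objects in $\cat{mod}_{G\hltimes A}$.
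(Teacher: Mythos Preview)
Your approach is exactly the paper's: write down an explicit map $\eta_M=(-)^\sharp$ and its inverse $(-)^\flat$ in terms of homogeneous components, and check $(G\hltimes A)$-antilinearity and bijectivity directly. The only slip is the index on the characteristic function: the correct formula (and the one the paper uses) sends $f\in M^\ast$ to $\widehat f(x)=\sum_{g}\chi_g\hltimes f(x_g)$, i.e.\ $\chi_g$ rather than $\chi_{g^{-1}}$ for $m\in M_g$; with your guess the antilinearity check in step~(a) fails, whereas with $\chi_g$ it goes through, and your observation that $\varphi(m)$ is concentrated at a single coordinate then reads $\varphi(m)\in\chi_g\hltimes A$ for $m\in M_g$, giving the inverse $\beta\mapsto\beta^\flat$ with $\beta^\flat(y)=\sum_{g,h}\pi_{g,h}(\beta(y))$ as in the paper.
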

\begin{proof} As in Subsection \ref{subsec:cropro}, we regard $G\hltimes A$ as $*$-subalgebra of $M_GA$; we write $\pi_{g,h}(x)\in A$ for the $(g,h)$-entry of $x\in G\hltimes A$.
For $\alpha \in \Psi(M^\ast)$ and $\beta \in \Psi(M)^\ast$, set
\[
\alpha^\sharp (x) = \sum_{g,h \in G} \chi_g \ltimes \alpha_h(x_g), 
\quad \beta^\flat(y) = \sum_{g,h \in G} \pi_{g,h}(\beta(y))   
\]
One checks that $\alpha^\sharp \in \Psi(M)^\ast$ and $\beta^\flat \in \Psi(M^\ast)$ and that
the map $(-)^\sharp \colon \Psi(M^\ast) \iso \Psi(M)^\ast$ is a $G \hltimes A$-linear 
isomorphism with inverse $(-)^\flat$.
\end{proof}

Let $A$ be a $G$-graded $*$-ring with graded local units. If $M$ is a unital, finitely generated and projective $G$-graded right $A$-module, then the canonical map 
\begin{equation}\label{map:candualdual}
\can:M\to M^{\ast\ast}, \quad \can(x)(f) = f(x)^\ast
\end{equation}is a homogeneous isomorphism. Hence the triple $(\cat{Proj}_{G_{\gr}}(A),\ast,\can)$ with the split-exact 
sequences as conflations, is an exact category with duality 
in the sense of \cite{marcoherm}*{Definition 2.1}. If $A$ is unital, we write 
\begin{equation}\label{khgr}
K_*^{h,G_{\gr}}(A)=GW_*(\cat{Proj}_{G_{\gr}}(A),\ast,\can)
\end{equation}
for its Grothendieck-Witt groups. We extend $K_*^{h,G_{\gr}}$ to all, not
necessarily unital $G$-graded $\ast$-rings as in Remark \ref{rem:unitalize}.
If $A$ is a $G$-graded $\ast$-ring with self-adjoint graded units, then it
can be written as a filtering colimit of unital $G$-graded rings with
involution and so its $G$-graded hermitian $K$-theory groups coincide with
the Grothendieck-Witt groups of $(\cat{Proj}_{G_{\gr}}(A),\ast,\can)$.
Similarly, the hermitian $K$-theory groups of $G\hltimes A$ are the
Grothendieck-Witt groups of the split exact category with duality
$(\cat{Proj}(G\hltimes A),\ast,\can')$. Here $\can'$ is the natural
transformation between a $G\hltimes A$-module and its double hermitian dual. 

\begin{lem} \label{lem:psi-form-fun} Let $A$ be a $G$-graded
$\ast$-ring with self-adjoint graded local units. Let $\can$ and $\can'$ be as above. Then for each unital $G$-graded $A$-module $M$ the following diagram is commutative.
\begin{equation*}
\begin{tikzcd}
 \Psi(M) \arrow{r}{\can'_{\Psi(M)}} \arrow{d}[left]{\Psi(\can_M)} & \Psi(M)^{\ast\ast} \arrow{d}{\eta^\ast_{M}} \\
 \Psi(M^{\ast\ast}) \arrow{r}[below]{\eta_{M^\ast}} & \Psi(M^{\ast})^{\ast}
\end{tikzcd}
\end{equation*}\
In other words, $(\Psi, \eta)$ is a form functor $(\cat{Proj}_{G_{\gr}}(A),\ast,\can)\to (\cat{Proj}(G\hltimes A),\ast,\can')$ in the sense of \cite{marcoherm}*{Definition 3.2}.
\end{lem}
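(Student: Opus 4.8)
The plan is to check commutativity of the square by a direct element chase. Take $M\in\cat{Proj}_{G_{\gr}}(A)$; then $M$ and its hermitian dual $M^\ast$ are finitely presented, so $\eta_M$ and $\eta_{M^\ast}$ of Lemma \ref{lem:psi-*=*-psi} are defined, and $\can_M$ is a homogeneous isomorphism by the discussion preceding \eqref{khgr}. Recall that $\Psi$ acts as the identity on underlying abelian groups and on morphisms, so $\Psi(\can_M)=\can_M$ as a map of sets; and unwinding the definition of the hermitian dual of a morphism of $G\hltimes A$-modules, $\eta_M^\ast\colon\Psi(M)^{\ast\ast}\to\Psi(M^\ast)^\ast$ is precomposition with $\eta_M$. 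Evaluating the two composites of the square first at $m\in\Psi(M)=M$ and then at $\alpha\in\Psi(M^\ast)=M^\ast$, and using in addition the formula $\can'_N(x)(f)=f(x)^\ast$ for the double-dual map of a $G\hltimes A$-module and the fact that $\eta_{M^\ast}$ is the map $(-)^\sharp$ of Lemma \ref{lem:psi-*=*-psi} attached to the module $M^\ast$, one reduces the whole statement to the single identity
\[
\alpha^\sharp(m)^\ast=\bigl(\can_M(m)\bigr)^\sharp(\alpha)
\]
in $G\hltimes A$, with $\alpha^\sharp\in\Psi(M)^\ast$ as in the proof of Lemma \ref{lem:psi-*=*-psi}.

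To prove this identity I would compare both sides inside $M_GA$ via the embedding $G\hltimes A\subset M_GA$ of Subsection \ref{subsec:cropro}, matching entries. Writing $m=\sum_h m_h$ and $\alpha=\sum_h\alpha_h$ for the homogeneous decompositions (the latter available since $M$ is finitely presented), the formula for $(-)^\sharp$ from the proof of Lemma \ref{lem:psi-*=*-psi} together with the identity $\chi_g\hltimes b=\elmat_{g,g|b|}\otimes b$ show that the $(g,h)$-entry of $\alpha^\sharp(m)$ is $\alpha_h(m_g)\in A_{g^{-1}h}$; applying the involution $f^\ast(x,y)=f(y,x)^\ast$ of $M_GA$, the $(g,h)$-entry of $\alpha^\sharp(m)^\ast$ is $\alpha_g(m_h)^\ast$. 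On the other hand $\can_M$ is homogeneous of degree $1$, so $\can_M(m)=\sum_h\can_M(m_h)$ with $\can_M(m_h)$ its degree-$h$ piece, and $\can_M(m_h)(\alpha_g)=\alpha_g(m_h)^\ast$; feeding this into the sharp map attached to $M^\ast$ shows that the $(g,h)$-entry of $\bigl(\can_M(m)\bigr)^\sharp(\alpha)$ is likewise $\alpha_g(m_h)^\ast$. Hence the two matrices coincide, and the square commutes.

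I expect the only real difficulty to be bookkeeping rather than any conceptual obstruction: one must keep track of the three distinct stars in play (the hermitian dual over $A$, the hermitian dual over $G\hltimes A$, and the involutions of $A$ and of $M_GA$), and verify at each step that the homogeneous pieces land in exactly the degrees dictated by the embedding $G\hltimes A\subset M_GA$, using repeatedly that $\can_M$ and each $\can_M(m_h)$ are homogeneous of degree $1$. No input beyond Lemma \ref{lem:psi-*=*-psi} and the explicit formulas for $\can$ and $\can'$ is needed.
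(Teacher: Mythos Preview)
Your proof is correct and follows essentially the same route as the paper: a direct element chase using the explicit formula for $(-)^\sharp$ from Lemma~\ref{lem:psi-*=*-psi} and the definition of $\can$. The only cosmetic difference is that the paper restricts to homogeneous $m\in M_s$ and $\alpha\in M^\ast_t$ at the outset and computes both sides directly as the single element $\chi_t\hltimes\alpha(m)^\ast$ of $G\hltimes A$, whereas you keep arbitrary $m$ and $\alpha$, pass to their homogeneous decompositions, and match $(g,h)$-entries inside $M_GA$; this is the same verification indexed differently.
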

\begin{proof} It suffices to see that given an element $m \in \Psi(M)$ which 
is homogeneous as an element of $M$, the maps $(\eta^\ast_M\can'_{\Psi(M)})(m)$ and $(\eta_{M^\ast}\Psi(\can))(m)$ coincide when evaluating them at each element $\alpha \in \Psi(M^\ast)$ which is homogeneous as an element of $M^\ast$. 
Indeed, given $m \in M_s$ and $\alpha \in M^\ast_t$ we have
\[
(\eta^\ast_M\can'_{\Psi(M)})(m)(\alpha) = \can'_{\Psi(M)}(m)\eta_M(\alpha) = (\alpha^\sharp(m))^\ast
= (\chi_s \hltimes \alpha(m))^\ast = \chi_t \hltimes \alpha(m)^\ast 
\]
and 
\[
(\eta_{M^\ast}\Psi(\can))(m)(\alpha) = \Psi(\can)(m)^\sharp (\alpha) =
\chi_t \hltimes \Psi(\can)(m)(\alpha) = \chi_t \hltimes \alpha(m)^\ast.
\]
\end{proof}

\begin{rmk}\label{rmk:form-fun-stric} 
Let $\cat A$ and $\cat B$ be exact categories with duality. An exact form functor 
$(F,\varphi) \colon \cat A \to \cat B$ is nonsingular if $\varphi$ is an isomorphism, in which case it induces a homomorphism between the 
Grothendieck-Witt groups of $\cat A$ and $\cat B$ \cite{marcoherm}*{Section 3.1}. If $F$ is moreover 
an isomorphism, then by definition of
form functor composition \cite{marcoherm}*{Definition 3.2} 
the (strict) inverse $F^{-1}$ can be equipped with 
a non-singular form functor structure  in such a way that
$F$ and $F^{-1}$ induce inverse isomorphisms on Grothendieck-Witt
groups  as 
defined in \cite{marcoherm}*{Definition 4.12}.  
\end{rmk}

\begin{thm}\label{thm:kh-cross=kh-gr} Let $A$ be a $G$-graded $\ast$-ring with self-adjoint graded local units. Then
the morphisms \eqref{mor:mod-cat-iso} induce an isomorphism of graded $\Z[G]$-modules
\begin{equation}\label{map:isokh}
K_*^{h,G_{\gr}}(A) \iso K^h_*(G \hltimes A).
\end{equation}
\end{thm}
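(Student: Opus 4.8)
The plan is to mirror the proof of Theorem \ref{thm:k-cross=k-gr}, now carrying along duality structures. The hard work has essentially been done already: Lemma \ref{lem:psi-*=*-psi} produces the natural isomorphism $\eta_M\colon\Psi(M^\ast)\iso\Psi(M)^\ast$ identifying $\Psi$ with the dual functors, and Lemma \ref{lem:psi-form-fun} checks the compatibility of $\eta$ with the canonical double-dual isomorphisms $\can$ and $\can'$, so that $(\Psi,\eta)$ is a form functor $(\cat{Proj}_{G_{\gr}}(A),\ast,\can)\to(\cat{Proj}(G\hltimes A),\ast,\can')$ in the sense of \cite{marcoherm}*{Definition 3.2}. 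What remains is to assemble these into the asserted isomorphism of Grothendieck-Witt groups.

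First I would reduce to the unital case. By the last paragraph before the statement, when $A$ has self-adjoint graded local units it is a filtering colimit of unital $G$-graded $\ast$-rings, and both $K_*^{h,G_{\gr}}(A)$ and $K^h_*(G\hltimes A)$ are computed as the Grothendieck-Witt groups of the respective split-exact categories with duality; moreover Grothendieck-Witt commutes with the relevant filtering colimits just as $K$-theory does in Remark \ref{rem:unitalize}. So it suffices to treat $A$ unital, where $\cat{Proj}_{G_{\gr}}(A)$ consists of finitely generated projective modules, all of which are finitely presented, so $\eta$ is defined on all of $\cat{Proj}_{G_{\gr}}(A)$. Next I would observe that $\Psi$ is an exact isomorphism of the underlying exact categories (it is exact with exact inverse $\Phi$, and preserves finite generation and projectivity by Proposition \ref{prop:fingen}), and that $\eta$ is an isomorphism; hence $(\Psi,\eta)$ is a nonsingular exact form functor which is moreover an isomorphism of categories. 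By Remark \ref{rmk:form-fun-stric}, $\Psi$ and its strict inverse $\Phi$ (suitably equipped) then induce mutually inverse isomorphisms $GW_*(\cat{Proj}_{G_{\gr}}(A),\ast,\can)\iso GW_*(\cat{Proj}(G\hltimes A),\ast,\can')$, which by \eqref{khgr} is exactly \eqref{map:isokh}.

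Finally I would verify $\Z[G]$-linearity: the right $G$-actions on the two module categories are implemented by the shift $M\mapsto M[g]$ and by $N\mapsto N\cdot g$ of \eqref{eq:cdot-g}, and it was already noted in Subsection \ref{subsec:gramodglamod} that $\Psi$ and $\Phi$ intertwine these, so the induced map on $GW_*$ is $G$-equivariant; combined with the grading by the shift on $K$-theory this gives the isomorphism of graded $\Z[G]$-modules. The main obstacle is essentially bookkeeping rather than mathematical: one must be careful that the form-functor structure on the inverse $\Phi$ coming from Remark \ref{rmk:form-fun-stric} is compatible with the $G$-action, i.e.\ that the equivariance of $\Psi$ upgrades to equivariance of the pair $(\Psi,\eta)$ as a form functor; this follows because the shift functors are themselves (strict) form functors and $\eta$ is natural, but it should be spelled out. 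No genuinely new computation beyond Lemmas \ref{lem:psi-*=*-psi} and \ref{lem:psi-form-fun} is needed.
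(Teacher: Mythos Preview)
Your proposal is correct and follows essentially the same approach as the paper: a colimit reduction to identify both sides with Grothendieck--Witt groups of the respective projective-module categories with duality, followed by invoking Proposition \ref{prop:fingen}, Lemmas \ref{lem:psi-*=*-psi} and \ref{lem:psi-form-fun}, and Remark \ref{rmk:form-fun-stric} to conclude that $(\Psi,\eta)$ induces the desired isomorphism. Your write-up is in fact more detailed than the paper's (which is quite terse), particularly in spelling out the $\Z[G]$-equivariance via the intertwining of shift actions.
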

\begin{proof}
A colimit argument similar to that used to show \eqref{eq:kgrlocu} proves that the left and right hand side of \eqref{map:isokh} are the Grothendieck-Witt groups of the exact categories with duality $(\cat{Proj}_{G_{\gr}}(A),\ast,\can)$ and $(\cat{Proj}(G\hltimes A),\ast,\can')$. The functor $\Psi$ of \eqref{mor:mod-cat-iso} induces an isomorphism between these Grothendieck-Witt groups, by Proposition \ref{prop:fingen}, Lemmas \ref{lem:psi-*=*-psi} and \ref{lem:psi-form-fun} and Remark \ref{rmk:form-fun-stric}.
\end{proof}
\begin{coro}\label{coro:k-cross=k-gr}
Let $A$ be a $G$-graded $\ast$-ring and let $\tilde{A}_\Z$ be its unitalization. 
Then for all $n\in\Z$ we have
\[
K_n^{h,G_{\gr}}(A)=\ker(K^h_n(G\hltimes\tilde{A}_\Z)\to K^h_n(\Z^{(G)})).
\]
If furthermore $n\le 0$, then  $K_n^{h,G_{\gr}}(A)=K^h_n(G\hltimes A)$.  
\end{coro}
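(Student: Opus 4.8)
The plan is to unfold the definition of $K^{h,G_{\gr}}_*$ on non-unital rings and reduce everything to Theorem \ref{thm:kh-cross=kh-gr}. By the construction of Remark \ref{rem:unitalize}, the group $K_n^{h,G_{\gr}}(A)$ is, by definition, $\ker\bigl(K_n^{h,G_{\gr}}(\widetilde{A}_\Z)\xto{K_n^{h,G_{\gr}}(\pi_A)}K_n^{h,G_{\gr}}(\Z)\bigr)$, where $\Z$ carries the trivial grading and $\pi_A\colon\widetilde{A}_\Z\to\Z$ is the canonical unit-preserving projection. Now $\widetilde{A}_\Z$ and $\Z$ are unital, hence have self-adjoint graded local units, so Theorem \ref{thm:kh-cross=kh-gr} gives isomorphisms $K_n^{h,G_{\gr}}(\widetilde{A}_\Z)\cong K^h_n(G\hltimes\widetilde{A}_\Z)$ and $K_n^{h,G_{\gr}}(\Z)\cong K^h_n(G\hltimes\Z)$. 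These are natural with respect to $\pi_A$ — a routine check, since on unital rings the isomorphism is induced by the module equivalence $\Psi$ of \eqref{mor:mod-cat-iso}, which is compatible with scalar extension along unit-preserving homogeneous $\ast$-homomorphisms — and, since $\Z$ is trivially graded, $G\hltimes\Z\cong\Z^{(G)}$ as $\ast$-rings. Passing to kernels gives the first identity, valid for every $n\in\Z$.

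For the second identity I would apply the exact functor $G\hltimes(-)$ to the split exact sequence of $G$-graded $\ast$-rings $0\to A\to\widetilde{A}_\Z\xto{\pi_A}\Z\to 0$; this produces a split exact sequence of $\ast$-rings
\[
0\to G\hltimes A\to G\hltimes\widetilde{A}_\Z\xto{G\hltimes\pi_A}\Z^{(G)}\to 0
\]
whose middle and right terms have self-adjoint local units $\{\chi_F\hltimes 1 : F\in\cF(G)\}$. It then suffices to show that, for $n\le 0$, the relative group $\ker\bigl(K^h_n(G\hltimes\widetilde{A}_\Z)\to K^h_n(\Z^{(G)})\bigr)$ coincides with $K^h_n(G\hltimes A)$, where the latter denotes the non-unital hermitian $K$-theory $\ker\bigl(K^h_n(\widetilde{(G\hltimes A)}_\Z)\to K^h_n(\Z)\bigr)$. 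This is where the hypothesis $n\le 0$ is used: in non-positive degrees the relative hermitian $K$-group of a split extension depends only on the ideal (for $n<0$ this is the excision property of negative hermitian $K$-theory, and for $n=0$ it is the split-exactness of the Grothendieck-Witt group $K^h_0$), so the ambient ring $G\hltimes\widetilde{A}_\Z$ may be exchanged for the $\Z$-unitalization $\widetilde{(G\hltimes A)}_\Z$ without changing the relative group. When $G$ is infinite these rings are not unital, so I would first cut down by the idempotents $\chi_F\hltimes 1$, apply the excision statement to the unital split extensions $(\chi_F\hltimes 1)(G\hltimes\widetilde{A}_\Z)(\chi_F\hltimes 1)\twoheadrightarrow\Z^F$, and then pass to the filtered colimit over $F\in\cF(G)$, using that $K^h_n$ commutes with filtered colimits and that $G\hltimes A=\colim_F(\chi_F\hltimes 1)(G\hltimes A)(\chi_F\hltimes 1)$.

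The naturality assertion in the first paragraph and the colimit bookkeeping in the second are routine. The substantive input — and the step I expect to require the most care — is the excision and split-exactness of $K^h_n$ in degrees $n\le 0$; this is precisely what forces the restriction on $n$, since for $n>0$ the Grothendieck-Witt groups of a non-unital ring are not recovered by such a relative group, and one must instead pass to the homotopy-invariant theory $KH^h$, as in Corollary \ref{coro:kkhgr-ell=khcross}. I would import this excision property from the treatment of $K^h$ in \cite{kkh}.
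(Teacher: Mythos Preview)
Your proposal is correct and follows essentially the same route as the paper: unfold the non-unital definition via Remark \ref{rem:unitalize}, apply Theorem \ref{thm:kh-cross=kh-gr} to the unital rings $\widetilde{A}_\Z$ and $\Z$ for the first identity, and invoke excision of hermitian $K$-theory in non-positive degrees for the second. The paper's proof is a terse two sentences; your version spells out the naturality and the filtered-colimit reduction to unital corners, but these are exactly the routine verifications the paper leaves implicit.
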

\begin{proof} The general formula follows from Theorem \ref{thm:kh-cross=kh-gr} and the fact that 

$$K_n^{h,G_{\gr}}(A)=\ker(K_n^{h,G_{\gr}}(\tilde{A}_\Z)\to K_n^{h,G_{\gr}}(\Z)).$$

The second assertion is a consequence of the first and of the fact that
hermitian $K$-theory satisfies excision in non-positive dimensions. 
\end{proof}

\subsection{Homotopy hermitian graded \topdf{$K$}{K}-theory}\label{subsec:khhgr}
Let $A$ be a $G$-graded $*$-algebra, $PA=\ker(\ev_0:A[t]\to A)$, 
$\Omega A=\ker(\ev_1:PA\to A)$. By Corollary \ref{coro:k-cross=k-gr},  
hermitian graded $K$-theory satisfies excision in nonpositive degrees. In 
particular the path extension 
\[
0\to \Omega A\to PA\overset{\ev_1}{\lra} A\to 0
\]
gives rise to a connecting map 
$K_{n}^{h,G_{\gr}}(A)\to K_{n-1}^{h,G_{\gr}}(\Omega A)$ for each $n \leq 0$. Set 
\begin{equation}\label{def:khgr}
KH^{h,G_{\gr}}_n(A)=\colim_{r \ge 0} K_{-r}^{h,G_{\gr}}(\Omega^{n+r}A).
\end{equation}
It follows from Corollary \ref{coro:k-cross=k-gr} and the definition of $KH^h$ \cite{kkh}*{Section 3} that 
\begin{equation}\label{eq:khgr}
KH^{h,G_{\gr}}_n(A)=KH_n^h(G\hltimes A).
\end{equation}
for every $G$-graded $\ast$-ring $A$ and every $n\in\Z$.

\subsection{Free involutions}\label{subsec:gwinv}
If $R$ is a $G$-graded ring, the ring $\inv(R) =R\oplus R^{\op}$ equipped with the involution $(a,b)^*=(b,a)$ has a compatible $G$-grading, with $\inv(R)_g= R_g\oplus R^{\op}_g = R_g\oplus R_{g^{-1}}$. There is an isomorphism $G \hltimes \inv(R) \iso \inv (G\hltimes R)$, 
$\chi_g \hltimes (x \otimes r)  \mapsto x\otimes (\chi_g \hltimes r)$. 

\begin{prop} \label{prop:kgr=khgr-inv}
Let $R$ be a $G$-graded unital ring. There is a natural isomorphism
$K^{G_{\gr}}_\ast(R) \cong K^{h,G_{\gr}}_\ast(\inv(R))$.
\end{prop}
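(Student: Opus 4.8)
The statement is the analogue, for hermitian graded $K$-theory, of the classical fact that $K$-theory of a ring $R$ agrees with the Grothendieck--Witt theory of the hyperbolic $\ast$-ring $\inv(R) = R \oplus R^{\op}$. The natural strategy is to identify the category with duality attached to the right-hand side of the desired isomorphism with the split-exact category $\cat{Proj}_{G_{\gr}}(R)$ whose $K$-theory computes the left-hand side. Concretely, I would produce an equivalence of exact categories
\[
\cat{Proj}_{G_{\gr}}(R) \;\overset{\sim}{\longrightarrow}\; \cat{Proj}_{G_{\gr}}(\inv(R))
\]
and check that it is an equivalence \emph{of categories with duality} when the target carries its hyperbolic duality; passing to Grothendieck--Witt groups then yields the claim, using the extension of $K^{h,G_{\gr}}_\ast$ and $K^{G_{\gr}}_\ast$ to all (not necessarily unital) $G$-graded rings via Remark~\ref{rem:unitalize} when needed, though for the unital case at hand we may argue directly.

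\textbf{Key steps.} First I would recall that a $G$-graded right $\inv(R)$-module is the same datum as a pair $(P,Q)$ of $G$-graded right $R$-modules (with $Q$ regarded as a left $R$-module via $R^{\op}$, equivalently a right $R$-module after applying $(-)^{\op}$), and that $(P,Q)$ is finitely generated projective over $\inv(R)$ iff both $P$ and $Q$ are finitely generated projective over $R$. Under this identification the hermitian dual $(P,Q)^\ast$ over $\inv(R)$ is naturally isomorphic to $(Q^{\vee}, P^{\vee})$, where $(-)^{\vee} = \hom_{\mathrm{gr-}R}(-,R)$ is the ordinary (non-hermitian) graded $R$-linear dual; the involution on $\inv(R)$ swapping the two factors is exactly what interchanges the roles of $P$ and $Q$. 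Consequently the forgetful/hyperbolic pair
\[
\cat{Proj}_{G_{\gr}}(R) \longrightarrow \cat{Proj}_{G_{\gr}}(\inv(R)), \qquad P \longmapsto (P, P^{\vee}),
\]
together with the forgetful functor in the other direction $(P,Q)\mapsto P$, are mutually inverse equivalences; the canonical double-dual identification $\can'$ on $\inv(R)$-modules corresponds under $P\mapsto (P,P^{\vee})$ to the canonical isomorphism $P \cong P^{\vee\vee}$ for finitely generated projective graded modules, so the equivalence is compatible with the dualities and sends conflations to conflations. By the cofinality/additivity and compatibility of Grothendieck--Witt with equivalences of categories with duality (as in \cite{marcoherm}), this induces the natural isomorphism $K^{h,G_{\gr}}_\ast(\inv(R)) \cong GW_\ast(\cat{Proj}_{G_{\gr}}(R), \vee, \can_{\mathrm{hyp}})$, and the latter is $K^{G_{\gr}}_\ast(R)$ because the Grothendieck--Witt theory of a split-exact category equipped with its \emph{hyperbolic} duality is canonically its $K$-theory (the hyperbolic functor $K \to GW$ is an equivalence in that situation). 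Naturality in $R$ (for degree-$1$ $\ast$-homomorphisms on the target, i.e. graded ring homomorphisms on $R$) is clear from the functoriality of all constructions involved.

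\textbf{Main obstacle.} The routine part is bookkeeping with gradings and the $(-)^{\op}$ flips; the genuinely load-bearing input is the identification $GW(\mathcal{E}, \mathrm{hyperbolic\ duality}) \simeq K(\mathcal{E})$ for a split-exact category $\mathcal{E}$, which I would take from the standard theory of Grothendieck--Witt groups (it is the Witt-theoretic shadow of the fact that the hyperbolic category of $\mathcal{E}$ recovers $\mathcal{E}$). The subtle point to get exactly right is that the category with duality on $\inv(R)$-modules arising from the \emph{hermitian} dual $M^\ast$ defined in Subsection~\ref{subsec:dual} really is the hyperbolic duality on pairs $(P,Q)$ under the above identification — here one must use that the involution of $\inv(R)$ is $(a,b)^\ast = (b,a)$, which contributes no sign and simply transposes the two summands, so that $(P,Q)^\ast \cong (Q^\vee, P^\vee)$ with no twist. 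Once this compatibility is checked, everything else is formal.
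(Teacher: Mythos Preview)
Your overall strategy is sound and in fact matches the paper's: identify the hermitian duality on $\cat{Proj}_{G_{\gr}}(\inv(R))$ with a hyperbolic-type duality and then invoke the standard fact that the Grothendieck--Witt groups of a hyperbolic category recover ordinary $K$-theory (this is \cite{marcoherm}*{Proposition 4.7}, which the paper cites explicitly). Your identification of the dual on pairs as $(P,Q)^\ast \cong (Q^\vee,P^\vee)$ is also correct and is exactly what the paper records.

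However, there is a genuine error in the middle of your argument. The functors
\[
P \longmapsto (P,P^\vee) \qquad\text{and}\qquad (P,Q)\longmapsto P
\]
are \emph{not} mutually inverse equivalences between $\cat{Proj}_{G_{\gr}}(R)$ and $\cat{Proj}_{G_{\gr}}(\inv(R))$: composing them on a general pair $(P,Q)$ yields $(P,P^\vee)$, which need not be isomorphic to $(P,Q)$. Indeed $\cat{Proj}_{G_{\gr}}(\inv(R))\simeq \cat{Proj}_{G_{\gr}}(R)\times\cat{Proj}_{G_{\gr}}(R^{\op})$ is simply a larger category than $\cat{Proj}_{G_{\gr}}(R)$, so no such equivalence can exist. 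What you really want---and what the paper does---is an equivalence of \emph{categories with duality} between $\bigl(\cat{Proj}_{G_{\gr}}(R)\times\cat{Proj}_{G_{\gr}}(R^{\op}),\,\dagger,\,\ev\times\ev\bigr)$ and Schlichting's hyperbolic category $H(\cat{Proj}_{G_{\gr}}(R))$, whose underlying category is $\cat{Proj}_{G_{\gr}}(R)\times\cat{Proj}_{G_{\gr}}(R)^{\op}$ with the swap duality. The paper builds this via the form functor $\id\times(-)^\vee$ on the second factor, together with explicit compatibility isomorphisms $\varphi,\psi$ and checks that the composites are form-isomorphic to identities. Only then does \cite{marcoherm}*{Proposition 4.7} apply to give $GW\cong K$. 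Your sketch conflates the hyperbolic \emph{functor} (which is not an equivalence of categories) with an equivalence; fixing this step amounts to redoing precisely the comparison with $H(\cat{Proj}_{G_{\gr}}(R))$ that the paper carries out.
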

\begin{proof} Via the isomorphism 
$\cat{Proj}_{G_{\gr}}(\inv(R)) \cong \cat{Proj}_{G_{\gr}}(R) 
\times \cat{Proj}_{G_{\gr}}(R^{\op})$,
the duality functor of $\cat{Proj}_{G_{\gr}}(\inv(R))$
can be naturally identified with the endofunctor of 
$\cat{Proj}_{G_{gr}}(R) \times \cat{Proj}_{G_{gr}}(R^{\op})$
given by $(P,Q)^\dagger = (Q^\vee, P^\vee)$ where $(-)^\vee$ 
denotes the non-hermitian dual. Writing $\ev_M \colon M \to M^{\vee \vee}$, 
$\ev_M(x)(f) = f(x)$, 
the natural transformation 
$\can \colon \cat{Proj}_{G_{\gr}}(\inv(R)) \to \cat{Proj}_{G_{\gr}}(\inv(R))$
is identified with $\ev \times \ev$.

In view of Remark \ref{rmk:form-fun-stric} and 
\cite{marcoherm}*{Proposition 4.7}, it suffices to see that
the Grothendieck-Witt groups 
of $(\cat{Proj}_{G_{\gr}}(R) \times \cat{Proj}_{G_{\gr}}(R^{\op}), \dagger, \ev \times \ev)$
coincide with those of the the hyperbolic category $H(\cat{Proj}_{G_{\gr}(R)})$ of 
\cite{marcoherm}*{Section 3.5}. The latter consists of the category
$\cat{Proj}_{G_{\gr}}(R) \times \cat{Proj}_{G_{\gr}}(R)^{\op}$ together with 
the duality functor  $(P,Q)^\ast = (Q,P)$ 
and the identity natural transformation $\cat{id} : 1 \Rightarrow \ast\ast$.
We now consider the inverse equivalences 
\[
    F := \cat{id} \times (-)^\vee \colon
    \cat{Proj}_{G_{gr}}(R) \oplus \cat{Proj}_{G_{gr}}(R^{\op}) \longleftrightarrow
    \cat{Proj}_{G_{gr}}(R) \oplus \cat{Proj}_{G_{gr}}(R)^{\op} 
    \colon \cat{id} \times (-)^\vee =: G. 
\]
We promote these inverse equivalence to non-singular form functors by means of the natural
transformations
\[
\varphi \colon F\circ \dagger \Rightarrow \ast\circ F, 
\quad \varphi_{(P,Q)} = (1_{Q^\vee}, \ev_P)
\colon (Q^\vee, P^{\vee\vee}) \to (Q^\vee,P)
\]
and
\[
\psi \colon G\circ \ast \Rightarrow \dagger\circ G, 
\quad \psi_{(P,Q)} = (\ev_Q,1_{P^\vee})
\colon (Q,P^\vee) \to (Q^{\vee\vee},P^\vee).
\]
Let $\star$ be as in \cite{marcoherm}*{bottom of page 113}; the form functor compositions $(F \circ G, \varphi \star \psi)$ and $(G \circ F, \psi \star \varphi)$
yield
\begin{align*}
F \circ G &= \id \times (-)^{\vee\vee}, \quad (\varphi \star \psi)_{(P,Q)} = (\ev_Q, \ev_P),\\
G \circ F &= \id \times (-)^{\vee\vee}, \quad (\psi \star \varphi)_{(P,Q)} = (\ev_{Q^\vee}, {\ev_P}^\vee).
\end{align*}
One checks that
$\zeta \colon F\circ G \Rightarrow \id$, $\zeta_{(P,Q)} = (1_P,\ev_Q)$
and $\xi \colon \id \Rightarrow G \circ F$, $\xi_{(P,Q)} = (1_P,\ev_Q)$
are natural isomorphisms of form functors as defined in \cite{marcoherm}*{Section 3.3}.
This implies that the morphisms induced by $F$ and $G$ at the level of 
Grothendieck-Witt groups are mutually inverse; see \cite{marcoMV}*{Section 2.8, Lemma 2} and \cite{marcoMV}*{Section 2.10, Proposition 2}.
\end{proof}

\begin{rmk} By Proposition \ref{prop:kgr=khgr-inv}, the isomorphism
$K_*^{G_{\gr}}(A) \cong K_*(G\hltimes A)$ of Theorem \ref{thm:k-cross=k-gr}
can be recovered from Theorem \ref{thm:kh-cross=kh-gr} applied to $\inv(R)$.
\end{rmk}

\section{Hermitian Dade theorem}\label{sec:hdade}
\numberwithin{equation}{section}
Recall that a $G$-graded ring 
$R$ is \emph{strongly graded} if for every $g,h\in G$,
we have $R_g\cdot R_h=R_{gh}$. Dade proves in \cite{dade}*{Theorem 2.8} that 
a unital ring $R$ is strongly graded if and only if the functors 
\begin{equation}\label{fun:dade}
    (-)_1 \colon   G_{\gr}-\cat{mod}_R \to \cat{mod}_{R_1} \qquad \text{and} \qquad 
    - \otimes_{R_1} R \colon \cat{mod}_{R_1} \to  G_{\gr}-\cat{mod}_R 
\end{equation}
are mutually inverse category equivalences. 

The categories above, equipped with the natural transformation $\can_M \colon M \to M^{\ast\ast}$ are 
categories with duality as defined in \cite{marcoherm}*{Definition 3.1}. 
Similarly, the functors \eqref{fun:dade} equipped with the following transformations are form functors in the sense of \cite{marcoherm}*{Definition 3.2}.
\begin{align}\label{nat:dadeform}
&\varphi_M \colon (M^*)_1 \mapsto (M_1)^\ast, 
\quad \varphi_M(f) = f|_{M_1}^{R_1};\\\nonumber
&\psi_N \colon N^\ast \otimes_{R_1} R \to (N \otimes_{R_1} R)^\ast, 
\quad \psi_N(f \otimes r)(n \otimes s) = s^\ast f(m)  r. 
\end{align}
With these definitions in place, we have 
the following hermitian version of Dade's theorem.

\begin{thm}\label{thm:h-dade}
Let $R$ be a unital $G$-graded $\ast$-ring. The following are equivalent:
\begin{itemize}
    \item[(i)] The ring $R$ is strongly graded. 
    \item[(ii)] The form functors given by \eqref{fun:dade} and 
    \eqref{nat:dadeform} are mutual inverse equivalences of 
    categories with duality.  
\end{itemize}
\end{thm}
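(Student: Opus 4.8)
I will argue the two implications separately; the content is in $(\mathrm{i})\Rightarrow(\mathrm{ii})$.

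For $(\mathrm{ii})\Rightarrow(\mathrm{i})$ there is nothing to prove beyond forgetting structure: if the form functors given by \eqref{fun:dade} and \eqref{nat:dadeform} are mutually inverse equivalences of categories with duality, then in particular the underlying functors $(-)_1$ and $-\otimes_{R_1}R$ are mutually inverse equivalences of the underlying module categories, so $R$ is strongly graded by Dade's theorem \cite{dade}*{Theorem 2.8}.

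For $(\mathrm{i})\Rightarrow(\mathrm{ii})$, assume $R$ is strongly graded. By \cite{dade}*{Theorem 2.8} the functors \eqref{fun:dade} are already mutually inverse equivalences of the underlying categories, so the task is to promote this to an equivalence of \emph{categories with duality}, which I would do in three steps. \emph{Step 1:} check that $\varphi$ and $\psi$ satisfy the coherence condition of \cite{marcoherm}*{Definition 3.2}, i.e. that for each $M$ (resp. $N$) the square relating $\varphi_M$ (resp. $\psi_N$) to the canonical maps $\can$ on source and target commutes; for $(-)_1$ this is the one-line check that, chasing $m\in M_1$ and pairing against a homogeneous $f\in(M^\ast)_1$, both legs of the square produce $f(m)^\ast\in R_1$, using only the formula $\can_M(x)(f)=f(x)^\ast$ of \eqref{map:candualdual} and that a degree-$1$ functional sends $M_1$ into $R_1$; for $\psi$ it is the same computation on elementary tensors. \emph{Step 2:} show $\varphi$ and $\psi$ are natural isomorphisms. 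Strong gradedness enters here through the identity $M=M_1R$ and, more precisely, through the fact (part of Dade's equivalence) that the multiplication map $\mu_M\colon M_1\otimes_{R_1}R\iso M$ is an isomorphism of graded modules: injectivity of $\varphi_M\colon(M^\ast)_1\to(M_1)^\ast$ holds because an $f\in(M^\ast)_1$ vanishing on $M_1$ vanishes on $M_1R=M$ by conjugate $R$-linearity, and surjectivity holds by sending $g\in(M_1)^\ast$ to the functional $m\otimes r\mapsto r^\ast g(m)$ on $M_1\otimes_{R_1}R$, which is well defined, conjugate $R$-linear and homogeneous of degree $1$, and transporting it along $\mu_M$; since $(-)_1$ is an equivalence it reflects isomorphisms, so $\psi$ being an isomorphism follows from a naturality square identifying $(\psi_N)_1$ with a composite of $\varphi$ and the structural isomorphisms of Dade's equivalence. \emph{Step 3:} verify that the structural isomorphisms $\mu_M$ above and their counterparts $(N\otimes_{R_1}R)_1\iso N$ are isomorphisms of form functors, in the sense of \cite{marcoherm}*{Section 3.3}, between the composite form functors $(-\otimes_{R_1}R,\psi)\circ((-)_1,\varphi)$ and $((-)_1,\varphi)\circ(-\otimes_{R_1}R,\psi)$ and the respective identity form functors; concretely one checks, for each $M$, that the square relating $\mu_{M^\ast}$, the composite form structure $\psi_{M_1}\circ(\varphi_M\otimes_{R_1}R)$ and the hermitian dual of $\mu_M$ commutes, which after writing a general degree-$d$ element of $M^\ast$ as a finite sum $\sum_i f_i\cdot r_i$ with $f_i\in(M^\ast)_1$ and $r_i\in R_d$ reduces to the associativity identity $s^\ast(f(m)r)=(s^\ast f(m))r$; the analogous check over $R_1$ is easier. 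Steps 1--3 together yield $(\mathrm{ii})$.

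The step I expect to be the actual obstacle is Step 3: keeping straight the composite form-functor structure and the directions of the various hermitian-dualized maps while matching everything against the identity through Dade's structural isomorphisms. Steps 1 and 2 are genuinely routine once the correct diagrams are drawn, and an alternative to the second half of Step 2 is to deduce that $\psi$ is an isomorphism from $\varphi$ being one via the naturality square mentioned there.
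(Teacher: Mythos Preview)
Your proposal is correct and follows essentially the same route as the paper. The paper's proof is terser: it invokes the non-hermitian Dade theorem for the underlying equivalence (giving both directions at once), asserts the form-functor property of $\varphi$ and $\psi$ before the theorem statement rather than inside the proof, and then compresses your Steps~2 and~3 into the single sentence that the natural isomorphisms $(N\otimes_{R_1}R)_1\cong N$ and $M_1\otimes_{R_1}R\cong M$ from the classical proof are ``in fact natural isomorphisms of form functors''; your breakdown into Steps~1--3 is exactly the content hidden in that phrase, and your identification of Step~3 as the bookkeeping-heavy part is accurate.
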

\begin{proof} By the non-hermitian version of the theorem the functors $(-)_1$ and $- \otimes_{R_1} R$ are 
mutual inverses if and only $R$ is strongly graded; see e.g. 
\cite{hazbook}*{proof of Theorem 1.5.1}. A straightforward computation 
shows that when $R$ is strongly graded the natural isomorphisms 
$(N \otimes_{R_1} R)_1 \cong N$ and $M_1 \otimes_{R_1} R \cong M$ given
in loc. cit. are in fact natural isomorphisms of form functors. 
\end{proof}

\begin{rmk} \label{rmk:dade-h-cross} 
Let $R$ be as in Theorem \ref{thm:h-dade}. The composite
$\Psi \circ (-\otimes_{R_1} R) 
\colon \cat{Proj}(R_1) \to \cat{Proj}(G \hltimes R)$ 
is naturally isomorphic to the functor 
induced by the ring morphism 
$r \in R_1 \mapsto \chi_1 \hltimes r \in G \hltimes R$
by means of the natural isomorphism
\[
\eta_M \colon M \otimes_{R_1} R \to M \otimes_{R_1}( G \hltimes R), 
\qquad m \otimes r \mapsto m \otimes \chi_1 \hltimes r.
\]
\end{rmk}

\begin{ex}\label{ex:groupalg}
Let $R,S\in G_{\gr}-\ahas$; if $R$ is either trivially graded or $G$ is abelian, then by Remark \ref{rmk:tensor-gr}, $R\otimes S$ is again $G$-graded. If moreover $R$ and $S$ are unital, and $S$ is strongly graded, then so is $R\otimes S$. In particular this applies to $S=\ell[G]$, so $R[G]$ is strongly graded. Moreover, $R[G]_1=\bigoplus_{g\in G}R_g\otimes g^{-1}\cong R$, so by Theorem \ref{thm:h-dade}, we have
\[
K^{h,G_{\gr}}_*(R[G])=K^h_*(R). 
\]
\end{ex}

\section{Graded \topdf{$K$}{K}-theory of Leavitt path algebras}\label{sec:kleav}

Let $E$ be a (directed) graph and $L(E)$ its Leavitt path algebra over $\ell$ (\cite{lpabook}). In this section we show that if $E$ is row-finite, then
its Bowen-Franks $\Z[\sigma]$-module \eqref{eq:bfgr}
together with the (hermitian) $K$-theory of $\ell$ completely 
characterize the graded $K$-theory of $L(E)$.
We write 
\[
\sink(E) = \{v \in F^0 : s^{-1}(v) = \emptyset\},
\]
for the sets of sinks of a graph $E$. Observe that if $E$ is row-finite, we have $E^0=\reg(E)\sqcup\sink(E)$.

Recall that an exact sequence of abelian groups is \emph{pure exact} if it remains exact upon tensoring with any abelian group.
\begin{lem} \label{lem:mononotepi} Let $E$ be a row-finite graph. Then
\item[i)] The sequence of abelian groups
\begin{center}
    \begin{tikzcd}
        0 \arrow{r} & \Z[\sigma]^{(\reg(E))} \arrow[above]{r}{I-\sigma A_E^t} &
        \Z[\sigma]^{(E^0)} \arrow{r} & \gBF(E) \arrow{r} & 0
    \end{tikzcd}
\end{center}
is pure exact.
\item[ii)] If $E$ is finite, then $\gBF(E)\ne 0$.
\end{lem}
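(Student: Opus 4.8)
The plan is to prove the two assertions separately.

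\smallskip

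\noindent\emph{Part (i).} Write the sequence as $0\to P\xrightarrow{\ \phi\ }Q\to\gBF(E)\to 0$ with $P=\Z[\sigma]^{(\reg(E))}$, $Q=\Z[\sigma]^{(E^0)}$ and $\phi=I-\sigma A_E^t$. By definition \eqref{eq:bfgr} we have $\gBF(E)=\coker\phi$, so exactness of the sequence reduces to the injectivity of $\phi$; and since for every abelian group $T$ the last two thirds of the tensored sequence are automatically exact by right-exactness of $-\otimes_\Z T$, pure exactness reduces further to the injectivity of $\phi\otimes_\Z\mathrm{id}_T$ for every $T$. As $-\otimes_\Z T$ commutes with filtered colimits and distributes over finite direct sums, and every abelian group is a filtered colimit of finite direct sums of copies of $\Z$ and of cyclic groups $\Z/n$, it suffices to show that the base change $\phi_R$ of $\phi$ along any of the ring maps $\Z\to\Z$, $\Z\to\Z/n$ is injective; I will in fact prove this for $\phi_R$ the base change of $\phi$ to an arbitrary commutative ring $R$. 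Here I use that $E$ row-finite gives $E^0=\reg(E)\sqcup\sink(E)$ and that each regular vertex emits only finitely many edges, so the $\reg(E)\times\reg(E)$ submatrix $N$ of $A_E^t$ is column-finite and acts on $R[\sigma,\sigma^{-1}]^{(\reg(E))}$; projecting $\phi_R$ onto the $\reg(E)$-block of its target yields $\mathrm{id}-\sigma N$. A lowest-degree argument then finishes: an $x\neq 0$ in $\ker\phi_R$ would satisfy $x=\sigma N x$, and comparing the smallest power of $\sigma$ occurring on each side — that of $\sigma N x$ being strictly larger than that of $x$ — is absurd. Hence $\phi_R$ is injective for every $R$, which gives both the injectivity of $\phi$ and the pure exactness of the sequence.

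\smallskip

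\noindent\emph{Part (ii).} Assume $E$ finite (hence row-finite) and, discarding the trivial case, that $E^0\neq\emptyset$; suppose for contradiction that $\gBF(E)=0$. Then $\phi$ is surjective, and by (i) it is also injective, hence an isomorphism of finitely generated free $\Z[\sigma]$-modules; comparing ranks forces $|\reg(E)|=|E^0|$, so $\reg(E)=E^0$, whence $\sink(E)=\emptyset$ and $I$ is the identity matrix. Consequently $\det\phi$ is a unit of $\Z[\sigma]=\Z[\sigma,\sigma^{-1}]$, hence of the form $\pm\sigma^k$; but $\det\phi=\det(\mathrm{id}-\sigma A_E^t)$ is an ordinary polynomial in $\sigma$ with constant term $\det(\mathrm{id})=1$, which forces $\det\phi=1$. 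Reading off coefficients, this says the characteristic polynomial of $A_E$ is $t^{|E^0|}$, i.e. $A_E$ is nilpotent. On the other hand $\reg(E)=E^0$ means that every row sum of the nonnegative integer matrix $A_E$ is $\geq 1$, so for the all-ones vector $\mathbf 1$ we get $A_E\mathbf 1\geq \mathbf 1$ entrywise and, inductively using $A_E\geq 0$, $A_E^{k}\mathbf 1\geq \mathbf 1>0$ for all $k\geq 1$; this contradicts $A_E^{|E^0|}=0$. Therefore $\gBF(E)\neq 0$.

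\smallskip

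\noindent I expect essentially all of this to be routine; the one step calling for a little care is the determinant-plus-nilpotency argument in (ii), which rests on the facts that the only units of $\Z[\sigma,\sigma^{-1}]$ are the $\pm\sigma^k$, that $\det(\mathrm{id}-\sigma A_E^t)$ is a genuine polynomial with constant term $1$, and that a nonnegative integer matrix all of whose row sums are at least $1$ cannot be nilpotent.
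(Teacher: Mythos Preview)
Your proof is correct. For part~(ii) you follow essentially the same route as the paper: deduce from surjectivity and injectivity that $\reg(E)=E^0$, argue that $\det(I-\sigma A_E^t)$ is a unit in $\Z[\sigma,\sigma^{-1}]$ with constant term~$1$, hence equals~$1$, so $A_E$ is nilpotent, and then derive a contradiction with regularity. You spell out the last step (via the all-ones vector) where the paper simply says ``contradicting the regularity of~$E$''.

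For part~(i) your approach differs from the paper's. The paper exhibits the sequence as a filtered colimit over $n$ of the split short exact sequences
\[
0\to \bigoplus_{i=-n}^{n}\Z^{(\reg(E))}\sigma^i \xrightarrow{\,I-\sigma A_E^t\,} \bigoplus_{i=-n}^{n+1}\Z^{(E^0)}\sigma^i \to \Bigl(\bigoplus_{i=-n}^{n+1}\Z^{(\sink(E))}\sigma^i\Bigr)\oplus \Z^{(\reg(E))}\sigma^{n+1}\to 0,
\]
whose cokernels are free abelian, and invokes the fact that a filtered colimit of split exact sequences is pure exact. You instead reduce pure exactness to injectivity of $\phi\otimes_\Z\id_T$ for all $T$, and establish this by a direct lowest-degree argument after projecting onto the $\reg(E)$-block. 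Your route is more elementary and self-contained; the paper's is slightly more structural and has the side benefit that the same degree filtration (and its explicit cokernels) reappears in the proof of Theorem~\ref{thm:bf-k0=k0gr}. One small remark: the reduction to $T=\Z$ and $T=\Z/n$ via filtered colimits is unnecessary, since your lowest-degree argument already works verbatim with coefficients in an arbitrary abelian group $T$ (no ring structure on $T$ is used).
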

\begin{proof} 
The sequence of part i) is pure exact because it is the colimit over $n$ of 
the split-exact sequences
\[
    \begin{tikzcd}
        0 \to \bigoplus_{i=-n}^n\Z^{(\reg(E))}\sigma^i\arrow{r}{I-\sigma A_E^t} &
        \bigoplus_{i=-n}^{n+1}\Z^{(E^0)} \arrow{r} &
        \left(\bigoplus_{i=-n}^{n+1}\Z^{(\sink(E))}\sigma^i\right) \oplus 
        \Z^{(\reg(E))}\sigma^{n+1} \to 0.
    \end{tikzcd}        
\]
Next assume that $I-\sigma \cdot A_E^t$ is surjective and let $\chi_{A_E}(\sigma)=\det(\sigma I-A_E^t)\in\Z[\sigma]$ be the characteristic polynomial.
By rank considerations $E$ must be regular, and by what we have just proved, 
 $\det(I-\sigma A_E^t) = \sigma^{|E^0|}\cdot\chi_{A_E}(\sigma^{-1})$, must be invertible in $\Z[\sigma]$. It follows that $\chi_{A_E}(\sigma)$ is also invertible, and therefore it is a power of $\sigma$; by degree considerations we must have 
$\chi_{A_E}(\sigma)=\sigma^{|E^0|}$. In particular $A_E$ is nilpotent, contradicting 
the regularity of $E$. 
\end{proof}

In the next theorem we write $\ucov{E}$ for the covering associated to the constant function $\omega:E^1\to\Z$, $\omega(e)=1$ for all $e\in E^1$. 
\begin{thm}\label{thm:bf-k0=k0gr} 
Let $E$ be a row-finite graph and let $H\colon \ahas \to \cat{Ab}$ be an $M_{\infty}$-stable, additive functor that preserves filtering colimits. Then there is a $\Z[\sigma]$-module isomorphism
\[
H(L(\ucov{E}))\cong \gBF(E) \otimes_\Z H(\ell).
\]
\end{thm}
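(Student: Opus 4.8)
The plan is to exhibit both sides as the colimit of one and the same diagram of abelian groups.

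First I would reduce to the case $E$ finite. A row-finite graph is the filtered colimit of its finite complete subgraphs $E'$ (keep a vertex regular only once all of its finitely many edges, and their ranges, have been included, and declare all remaining vertices sinks); since $L(\ucov{(-)})$ and $\gBF$ commute with these colimits and $H$ preserves filtering colimits, it suffices to treat finite $E$. So assume $E^0 = \reg(E) \sqcup \sink(E)$ is finite. The covering $\ucov{E}$ carries a height function $v_g \mapsto g$ on vertices, every edge raising height by $1$; in particular $\ucov{E}$ is row-finite and acyclic. For $n \geq 0$ let $F_n \subseteq \ucov{E}$ be the finite complete subgraph on the vertices of height in $[-n,n]$, with all edges between them, so that $\ucov{E} = \colim_n F_n$ and hence $H(L(\ucov{E})) = \colim_n H(L(F_n))$.

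Since $F_n$ is finite and acyclic, $L(F_n) \cong \bigoplus_{w \in \sink(F_n)} M_{P_n(w)}(\ell)$ as $\ast$-algebras, where $P_n(w)$ is the finite set of paths of $F_n$ ending at $w$; here $\sink(F_n) = (\sink(E) \times [-n,n]) \sqcup (\reg(E) \times \{n\})$. By additivity and $M_\infty$-stability of $H$ we get $H(L(F_n)) \cong \bigoplus_{w \in \sink(F_n)} H(\ell)$, the summand at $w$ being $H$ of the corresponding block.

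The key step is to compute the structure maps $H(L(F_n)) \to H(L(F_{n+1}))$. On the summand at $(u,k)$ with $u \in \sink(E)$ this is the canonical isomorphism onto the summand at the same $(u,k)$, the relevant inclusion of blocks being a corner inclusion of matrix algebras. On the summand at $(v,n)$ with $v \in \reg(E)$, the Cuntz--Krieger relation $v_n = \sum_{e \in s^{-1}(v)} e_n e_n^\ast$ (sum over the edges of $E$ out of $v$, lifted) expresses the block at $(v,n)$ as mapping, through a diagonal homomorphism followed by corner inclusions, into the product of the blocks at the vertices $(r(e), n+1)$. Factoring through that finite product and using that $H$ turns finite products into direct sums together with $M_\infty$-stability — so that the corners and the diagonal contribute, respectively, the identity and the fold map on $H(\ell)$ — one finds this summand maps to the summand at $(w,n+1)$ by multiplication by $(A_E)_{v,w}$. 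Thus the structure maps are given by $\sigma A_E^t$ on the $\reg(E)$-summands and by the identity on the $\sink(E)$-summands.

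Finally I would identify this colimit through a presentation: it has one copy of $H(\ell)$ for each $(x,k) \in E^0 \times \Z$ (via homomorphisms $q(x,k,-) \colon H(\ell) \to \colim_n H(L(F_n))$), subject precisely to $q(v,k,h) = \sum_{w \in E^0} (A_E)_{v,w}\, q(w, k+1, h)$ for $v \in \reg(E)$, $k \in \Z$, $h \in H(\ell)$. This is exactly the presentation of $\coker\big(H(\ell)^{(\reg(E) \times \Z)} \xto{I - \sigma A_E^t} H(\ell)^{(E^0 \times \Z)}\big)$. By Lemma \ref{lem:mononotepi} the sequence $0 \to \Z[\sigma]^{(\reg(E))} \to \Z[\sigma]^{(E^0)} \to \gBF(E) \to 0$ is pure exact, so tensoring over $\Z$ with $H(\ell)$ and using $\Z[\sigma]^{(X)} \otimes_\Z H(\ell) \cong H(\ell)^{(X \times \Z)}$ identifies this cokernel with $\gBF(E) \otimes_\Z H(\ell)$. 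Moreover the $\Z[\sigma]$-action on $H(L(\ucov{E}))$ induced by the height-shift automorphism of $\ucov{E}$ sends the summand at $(x,k)$ onto the one at $(x,k+1)$, which matches the $\Z[\sigma]$-action on $\gBF(E) \otimes_\Z H(\ell)$, so the isomorphism is $\Z[\sigma]$-linear. The main obstacle is the bookkeeping in the middle step: keeping track of which vertices of $F_n$ are sinks and how this changes as $n$ grows, and verifying that the induced diagram of abelian groups is literally the cokernel diagram for $I - \sigma A_E^t$ tensored with $H(\ell)$; once that diagram is in hand, matching it with the presentation of $\gBF(E) \otimes H(\ell)$ is formal.
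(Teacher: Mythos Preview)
Your proof is correct and follows essentially the same route as the paper: reduce to finite $E$, filter the acyclic cover $\ucov{E}$ by the finite subgraphs on vertices of height in $[-n,n]$, apply the structure theorem for Leavitt path algebras of acyclic graphs together with additivity and $M_\infty$-stability, compute the transition maps, and identify the colimit. One minor remark: your appeal to purity (Lemma \ref{lem:mononotepi}) in the final step is superfluous, since right-exactness of $\otimes_\Z$ already identifies the cokernel of $(I-\sigma A_E^t)\otimes H(\ell)$ with $\gBF(E)\otimes_\Z H(\ell)$; equivalently, as the paper does, one may first compute $\colim_n \Z^{\sink(F_n)}\cong\gBF(E)$ and then tensor with $H(\ell)$, using that filtered colimits commute with tensor products.
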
 
\begin{proof} 
Because every row-finite graph is the filtering colimit of its finite complete subgraphs \cite{amp}*{Lemma 3.1}, we may assume that $E$ is finite. The canonical action of the generator $\sigma$ of $\Z$ on $\ucov{E}$ induces a $\ast$-algebra automorphism $s \colon L(\ucov{E}) \to L(\ucov{E})$ such that $s(v_n) = v_{n+1}$, $s(e_n) = e_{n+1} \, (v \in E^0, e \in E^1)$. For $n \in \N_0$, let $E_n$ be the full subgraph of $\ucov{E}$
on the vertices $E^0_n = \{v_k : v \in E^0, \ |k| \leq n\}$ and observe that
$s$ can be (co-)restricted to a map $s_n \colon L(E_{n}) \to L(E_{n+1})$ for all $n \geq 0$.
Since $\ucov{E} = \bigcup_{n \geq 1} E_n$, we have $L(\ucov{E}) = \colim_{n \geq 0} L(E_n)$ and thus
\[
H(L(\ucov{E})) \cong \colim_{n \geq 0} H(L(E_n)).
\]
The action induced by $s$ carries over to the colimit by means of the automorphism associated 
to the family $\{H(s_n)\}_{n \geq 0}$. 
Observe that each graph $E_n$ is acyclic. We shall use the fact that for an acyclic graph $F$ and the set $\cP_v(F)$ of paths ending in $v\in\sink(F)$, there is a $*$-isomorphism $L(F)\cong \bigoplus_{v\in\sink(F)}M_{\cP_v(F)}$. This is proved in \cite{lpabook}*{Theorem 2.6.17} under the assumption that $\ell$ is a field, but the proof does not use it; see also \cite{alg2.5}*{Proposici\'on 2.5.1}. By additivity and $M_\infty$-stability of $H$, we have an
isomorphism $H(L(E_n))\cong\Z^{\sink(E_n)}\otimes_\Z H(\ell)$. Observe that 
\[
\sink(E_n)=(\sink(E)\times\{i\in\Z: |i|\le n\})\sqcup( \reg(E)\times\{n\}).
\]
The transition map $H(\ell)\otimes\Z^{\sink(E_n)}\to H(\ell)\otimes\Z^{\sink(E_{n+1})}$ is induced by 
the inclusion $\Z^{\sink(E)\times\{i\in\Z:|i|\le n\}}\subset \Z^{\sink(E)\times\{i\in\Z:|i|\le n+1\}}$ and by 
\begin{equation}\label{eq:transmap}
(v,n)\mapsto \sum_{w\in r(s^{-1}\{v\})}A_E(v,w)(w,n+1)
\end{equation}
on $\Z^{\reg(E)\times\{n\}}$. It follows that  $\colim_{\N_0} \Z^{\sink(E_n)} \otimes_\Z H(\ell)
\cong \gBF(E) \otimes_\Z H(\ell)$.
\end{proof}

In what follows we write $K^{\gr}$ and $K^{h,\gr}$ for $K^{\Z_{\gr}}$ and $K^{h,\Z_{\gr}}$.

\begin{coro}\label{coro:bf-k0=k0gr} Let $E$ be a row-finite graph and $R$ a $\ast$-algebra. If $R$ has local units and trivial $\Z$-grading, then there are $\Z[\sigma]$-module 
isomorphisms 
\[
K_n^{{\gr}}(L(E)\otimes R) \cong \gBF(E) \otimes_\Z K_n(R), \quad 
K_n^{h,{\gr}}(L(E)\otimes R) \cong \gBF(E) \otimes_\Z K^h_n(R)
\]
\end{coro}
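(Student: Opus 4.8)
The plan is to deduce this from Theorem~\ref{thm:bf-k0=k0gr} together with the crossed product descriptions of graded and hermitian graded $K$-theory provided by Theorems~\ref{thm:k-cross=k-gr} and~\ref{thm:kh-cross=kh-gr}. First I would pass to a covering graph: since $R$ carries the trivial grading, for every $\Z$-graded $\ast$-algebra $B$ there is an evident isomorphism of $\Z$-$\ast$-algebras $\Z\hltimes(B\otimes R)\cong(\Z\hltimes B)\otimes R$, equivariant for the $\Z$-actions. Taking $B=L(E)$ with its standard grading and applying Proposition~\ref{prop:crosscover} to the constant weight $\omega\equiv 1$ yields a $\Z$-equivariant $\ast$-isomorphism $\Z\hltimes(L(E)\otimes R)\cong L(\ucov E)\otimes R$, with $\ucov E$ the covering appearing in Theorem~\ref{thm:bf-k0=k0gr}. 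The algebra $L(E)\otimes R$ has graded local units --- the elements $p\otimes e$ with $p$ a finite sum of vertices of $E$ and $e$ a local unit of $R$ --- which are self-adjoint once those of $R$ are, so Theorems~\ref{thm:k-cross=k-gr} and~\ref{thm:kh-cross=kh-gr} yield $\Z[\sigma]$-module isomorphisms
\[
K_n^{\gr}(L(E)\otimes R)\cong K_n\bigl(L(\ucov E)\otimes R\bigr),\qquad
K_n^{h,\gr}(L(E)\otimes R)\cong K^h_n\bigl(L(\ucov E)\otimes R\bigr),
\]
where $\sigma$ acts on the right through $s\otimes\id_R$, with $s$ the grading-shift automorphism of $L(\ucov E)$; this is the automorphism corresponding to $\alpha_\sigma$ on the crossed product, because Proposition~\ref{prop:crosscover} is $\Z$-equivariant.

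Next I would apply Theorem~\ref{thm:bf-k0=k0gr} to the functors $H\colon\Alg^\ast\to\cat{Ab}$ given by $H(A)=K_n(A\otimes R)$ and $H(A)=K^h_n(A\otimes R)$. Each satisfies the required hypotheses: additivity follows from $(A\times B)\otimes R\cong(A\otimes R)\times(B\otimes R)$ and additivity of $K_n$ (resp.\ $K^h_n$); preservation of filtering colimits holds because both $-\otimes R$ and $K_n$ (resp.\ $K^h_n$) preserve them; and $M_\infty$-stability follows from the $\ast$-isomorphism $M_\infty A\otimes R\cong M_\infty(A\otimes R)$, under which $\iota_x$ becomes the standard corner inclusion and is therefore inverted by $K_n$ (resp.\ $K^h_n$). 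Since $\ell\otimes_\ell R\cong R$ we have $H(\ell)=K_n(R)$ (resp.\ $K^h_n(R)$), so Theorem~\ref{thm:bf-k0=k0gr} provides $\Z[\sigma]$-module isomorphisms $K_n(L(\ucov E)\otimes R)\cong\gBF(E)\otimes_\Z K_n(R)$ and $K^h_n(L(\ucov E)\otimes R)\cong\gBF(E)\otimes_\Z K^h_n(R)$, in which $\sigma$ acts on the left-hand sides via $s\otimes\id_R$. Composing with the isomorphisms of the first step --- the two $\sigma$-actions agreeing since both are $s\otimes\id_R$ --- gives the statement.

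The routine parts are the verifications that $K_n(-\otimes R)$ and $K^h_n(-\otimes R)$ are additive, $M_\infty$-stable and compatible with filtering colimits, and that the isomorphism $\Z\hltimes(L(E)\otimes R)\cong L(\ucov E)\otimes R$ is $\ast$- and $\Z$-equivariant. The step I expect to require the most care is keeping track of the $\Z[\sigma]$-module structures: one must check that the shift action $s$ on $L(\ucov E)$ featuring in Theorem~\ref{thm:bf-k0=k0gr} is precisely the $\Z$-action matched by the isomorphisms of Theorems~\ref{thm:k-cross=k-gr} and~\ref{thm:kh-cross=kh-gr} --- given there by the automorphisms $\alpha_\sigma$ of $\Z\hltimes(L(E)\otimes R)$ --- which is where the $\Z$-equivariance of Proposition~\ref{prop:crosscover} enters; in the hermitian case one should also note that $L(E)\otimes R$ must have self-adjoint graded local units for Theorem~\ref{thm:kh-cross=kh-gr} to apply.
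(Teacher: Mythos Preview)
Your proposal is correct and follows essentially the same route as the paper: first identify $K_n^{\gr}(L(E)\otimes R)$ and $K_n^{h,\gr}(L(E)\otimes R)$ with $K_n(L(\ucov{E})\otimes R)$ and $K^h_n(L(\ucov{E})\otimes R)$ via Proposition~\ref{prop:crosscover} and Theorems~\ref{thm:k-cross=k-gr}, \ref{thm:kh-cross=kh-gr}, then apply Theorem~\ref{thm:bf-k0=k0gr} to the functors $K_n(-\otimes R)$ and $K^h_n(-\otimes R)$. Your version is simply more explicit about the intermediate isomorphism $\Z\hltimes(L(E)\otimes R)\cong L(\ucov{E})\otimes R$, about verifying the hypotheses on $H$, and about tracking the $\Z[\sigma]$-action; your observation that the hermitian case requires self-adjoint local units on $R$ is a genuine point the paper leaves implicit in the corollary (though it is stated in the introduction).
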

\begin{proof} We have isomorphisms $K_n^{{\gr}}(L(E)\otimes R) \cong K_n(L(\ucov{E})\otimes R)$ and
$K_n^{h,{\gr}}(L(E)\otimes R) \cong K^h_n(L(\ucov{E})\otimes R)$ 
by Proposition \ref{prop:crosscover} and Theorems \ref{thm:k-cross=k-gr} and \ref{thm:kh-cross=kh-gr}. 
The result now follows from Theorem \ref{thm:bf-k0=k0gr}. 
\end{proof}

\begin{lem} \label{lem:vertexlc-nonzero} 
Let $E$ be a finite graph and $\cl:\Z^{E^0}\to \gBF(E)$, $\cl(v)=[v]$, the canonical map. We have $\ker(\cl)\cap\N^{E^0}=0$. 
\end{lem}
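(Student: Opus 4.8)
The plan is to unwind the definition $\gBF(E)=\coker\bigl(I-\sigma A_E^t\colon\Z[\sigma]^{\reg(E)}\to\Z[\sigma]^{E^0}\bigr)$ and run a positivity/counting argument on the coefficients of a preimage. Let $x=\sum_{v\in E^0}x_v v\in\N^{E^0}$ satisfy $\cl(x)=0$; then, regarding $x$ as the degree-$0$ element $x\sigma^0\in\Z[\sigma]^{E^0}$, there is $y\in\Z[\sigma]^{\reg(E)}$ with $x=(I-\sigma A_E^t)y$. Writing $y=\sum_{i\in\Z}y_i\sigma^i$ with $y_i\in\Z^{\reg(E)}$ and almost all $y_i$ zero, and comparing coefficients of $\sigma^i$, I would obtain the relations $Iy_i-A_E^t y_{i-1}=x\cdot\delta_{i,0}$ for all $i\in\Z$.

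Next I would dispose of the negative coefficients: if $m$ is the least index with $y_m\neq 0$ and $m<0$, then the coefficient of $\sigma^m$ reads $Iy_m=0$, contradicting the injectivity of $I$; hence $y_i=0$ for $i<0$, so that $Iy_0=x$ and $Iy_i=A_E^t y_{i-1}$ for all $i\ge 1$. Since $I$ is the inclusion of $\Z^{\reg(E)}$ as the regular-supported functions in $\Z^{E^0}$, the first equation gives that $y_0$ is the restriction of $x$ to $\reg(E)$, in particular $y_0\ge 0$; as $A_E^t$ has non-negative entries, the relations $Iy_i=A_E^t y_{i-1}$ then propagate $y_i\ge 0$ for all $i\ge 0$ by induction.

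The heart of the argument is a mass estimate. Set $\|z\|=\sum_v z_v$. Then $\|Iy_i\|=\|y_i\|$, while using that $E$ is finite (so $E^0=\reg(E)\sqcup\sink(E)$) and that every regular vertex emits at least one edge,
\[
\|A_E^t y_{i-1}\|=\sum_{v\in\reg(E)}\#s^{-1}(v)\cdot (y_{i-1})_v\ \ge\ \sum_{v\in\reg(E)}(y_{i-1})_v=\|y_{i-1}\|,
\]
since $y_{i-1}\ge 0$. Combining with $Iy_i=A_E^t y_{i-1}$ gives $\|y_{i-1}\|\le\|y_i\|$ for every $i\ge 1$; as $y_i=0$ for $i$ large, all these masses vanish, so $\|y_0\|=0$, and $y_0\ge 0$ forces $y_0=0$, whence $x=Iy_0=0$. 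The argument is short and I expect no real obstacle; the only point needing care is the bookkeeping between the two index sets $\reg(E)\subseteq E^0$ and the observation that non-negativity of $x$ forces non-negativity of every coefficient $y_i$, which is what allows the mass inequality to be iterated.
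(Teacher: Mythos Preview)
Your argument is correct. It is, however, a genuinely different route from the one taken in the paper. The paper invokes the colimit description $\gBF(E)\cong\colim_n \Z^{\sink(E_n)}$ established in the proof of Theorem~\ref{thm:bf-k0=k0gr}: the transition maps $\Z^{\sink(E_n)}\to\Z^{\sink(E_{n+1})}$ are given by nonnegative integer matrices with no zero columns, so they send $\N^{\sink(E_n)}$ into $\N^{\sink(E_{n+1})}$ and meet $\N^{\sink(E_n)}$ trivially in their kernels; the conclusion then drops out of the colimit. You instead work directly from the presentation $\gBF(E)=\coker(I-\sigma A_E^t)$, expand a preimage $y=\sum_i y_i\sigma^i$, use minimality to kill negative-degree coefficients, propagate $y_i\ge 0$ inductively via $Iy_i=A_E^t y_{i-1}$, and then run the mass inequality $\|y_i\|\ge\|y_{i-1}\|$ coming from $\#s^{-1}(v)\ge 1$ for regular $v$, which together with finite support forces $y_0=0$ and hence $x=Iy_0=0$. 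Your approach is entirely self-contained---it needs nothing about covering graphs or Leavitt algebras, only the matrix $A_E$---whereas the paper's proof is shorter precisely because it recycles the structural description already built in Theorem~\ref{thm:bf-k0=k0gr}. One small point of bookkeeping you flagged yourself: the equation $Iy_i=A_E^t y_{i-1}$, read at a sink $w$, forces $(A_E^t y_{i-1})_w=0$, which is consistent with nonnegativity and is exactly what lets you identify $y_i$ with the regular-vertex part of $A_E^t y_{i-1}$ in the induction.
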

\begin{proof} Write $\gBF(E) \cong
\colim_{\N} \BF(E_n) \cong \colim_{\N} \Z^{\sink(E_n)}$ 
as in the proof of Theorem \ref{thm:bf-k0=k0gr}. 
The transition maps $\cl_n:\Z^{\sink(E_n)}\to \Z^{\sink(E_{n+1})}$ are as described in the proof of Theorem \ref{thm:bf-k0=k0gr}; in particular they are given by matrices with nonnegative coefficients and no zero columns. Hence $\cl_n(\N^{\sink(E_n)})\subset \N^{\sink(E_{n+1})}$ and $\N^{\sink(E_n)}\cap\ker(\cl_n)=0$. It follows that $\ker(\cl)\cap \N^{E^0}=0$.
\end{proof}

\begin{thm}\label{thm:reflinj}
Let $E$ be a finite graph and $f \colon L(E) \to R$
a $\Z$-graded algebra homomorphism. Assume that $\ell$ is a field. If $K_0^{\gr}(f)$ is a monomorphism, then $f$ is a monomorphism.
\end{thm}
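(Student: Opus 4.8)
The plan is to use the ideal structure of $L(E)$ together with the fact that graded ideals of a Leavitt path algebra are themselves (up to Morita equivalence) Leavitt path algebras, and that $K_0^{\gr}$ detects nonzero such ideals. Recall that for a field $\ell$, the graded ideals of $L(E)$ are in bijection with pairs consisting of a hereditary saturated subset $H\subseteq E^0$ and a subset of the ``breaking vertices''; since $E$ is finite, graded ideals correspond simply to hereditary saturated subsets $H\subseteq E^0$, and the ideal $I(H)$ generated by $H$ is a graded ideal with $L(E)/I(H)\cong L(E/H)$ and $I(H)$ graded Morita equivalent to $L(E_H)$, where $E_H$ is the restriction graph on $H$. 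In particular, for any nonzero graded ideal $I\triqui L(E)$ there is a nonempty hereditary saturated $H$ with $I\supseteq I(H)$, and $I(H)$ contains the vertex idempotents $v$ for $v\in H$.

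First I would reduce to showing: if $\ker(f)\ne 0$, then $K_0^{\gr}(f)$ is not injective. Since $f$ is a $\Z$-graded homomorphism, $\ker(f)$ is a graded ideal of $L(E)$, hence (as $E$ is finite and $\ell$ a field) equals $I(H)$ for some nonempty hereditary saturated $H\subseteq E^0$; in particular every vertex idempotent $v$ with $v\in H$ lies in $\ker f$. Now consider the class $[\ell^{E^0}]=\sum_{v\in E^0}[v\cdot L(E)]$ — more precisely I would work with the canonical map $\cl\colon \Z^{E^0}\to K_0^{\gr}(L(E))$ sending $v$ to the class of the graded projective module $vL(E)$, whose image in $K_0^{\gr}(L(E))=\gBF(E)$ (via Corollary \ref{coro:bf-k0=k0gr} with $R=\ell$) recovers the canonical map $\Z^{E^0}\to\gBF(E)$ of Lemma \ref{lem:vertexlc-nonzero}. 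Pick a nonzero $x=\sum_{v\in H}n_v[v]$ with all $n_v\ge 1$ (e.g. $x=\sum_{v\in H}[v]$), i.e. $x=\cl(\xi)$ with $\xi\in\N^{E^0}$ supported on $H$ and $\xi\ne 0$. By Lemma \ref{lem:vertexlc-nonzero}, $x\ne 0$ in $K_0^{\gr}(L(E))$. On the other hand, each $vL(E)$ with $v\in H$ is annihilated by $f$ in the sense that $f(v)=0$, so $vL(E)\otimes_{L(E)}R=f(v)R=0$; hence $K_0^{\gr}(f)(x)=\sum_{v\in H}n_v[f(v)R]=0$. Thus $x\in\ker(K_0^{\gr}(f))$ is a nonzero element, contradicting injectivity of $K_0^{\gr}(f)$.

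The main obstacle — and the step that needs the most care — is the claim that a nonzero graded ideal $I$ of $L(E)$ contains a nonzero \emph{idempotent of the form $v$ with $v\in E^0$}, equivalently that it contains $I(H)$ for a nonempty hereditary saturated $H$, and that the induced map on $K_0^{\gr}$ is compatible with the identification $K_0^{\gr}(L(E))\cong\gBF(E)$ in the precise way Lemma \ref{lem:vertexlc-nonzero} requires. For this I would invoke the structure theory of graded ideals of Leavitt path algebras over a field (the graded uniqueness theorem and the classification of graded ideals, cf. \cite{lpabook}): a graded ideal is a direct sum, as a graded $L(E)$-module, of modules $vL(E)$ over a hereditary saturated set of vertices together with extra projections at breaking vertices, and since $E$ is finite the breaking-vertex contribution is absent, so $\ker(f)=I(H)$. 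Care is also needed to match the module-theoretic $K_0^{\gr}$ used throughout the paper (classes of graded finitely generated projectives) with the description $\gBF(E)$ and the map $\cl$; this is exactly the content of Corollary \ref{coro:bf-k0=k0gr} specialized to $R=\ell$, combined with a direct check that under that isomorphism $[vL(E)]\mapsto[v]$, which follows by tracing through the chain of isomorphisms $K_0^{\gr}(L(E))\cong K_0(L(\ucov E))\cong\colim_n K_0(L(E_n))\cong\colim_n\Z^{\sink(E_n)}\cong\gBF(E)$ and recalling that $vL(E)$ corresponds to the finitely generated projective built from paths ending at $v$.

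Once these identifications are in place the argument is short; I would also remark that the hypothesis that $\ell$ is a field is used only through the classification of graded ideals (and could be relaxed accordingly), while finiteness of $E$ is used both to guarantee $\gBF(E)\ne 0$-type statements via Lemma \ref{lem:vertexlc-nonzero} and to kill breaking vertices. Note also that no hypothesis of the form \eqref{eq:lambda} is needed here: the whole proof takes place on the non-hermitian side.
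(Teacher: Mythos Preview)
Your argument is correct, but the paper's proof is considerably shorter because it bypasses the classification of graded ideals entirely. The paper argues directly: by Lemma~\ref{lem:vertexlc-nonzero} each class $[v]\in\gBF(E)\cong K_0^{\gr}(L(E))$ is nonzero, so injectivity of $K_0^{\gr}(f)$ forces $[f(v)]\ne 0$ and hence $f(v)\ne 0$ for every $v\in E^0$; then the graded uniqueness theorem for Leavitt path algebras (\cite{lpabook}*{Theorem 2.2.15}) immediately gives that $f$ is injective. Your route is the contrapositive of this, but you replace the single invocation of graded uniqueness by the structure theory of graded ideals (hereditary saturated sets, absence of breaking vertices for finite $E$) in order to extract a vertex in $\ker(f)$. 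That works, but it is strictly more machinery than needed: the graded uniqueness theorem \emph{is} precisely the statement that a graded homomorphism not killing any vertex is injective, so there is no need to identify $\ker(f)$ as $I(H)$ first. Your careful discussion of matching $[vL(E)]$ with $[v]\in\gBF(E)$ under Corollary~\ref{coro:bf-k0=k0gr} is also more than the paper spells out, though it is implicit there as well; and you are right that \eqref{eq:lambda} plays no role.
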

\begin{proof} Let $v \in E^0$. Because $\gBF(E)\owns [v] \neq 0$ 
by Lemma \ref{lem:vertexlc-nonzero}, the fact that $K_0^{\gr}(f)$ is a monomorphism 
implies that $[f(v)] = K_0^{\gr}(f)([v])$ is nonzero. In particular, we have $f(v) \neq 0$
for all $v \in E^0$, and so the result follows from the graded uniqueness theorem 
for Leavitt path algebras (see e.g. \cite{lpabook}*{Theorem 2.2.15}).
\end{proof}

\section{\topdf{$G$}{G}-Stability} \label{sec:stab}
\begin{stan} From now on, we will
assume the existence of an element $\lambda \in \ell$ satisfying \eqref{eq:lambda}. 
\end{stan}
\begin{conv}\label{conv:gstab} Fix an infinite set $\fX$ of cardinality 
greater or equal than that of $G\sqcup\N$. We will write 
$\fS$ for the categories of $G$-sets or $G$-graded sets of 
cardinality less or equal than $\#\fX$, and $|-|:\fS\to \mathrm{Sets}$ for the forgetful functor. 
If $X\in\fS$ and $A\in\AnyAlg$ we equip $M_XA$ with the $G$-action or grading induced by the inclusion into $C_X A$ or $C_X^\circ A$. If $X$ is just a set, then unless specified otherwise, we will regard it as an object of $\fS$ with the trivial $G$-action or grading.
\end{conv}

We say that a functor is \emph{matricially stable} to mean that it is $M_{\fX}$-stable
as defined in Subsection \ref{subsec:matstab}.

\begin{rmk} If $R$ is a unital $\ast$-algebra and $X$ is any set, 
then both $C_X R$ and $\Gamma_X R$ are also unital with unit 
$1 = \sum_{x \in X} \elmat_{x,x}$. Note that if $R$ is $G$-graded
then this element belongs to $\Gamma_X^\circ R\subset C_X^\circ R$,
hence both of these $\ast$-algebras are unital as well.
\end{rmk}

We define an \emph{ideal embedding} in $\AnyAlg$ to be
a monomorphism $i \colon A \to R$ such that 
$R$ is unital and $i(A)$ is a $G$-invariant (resp. homogenous) $\ast$-ideal of $R$. 
Let $R$ be a unital algebra in $\AnyAlg$ and $X$ in $\fS$. 
In the equivariant case, we set $C_X^\circ R = C_X R$. 
Let $\eps$ be a central unitary; in the equivariant case, assume $\eps$ is fixed by $G$; in the $G$-graded case, assume it is homogeneous of degree $1$. An element $\phi \in C_X^\circ R$ is \emph{$\eps$-invariant} if 
it is $\eps$-hermitian and fixed by $G$ 
(resp. and homogeneous of degree $1$). We are 
now in position to define the notion of $G$-stability.

\begin{defn} A functor $F \colon \AnyAlg \to\cat{C}$ is \emph{$G$-stable} 
if for any pair of sets $X,Y \in \fS$, 
any ideal embedding $i \colon A \to R$ and any two invariant $\eps$-hermitian 
elements $\phi \in C_X^\circ R, \psi \in C_Y^\circ R$, $F$ sends the inclusion 
\[
(M_X A)^\phi \lra (M_{X \sqcup Y} A)^{\phi \oplus \psi}
\]
to an isomorphism.
\end{defn}

Put 
\[
h_\pm := \begin{pmatrix}1 & 0\\0 & -1\end{pmatrix}.
\]
This element is $1$-hermitian; write $M_\pm := M_2^{h_\pm}$. 
For each $A$ in $\AnyAlg$, we write $\iota_+\colon A \to M_\pm A$ 
for the upper-left corner inclusion. A functor $F \colon \AnyAlg \to \cat{C}$ 
is \emph{$\iota_+$-stable} if the natural transformation $F(\iota_+):F\to F\circ M_{\pm}$ is an isomorphism.

The following proposition says that 
under the hypothesis of $\iota_+$-stability one may 
omit considering the $\eps$-invariant elements $\phi$ and $\psi$ in the definition above.

\begin{prop}\label{prop:G-stab-no-her} An $\iota_+$-stable functor
$F \colon \AnyAlg \to \cat{C}$ is $G$-stable
if and only if for each $A$ in $\AnyAlg$ it sends each inclusion
\[
    M_X A \lra M_{X \sqcup Y} A
\]
induced by a pair of sets $X, Y \in \fS$ to an isomorphism.
\end{prop}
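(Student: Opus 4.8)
The plan is to prove the two implications separately. The forward one is immediate: if $F$ is $G$-stable, apply the defining property to the central unitary $\eps=1$, to any ideal embedding $A\to R$, and to the $1$-hermitian invariant units $1\in C^\circ_X R$, $1\in C^\circ_Y R$; since $(M_XA)^1=M_XA$ and $(M_{X\sqcup Y}A)^{1\oplus 1}=M_{X\sqcup Y}A$, this says precisely that $F$ inverts $M_XA\to M_{X\sqcup Y}A$, with no appeal to $\iota_+$-stability. Note also that the displayed condition is equivalent to matricial stability: taking $X$ a point it gives directly that $F$ is $M_\fX$-stable, and conversely, since $M_\fX$-stability implies $M_V$-stability for every $V\in\fS$ (a standard consequence, cf. \cite{kkh}*{Lemma 2.4.1}), a two-out-of-three argument with the corner inclusions $A\to M_VA$ and $A\to M_WA$ recovers the condition for an arbitrary inclusion of $G$-(graded) sets $V\subseteq W$.

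For the converse, assume $F$ is $\iota_+$-stable and matricially stable, fix an ideal embedding $A\triangleleft R$, sets $X,Y\in\fS$, a central unitary $\eps$ (fixed by $G$, resp. homogeneous of degree $1$) and invariant $\eps$-hermitian units $\phi\in C^\circ_XR$, $\psi\in C^\circ_YR$; we must show $F$ inverts $j\colon(M_XA)^\phi\to(M_{X\sqcup Y}A)^{\phi\oplus\psi}$. The first step is to unwind $\iota_+$-stability: for $B=(M_VA)^\rho$ with $V\in\fS$ and $\rho$ an invariant $\eps$-hermitian unit of $C^\circ_VR$, a direct computation with the twisting matrix $h_\pm$ yields a natural identification $M_2(B)^{h_\pm}\cong(M_{V\sqcup V}A)^{\rho\oplus(-\rho)}$ under which $\iota_+$ becomes the inclusion of the first $V$-block; hence $F$ inverts every such block inclusion $(M_VA)^\rho\to(M_{V\sqcup V}A)^{\rho\oplus(-\rho)}$.

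The second step is to realize $j$ as a full corner inclusion and to run a Morita-type argument. Writing $R'=(M_{X\sqcup Y}A)^{\phi\oplus\psi}$ and $p=1_X$, one checks that $p$ is an invariant full projection of $R'$ — a genuine idempotent when $X$ is finite, a full projection of the multiplier algebra in general — under which $j$ identifies $(M_XA)^\phi$ with the corner $pR'p$. It therefore suffices to show: \emph{for $F$ as above, every unital $S\in\AnyAlg$, and every invariant full projection $q$ of $S$, the inclusion $qSq\to S$ is inverted by $F$.} Fullness gives a finite partition-of-unity relation which, after enlarging to a matrix algebra (harmless by matricial stability), exhibits $S$ as an $\eps$-hermitian corner of a matrix algebra over $qSq$; one then uses $\iota_+$-stability together with the element $\lambda$ of \eqref{eq:lambda} to compare such a corner with an ordinary matrix algebra, closing the argument with matricial stability once more. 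All the conjugations that appear — by permutation matrices, by elementary transvections, and by the $\lambda$-built idempotents — are invariant and are $\ast$-isomorphisms, hence transparent to $F$, so the bookkeeping is routine once the corner statement is in hand.

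The main obstacle is precisely this corner statement in the presence of a nontrivial involution: when $2\notin\ell^\times$, an $\eps$-hermitian idempotent of a matrix algebra need not split off as an orthogonal summand, so the naive $\tfrac12$-arguments must be replaced by ones using $\lambda$ — which is exactly why \eqref{eq:lambda} is in force as the Standing assumption from Section~\ref{sec:stab} on. The case of infinite $X$ or $Y$ introduces no new ideas, being handled by the same corner/Morita formalism applied to non-unital algebras with (graded, resp. $G$-invariant) local units.
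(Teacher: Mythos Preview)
Your forward direction is fine, and your first step of the converse---unwinding $\iota_+$-stability as the block inclusion $(M_VA)^\rho\hookrightarrow(M_{V\sqcup V}A)^{\rho\oplus(-\rho)}$---is correct and is indeed where the argument of \cite{kkh}*{Proposition 2.4.4} begins. The rest, however, has a genuine gap.

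The side claim that the displayed condition is equivalent to matricial stability is false. The displayed condition ranges over $X,Y\in\fS$, that is, $G$-sets or $G$-graded sets with possibly nontrivial structure, whereas matricial stability concerns only the trivially structured set $\fX$. Your two-out-of-three reduction breaks because the corner inclusion $A\to M_VA$, $a\mapsto\elmat_{v,v}\otimes a$, is in general \emph{not} a morphism of $\AnyAlg$: in the equivariant case it is $G$-equivariant only when $v$ is a fixed point of $V$, and in the graded case only when $|v|=1$. That the two conditions differ is precisely the content of Corollary~\ref{prop:stab} and the corollary following Remark~\ref{rmk:natisos}: matricial plus $\iota_+$-stability does not force $G$-stability without the extra input $F\cong F\circ M_G$.

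Consequently your converse, which from the outset assumes only matricial stability, is attempting to prove an implication that fails, so the Morita reduction must break. And it does: after realising $S$ as a corner of $M_n(qSq)$ you are left with another twisted corner inclusion, and the hand-waved step ``compare with an ordinary matrix algebra via $\iota_+$ and $\lambda$'' is exactly where the full displayed hypothesis---not mere matricial stability---is needed. The argument of \cite{kkh}*{Proposition 2.4.4} instead proceeds directly: after your first step one writes down an explicit invariant unit $w\in C^\circ_{X\sqcup X}R$ built from $\lambda$ and $\phi$ (for instance $w=\bigl(\begin{smallmatrix}1&1\\ \lambda^*\phi&-\lambda\phi\end{smallmatrix}\bigr)$) satisfying $w^*h_\eps w=\phi\oplus(-\phi)$ for the hyperbolic element $h_\eps=\bigl(\begin{smallmatrix}0&1\\\eps&0\end{smallmatrix}\bigr)$, so that $\ad(w)$ identifies $(M_{X\sqcup X}A)^{\phi\oplus(-\phi)}$ with $M_XB$ for $B=(M_2A)^{h_\eps}$, independently of $\phi$. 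The displayed hypothesis, applied to this $B\in\AnyAlg$, inverts $M_XB\to M_{X\sqcup Y}B$, and reversing the identifications (now with $\psi$ on $Y$) yields the claim.
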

\begin{proof} The argument of 
\cite{kkh}*{Proposition 2.4.4} shows this.
\end{proof}

The proof of the following lemma is a straightforward verification.

\begin{lem}\label{lema:triv-ac-gr} 
Let $A$ a $G$-$\ast$-algebra and $B$ a $G$-graded $\ast$-algebra.
\begin{itemize}
    \item[(i)] If $X$ is a $G$-set, then the linear map
    \begin{equation}\label{mor:triv-ac}
    M_G M_X A\owns\elmat_{s,t} \otimes \elmat_{x,y} \otimes a
    \mapsto \elmat_{s^{-1}x, t^{-1}y} \otimes \elmat_{s,t} \otimes a\in M_{|X|} M_G A
    \end{equation}
    is a $G$-$\ast$-algebra isomorphism, natural with respect
to both $X$ and $A$. 
    \item [(ii)] If $X$ is a $G$-graded set, then the linear map 
    \begin{equation}\label{mor:triv-gr}
M_{G}M_X B \owns \elmat_{s,t} \otimes \elmat_{x,y} \otimes b \mapsto
    \elmat_{x,y} \otimes \elmat_{s|x|,t|y|} \otimes b\in M_{|X|}M_G B
    \end{equation}
    is a $G$-graded $\ast$-algebra isomorphism, natural with respect to both $X$ and $B$.
\end{itemize}
\qed
\end{lem}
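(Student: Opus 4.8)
The plan is to recognise each of the two displayed maps as nothing more than a relabelling of matrix indices, which makes it automatically a $\ast$-algebra isomorphism, and then to check the one genuinely new point: that the chosen relabelling is compatible with the ambient $G$-action in (i), respectively with the ambient grading in (ii).

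First I would pass through the canonical identifications $M_UM_V(-)\cong M_U\otimes_\ell M_V\otimes_\ell(-)\cong M_{U\times V}(-)$, under which $M_GM_XA$ is the algebra of finitely supported $(G\times X)$-indexed matrices over $A$ and $M_{|X|}M_GA$ is that of $(X\times G)$-indexed matrices over $A$. With these identifications the map \eqref{mor:triv-ac} is the one induced by the bijection $\beta\colon G\times X\to X\times G$, $\beta(s,x)=(s^{-1}x,s)$, whose inverse is $(x,s)\mapsto(s,sx)$; likewise \eqref{mor:triv-gr} is induced by $\gamma\colon G\times X\to X\times G$, $\gamma(s,x)=(x,s|x|)$, with inverse $(x,t)\mapsto(t|x|^{-1},x)$. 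Since any bijection of index sets $I\to J$ induces an isomorphism $M_IC\to M_JC$ for every $\ast$-algebra $C$ --- it visibly respects the pointwise sum, the convolution product, and the involution $f^\ast(i,j)=f(j,i)^\ast$ --- both maps are $\ast$-algebra isomorphisms. Naturality in $A$ and $B$ is clear because the algebra entry is left untouched, and naturality in $X$ is the routine compatibility of the relabelling with the functoriality of $M_{(-)}$.

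It then remains to verify equivariance in (i) and homogeneity of degree $1$ in (ii). In (i) the outer copy of $G$ carries the left-translation action, so on the source $g\cdot(\elmat_{s,t}\otimes\elmat_{x,y}\otimes a)=\elmat_{gs,gt}\otimes\elmat_{gx,gy}\otimes(g\cdot a)$ and on the target $g\cdot(\elmat_{x,y}\otimes\elmat_{s,t}\otimes a)=\elmat_{x,y}\otimes\elmat_{gs,gt}\otimes(g\cdot a)$; applying \eqref{mor:triv-ac} and using $(gs)^{-1}(gx)=s^{-1}x$ one obtains
\[
\elmat_{s^{-1}x,\,t^{-1}y}\otimes\elmat_{gs,gt}\otimes(g\cdot a)
\]
on both sides, and since such elements span the source the map and its inverse are $G$-equivariant. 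In (ii) the outer copy of $G$ is taken with the identity weight and $|X|$ with the trivial weight; then $\elmat_{s,t}\otimes\elmat_{x,y}\otimes b$ with $b$ homogeneous lies in degree $s\,|x|\,|b|\,|y|^{-1}t^{-1}$, while $\elmat_{x,y}\otimes\elmat_{s|x|,\,t|y|}\otimes b$ lies in degree $(s|x|)\,|b|\,(t|y|)^{-1}$; these agree, so \eqref{mor:triv-gr} and its inverse are homogeneous of degree $1$, hence morphisms in $\GgrAlg$.

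I do not expect a genuine obstacle here: the whole content is bookkeeping. The only points that need care are fixing the conventions on the outer copy of $G$ (left translation in (i), identity weight in (ii)) and on $|X|$ (trivial), and keeping track of the inverses so that $\beta$ and $\gamma$ are genuine bijections and the compatibility identities hold on the nose.
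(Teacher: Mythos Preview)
Your proof is correct and is precisely the kind of direct verification the paper has in mind: the paper gives no argument beyond the sentence ``The proof of the following lemma is a straightforward verification'' and the \qed\ box, and your write-up supplies exactly those routine checks (bijection of index sets, equivariance via $(gs)^{-1}(gx)=s^{-1}x$, and matching of degrees). There is nothing to compare against and no gap to point out.
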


As a consequence of Lemma \ref{lema:triv-ac-gr}, we obtain the following hermitian analogue of
\cite{kkg}*{Proposition 3.1.9}, which will useful for constructing the equivariant and graded versions of hermitian
bivariant $K$-theory.

\begin{prop}
\label{prop:stab} Let $F : \AnyAlg \to \cat{C}$ be a matricially stable, 
$\iota_+$-stable functor. Then $F(M_G(-))$ has the same stability properties
and is moreover $G$-stable.
\qed
\end{prop}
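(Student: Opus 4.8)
The plan is to set $H := F\circ M_G\colon\AnyAlg\to\cat{C}$ and to establish, in this order, that $H$ is matricially stable, that $H$ is $\iota_+$-stable, and that $H$ is $G$-stable. The third property will not be verified directly: once the first two are in hand, Proposition~\ref{prop:G-stab-no-her} applies to $H$, so it will suffice to show that $H$ carries every inclusion $M_XA\to M_{X\sqcup Y}A$, with $A\in\AnyAlg$ and $X,Y\in\fS$, to an isomorphism. The engine in all three cases is Lemma~\ref{lema:triv-ac-gr}, which supplies, naturally in both the set and the algebra variable, an isomorphism $M_GM_XA\cong M_{|X|}M_GA$ in $\AnyAlg$ trading a $G$-set (resp.\ $G$-graded set) $X$ for the underlying plain set $|X|$ and pushing the $G$-action (resp.\ grading) entirely onto the $M_G$ factor; all arguments below run uniformly in the equivariant case via Lemma~\ref{lema:triv-ac-gr}(i) and in the graded case via Lemma~\ref{lema:triv-ac-gr}(ii).

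For matricial stability I would take $X=\fX$ with its trivial structure. For $A\in\AnyAlg$ and $x\in\fX$ one applies $M_G$ to the corner inclusion $\iota_x\colon A\to M_\fX A$ and transports it along the isomorphism of Lemma~\ref{lema:triv-ac-gr}; reading off the explicit formulas \eqref{mor:triv-ac} and \eqref{mor:triv-gr} --- and using that the triviality of the structure on $\fX$ makes the $s^{-1}(-)$ twists disappear there --- one identifies $M_G(\iota_x)$ with the corner inclusion $\iota_x\colon M_GA\to M_\fX(M_GA)$. Since $M_GA$ again lies in $\AnyAlg$ and $F$ is matricially stable, $F$ sends this map to an isomorphism, hence so does $H$; by \cite{kkh}*{Lemma 2.4.1} this is matricial stability of $H$.

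For $\iota_+$-stability the same computation is run with the two-point set $\{1,2\}$ (trivial structure) in place of $\fX$ and with the $1$-hermitian element $h_\pm$. The ring isomorphism $M_GM_2A\cong M_2M_GA$ of Lemma~\ref{lema:triv-ac-gr} intertwines the $h_\pm$-twisted involution on $M_GM_\pm A=M_G\bigl((M_2A)^{h_\pm}\bigr)$ with the $h_\pm$-twisted involution on $M_\pm(M_GA)=\bigl(M_2M_GA\bigr)^{h_\pm}$: one has only to note that conjugation by the scalar $2\times2$ matrix $h_\pm$ commutes with the ``$G$-transpose'' defining the involution on $M_G$, the $G$-action/grading being manifestly preserved. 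Under this identification $H(\iota_+)=F\bigl(M_G(\iota_+)\bigr)$ becomes $F$ applied to $\iota_+\colon M_GA\to M_\pm(M_GA)$, which is an isomorphism because $F$ is $\iota_+$-stable; thus $H$ is $\iota_+$-stable.

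It remains to treat $G$-stability, and here --- as explained --- Proposition~\ref{prop:G-stab-no-her} together with the $\iota_+$-stability just proved reduces us to the inclusions $M_XA\to M_{X\sqcup Y}A$ with $X,Y\in\fS$. Applying $H$ and then Lemma~\ref{lema:triv-ac-gr}, naturally in the set variable, carries this map to the inclusion $M_{|X|}(M_GA)\to M_{|X|\sqcup|Y|}(M_GA)$ of matrix algebras over plain sets whose cardinalities are all at most $\#\fX$ (here one uses that $\#\fX\ge\#(G\sqcup\N)$ and that $\#\fX$ is infinite, so $|X|\sqcup|Y|$, as well as $\fX\times X$ appearing above, remain inside $\fS$). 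Since $F$ is matricially stable, it sends such an inclusion of matrix algebras over admissible index sets to an isomorphism --- the standard propagation of $M_\fX$-stability to other index sets --- and therefore so does $H$; this completes the argument. The only step that demands genuine, if routine, care is the compatibility of the reorganization isomorphisms of Lemma~\ref{lema:triv-ac-gr} with the $h_\pm$-twisted involution in the $\iota_+$ step and with the two ambient structures at once; modulo these checks the proof is formally the same as that of \cite{kkg}*{Proposition 3.1.9}.
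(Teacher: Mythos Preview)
Your proof is correct and follows essentially the same route as the paper, which records the result as an immediate consequence of Lemma~\ref{lema:triv-ac-gr} (the reorganization isomorphisms $M_GM_XA\cong M_{|X|}M_GA$) together with the argument of \cite{kkg}*{Proposition 3.1.9}. Your write-up is a careful unpacking of precisely that argument in the hermitian setting, including the check that the swap isomorphism is compatible with the $h_\pm$-twisted involution; nothing more is needed.
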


\begin{rmk}\label{rmk:natisos} Consider the $G$-set $G \sqcup \star$, where $G$ 
acts trivially on $\star$. For each $G$-$\ast$-algebra $A$, 
the inclusions $\star \subset G \sqcup \star \supset G$ yield a 
zig-zag of inclusions
\begin{equation}\label{mor:zigzag-MGA}
    A \to M_{G \sqcup \star} A \leftarrow M_G A
\end{equation}
relating $A$ and $M_G A$. 
Observe that any $G$-stable functor maps \eqref{mor:zigzag-MGA} 
to an isomorphism.

In the graded setting, for each $G$-graded
$\ast$-algebra $B$ the $G$-graded set inclusion $\{1\} \subset G$ 
yields a $G$-graded $\ast$-algebra inclusion 
\begin{equation}\label{mor:iota1}
  \iota_1 \colon B \to M_G B,
\end{equation}
which any $G$-stable functor sends to an isomorphism.
\end{rmk}

\begin{coro} Let $F : \AnyAlg \to \cat{C}$ be a matricially stable, $\iota_+$-stable
functor. The following statements are equivalent:
\begin{itemize}
    \item[(i)] The functor $F$ is $G$-stable.
    \item[(ii)] For each $A$ in $\AnyAlg$ the functor $F$ sends the 
    morphism \eqref{mor:iota1} (resp. the zig-zag \eqref{mor:zigzag-MGA})
    to an isomorphism.
    \item[(iii)] The functors $F$ and $F\circ M_G$ are naturally isomorphic.
\end{itemize}
\end{coro}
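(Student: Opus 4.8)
The plan is to prove the three-way equivalence by the usual circular strategy, noting that much of the work is already packaged in Proposition~\ref{prop:G-stab-no-her} and Remark~\ref{rmk:natisos}. First I would observe that (i)$\Rightarrow$(ii) is immediate: the morphism \eqref{mor:iota1} (in the graded case) is precisely an inclusion of the form $M_{\{1\}}B \to M_{\{1\}\sqcup Y}B$ with $Y = G\setminus\{1\}$ and trivial $\eps$-hermitian data, while in the equivariant case the zig-zag \eqref{mor:zigzag-MGA} is sent to an isomorphism by any $G$-stable functor by the remark already made in Remark~\ref{rmk:natisos}; this is exactly what $G$-stability gives when combined with $\iota_+$-stability via Proposition~\ref{prop:G-stab-no-her}.

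Next, for (ii)$\Rightarrow$(iii): in the graded setting $\iota_1$ is a natural transformation $F \Rightarrow F\circ M_G$, so if it is a natural isomorphism we are done. In the equivariant setting the two maps in \eqref{mor:zigzag-MGA} assemble into natural transformations $F \Rightarrow F\circ M_{G\sqcup\star}$ and $F\circ M_G \Rightarrow F\circ M_{G\sqcup\star}$; hypothesis (ii) says both become isomorphisms after applying $F$, hence $F \cong F\circ M_{G\sqcup\star} \cong F\circ M_G$ naturally. (One should also use matricial stability to identify $M_{G\sqcup\star}A$ appropriately, but this is routine.)

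Finally, for (iii)$\Rightarrow$(i): assuming $F \cong F\circ M_G$, apply Proposition~\ref{prop:stab}, which tells us that $F\circ M_G$ is $G$-stable. Since $G$-stability is a property of a functor that is preserved under natural isomorphism — it is the assertion that certain canonically-given morphisms are sent to isomorphisms, and a natural isomorphism $F\cong F'$ identifies the image of any morphism under $F$ with its image under $F'$ — it follows that $F$ is $G$-stable as well. The key inputs here are Proposition~\ref{prop:stab} (that $F\circ M_G$ is automatically $G$-stable whenever $F$ is matricially and $\iota_+$-stable) and Lemma~\ref{lema:triv-ac-gr} underlying it.

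The main obstacle, such as it is, is bookkeeping rather than mathematics: one must be careful that the "canonical" morphisms defining $G$-stability really do transport correctly along the natural isomorphism in (iii), and in the equivariant case one must handle the auxiliary object $M_{G\sqcup\star}A$ and the trivial-action set $\star$ consistently. All the substantive content — the isomorphisms of Lemma~\ref{lema:triv-ac-gr} and the reduction in Proposition~\ref{prop:G-stab-no-her} allowing one to ignore the $\eps$-hermitian elements $\phi,\psi$ — has already been established, so the proof is essentially a diagram-chasing assembly of those pieces.
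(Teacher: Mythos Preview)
Your proposal is correct and follows essentially the same cycle of implications as the paper's proof: (i)$\Rightarrow$(ii) via Remark~\ref{rmk:natisos}, (ii)$\Rightarrow$(iii) trivially from the naturality of the maps in \eqref{mor:iota1} and \eqref{mor:zigzag-MGA}, and (iii)$\Rightarrow$(i) via Proposition~\ref{prop:stab} together with invariance of $G$-stability under natural isomorphism. Your added remarks about Proposition~\ref{prop:G-stab-no-her} and Lemma~\ref{lema:triv-ac-gr} are accurate but not strictly needed beyond what the paper cites.
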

\begin{proof} The implication (i)$\Rightarrow$(ii) follows from
Remark \ref{rmk:natisos}. The fact that (ii)$\Rightarrow$(iii) is trivial. The implication (iii)$\Rightarrow$(i) is a consequence of 
Proposition \ref{prop:stab}, together with the fact that $G$-stability 
is preserved under natural isomorphisms. 
\end{proof}

\section{Hermitian graded bivariant \topdf{$K$}{K}-theory}
\label{sec:kkhgr}
\numberwithin{equation}{section}
In this section we introduce bivariant $K$-theory categories
for $\GAlg$ and $\GgrAlg$. For this purpose we first adapt 
the notions of polynomial homotopy invariance, extensions
and excisive homology theories from \cite{kkh} and \cite{kkg} to the present context.

Let $A$ be an object of $\AnyAlg$. We view the polynomial ring
$A[t] = A \otimes_\Z \Z[t]$ as an object of $\AnyAlg$ 
by considering the trivial action or grading on $t$.
A functor $F \colon \AnyAlg \to \cat{C}$ is \emph{(polynomially)
homotopy invariant} if it sends each inclusion 
$A \hookrightarrow A[t]$ of an algebra into its 
ring of polynomials to an isomorphism.

An \emph{extension} in $\AnyAlg$ is an exact sequence
\begin{equation}\label{ext}
    0 \to A \xto{i} B \xto{p} C \to 0.
\end{equation}    
We say that the category of $\ell[G]$-modules or $G$-graded
$\ell$-modules is the \emph{underlying category} of $\fA$ and 
we denote it $\fU$; writing $U \colon \fA \to \fU$ for the canonical forgetful functor, 
we say that \eqref{ext} is \emph{weakly split} if $U(p)$ has a section. 

Write $\cE$ for the class of 
weakly split extensions of $\fA$.
Let $\cat{T}$ be a triangulated category with inverse suspension 
functor $\Omega$. An \emph{excisive homology theory} on 
$\fA$ with values in $\cat{T}$ 
is a functor $H \colon \fA \to \cat{T}$ together with a 
family of maps $\{\partial_E \colon \Omega H(C) \to H(A)\}_{E \in \cE}$
compatible with  morphisms of extensions (\cite{kk}*{Section 6.6})
such that for each $E \in \cE$ there is a distinguished triangle
\[
    \Omega H(C) \xto{\partial_E} H(A) \to H(B) \to H(C).
\]

Applying the analogue of the construction of \cite{kkg}*{Section 2}
to bivariant hermitian $K$-theory \cite{kkh}, we obtain
an excisive homology theory 
\[
    j^h_\fA \colon \fA \to kk^h_\fA
\]
that is $M_\fX$-stable, $\iota_+$-stable and homotopy invariant.
Moreover, any other excisive homology theory $H \colon \fA \to \cat{T}$ 
which satisfies these stability properties factors uniquely
trough $j^h_\fA$ via a triangle functor.

\begin{defn}
We define \emph{equivariant bivariant hermitian $K$-theory} as the category whose 
objects are $G$-$\ast$-algebras and the morphisms are given by
\[
    kk_G^h(A,B) := kk^h_{\GAlg}(M_G A, M_G B),
\]
with the composition rule of $kk^h_{\GAlg}$. There is a canonical functor
\[
    j^h_G \colon \GAlg \to kk^h_G
\]
defined as the identity on objects, and that sends an equivariant map
$f \colon A \to B$ to the image of
$M_G f : M_G A \to M_G B$ via $j^h_{\GAlg}$, 
viewed as an arrow of $kk^h_{G}$.

Likewise, we define \emph{$G$-graded hermitian bivariant $K$-theory} as the category
whose objects are $G$-graded $\ast$-algebras and its morphisms are given by
\[
    kk_{G_{\gr}}^h(A,B) := kk^h_{\GgrAlg}(M_G A, M_G B).
\]
There is an analogous functor 
\[
    j^h_{G_{\gr}} \colon \GgrAlg \to kk_{G_{\gr}}^h
\] 
in the graded setting, which is the identity on objects and 
maps a $G$-graded morphism
$f \colon A \to B$ to the image of
$M_G f : M_G A \to M_G B$ via $j^h_{\GgrAlg}$, 
viewed as an arrow of $kk^h_{G_{\gr}}$.
\end{defn}

With these definitions in place,
the following theorems can be adapted 
from \cite{kkg}*{Theorem 4.1.1, Theorem 4.2.1} by means of Proposition 
\ref{prop:stab} and the universal property of $j_{\fA}^h$.

\begin{thm}[cf. {\cite{kkg}*{Theorem 4.1.1}}] 
\label{thm:kkgeq} The functor $j^h_G \colon \GAlg \to kk^h_G$
is initial in the cate--gory of $G$-stable,
homotopy invariant, $M_\fX$-stable and $\iota_+$-stable excisive 
homology theories.
\qed
\end{thm}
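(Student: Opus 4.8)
The plan is to deduce Theorem~\ref{thm:kkgeq} from the universal property already available for $j^h_{\GAlg}\colon\GAlg\to kk^h_{\GAlg}$---namely, that it is initial among $M_\fX$-stable, $\iota_+$-stable, homotopy invariant excisive homology theories on $\GAlg$---together with Proposition~\ref{prop:stab}. It is convenient to view $kk^h_G$, by way of its defining formula $kk^h_G(A,B)=kk^h_{\GAlg}(M_GA,M_GB)$, as the full subcategory of $kk^h_{\GAlg}$ spanned by the algebras $M_GA$, so that $j^h_G$ is identified with $j^h_{\GAlg}\circ M_G$. Since $M_G$ commutes with the loop functor one has $\Omega(M_GA)\cong M_G(\Omega A)$, and---exactly as in \cite{kkg}*{Section~4}---the cone in $kk^h_{\GAlg}$ of a morphism between algebras of the form $M_G(-)$ is again $kk^h$-isomorphic to one of this form (represent the morphism by a classifying homomorphism and take its mapping cone); hence this subcategory is a triangulated subcategory, and $kk^h_G$ inherits a triangulated structure for which $A\mapsto M_GA$ is a triangle functor.

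First I would check that $j^h_G$ is a $G$-stable, homotopy invariant, $M_\fX$-stable and $\iota_+$-stable excisive homology theory. The functor $M_G\colon\GAlg\to\GAlg$ is exact on the underlying $\ell[G]$-module category, hence it carries weakly split extensions to weakly split extensions and commutes with the path and loop functors and, up to natural isomorphism, with the corner embeddings $\iota_x$ and $\iota_+$. It follows that $j^h_G=j^h_{\GAlg}\circ M_G$ inherits from $j^h_{\GAlg}$ excision---with boundary maps those of $j^h_{\GAlg}$, which stay natural in the extension---as well as homotopy invariance, $M_\fX$-stability and $\iota_+$-stability. The only genuinely new property is $G$-stability, and this is precisely Proposition~\ref{prop:stab} applied to $F=j^h_{\GAlg}$: for sets $X,Y\in\fS$, an ideal embedding $i\colon A\to R$ and invariant $\eps$-hermitian elements $\phi\in C_X^\circ R$ and $\psi\in C_Y^\circ R$, the functor $j^h_{\GAlg}(M_G(-))$ sends the inclusion $(M_XA)^\phi\to(M_{X\sqcup Y}A)^{\phi\oplus\psi}$ to an isomorphism.

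Next I would establish the universal property. Let $H\colon\GAlg\to\cat{T}$ be a $G$-stable, homotopy invariant, $M_\fX$-stable and $\iota_+$-stable excisive homology theory with $\cat{T}$ triangulated. As $H$ satisfies the three hypotheses for which $j^h_{\GAlg}$ is universal, there is a unique triangle functor $\hat H\colon kk^h_{\GAlg}\to\cat{T}$ with $\hat H\circ j^h_{\GAlg}=H$; restricting $\hat H$ to the triangulated subcategory on the objects $M_GA$ and composing with the natural isomorphisms $H(A)\iso H(M_GA)$ obtained by applying the $G$-stable functor $H$ to the zig-zag~\eqref{mor:zigzag-MGA} of Remark~\ref{rmk:natisos} yields a triangle functor $\bar H\colon kk^h_G\to\cat{T}$ with $\bar H\circ j^h_G=H$. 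Uniqueness of $\bar H$ up to natural isomorphism follows as in the uniqueness half of the universal property of $j^h_{\GAlg}$, since every object of $kk^h_G$ lies in the image of $j^h_G$ and the triangulated structure of $kk^h_G$ together with its boundary maps is generated by $j^h_G$ and the weakly split extensions of $\GAlg$---all of which a triangle functor preserves. The graded counterpart is proved in the same way, with the inclusion~\eqref{mor:iota1} of Remark~\ref{rmk:natisos} in place of the zig-zag~\eqref{mor:zigzag-MGA}.

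The part that requires genuine care is not this formal factorization but the foundational input summarized in the first paragraph: that carrying out the construction of \cite{kkg}*{Section~2} with the hermitian bivariant theory of \cite{kkh} in place of algebraic $K$-theory, and with $M_G$-stabilization built in, indeed produces a triangulated category---equivalently, that the subcategory of $kk^h_{\GAlg}$ on the objects $M_GA$ is closed under cones. This is exactly where Proposition~\ref{prop:stab}, which turns $j^h_{\GAlg}\circ M_G$ into a $G$-stable functor, is indispensable; once it is in hand, the whole of \cite{kkg}*{Section~4} transcribes to the hermitian setting with $j^h_{\GAlg}$ replacing $j_{\GAlg}$, and the theorem follows with no further obstacle.
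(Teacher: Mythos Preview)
Your proposal is correct and follows exactly the route the paper indicates: the theorem is stated with a bare \qed, the paper having already remarked that it ``can be adapted from \cite{kkg}*{Theorem 4.1.1} by means of Proposition~\ref{prop:stab} and the universal property of $j^h_{\fA}$''. You have simply unpacked that sentence---factoring a $G$-stable $H$ through $j^h_{\GAlg}$, restricting to the full triangulated subcategory on the $M_GA$, and using the zig-zag of Remark~\ref{rmk:natisos}---which is precisely what the reference to \cite{kkg}*{Section 4} is meant to convey.
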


\begin{thm}[cf. {\cite{kkg}*{Theorem 4.2.1}}] 
\label{thm:kkggr} The functor $j^h_{G_{\gr}} \colon 
\GgrAlg \to kk_{G_{\gr}}^h$ is initial in the category 
of $G$-stable, homotopy invariant, $M_\fX$-stable and $\iota_+$-stable excisive 
homology teories.
\qed
\end{thm}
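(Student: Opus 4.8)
The plan is to transcribe, essentially verbatim, the proof of \cite{kkg}*{Theorem 4.2.1} into the hermitian framework, substituting the universal property of $j^h_{\GgrAlg}$ recorded above for that of the non-hermitian functor $j_{G_{\gr}}$, and our Proposition \ref{prop:stab} for \cite{kkg}*{Proposition 3.1.9}; the equivariant case (Theorem \ref{thm:kkgeq}) runs in parallel, with the zig-zag \eqref{mor:zigzag-MGA} playing the role of \eqref{mor:iota1}. First I would check that $j^h_{G_{\gr}}$ is itself an object of the category in question. The endofunctor $M_G$ of $\GgrAlg$ is exact, commutes with $-\otimes_\Z\Z[t]$, and carries weakly split extensions to weakly split extensions, so $j^h_{\GgrAlg}\circ M_G$ is an excisive, homotopy invariant homology theory; by Proposition \ref{prop:stab} it is $M_\fX$-stable, $\iota_+$-stable and, in addition, $G$-stable. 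The universal property of $j^h_{\GgrAlg}$ then produces a triangle endofunctor $\widehat{M}_G$ of $kk^h_{\GgrAlg}$ with $\widehat{M}_G\circ j^h_{\GgrAlg}=j^h_{\GgrAlg}\circ M_G$; transporting the triangulated structure of $kk^h_{\GgrAlg}$ along the identity-on-objects functor $P:=\widehat{M}_G\colon kk^h_{\GgrAlg}\to kk^h_{G_{\gr}}$ (which satisfies $P\circ j^h_{\GgrAlg}=j^h_{G_{\gr}}$) makes $kk^h_{G_{\gr}}$ triangulated and exhibits $j^h_{G_{\gr}}=P\circ j^h_{\GgrAlg}$ as a $G$-stable, $M_\fX$-stable, $\iota_+$-stable, homotopy invariant excisive homology theory, its boundary maps being the images under $P$ of those of $j^h_{\GgrAlg}$.

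Next I would prove initiality. Given a theory $H\colon\GgrAlg\to\cat{C}$ with the four stability properties, the universal property of $j^h_{\GgrAlg}$ yields a unique triangle functor $\overline{H}\colon kk^h_{\GgrAlg}\to\cat{C}$, compatible with boundary maps, such that $\overline{H}\circ j^h_{\GgrAlg}=H$. Since $H$ is $G$-stable, the corollary following Remark \ref{rmk:natisos} gives a natural isomorphism $H\cong H\circ M_G$, whence $\overline{H}\circ\widehat{M}_G\circ j^h_{\GgrAlg}=H\circ M_G\cong H=\overline{H}\circ j^h_{\GgrAlg}$ and therefore $\overline{H}\circ\widehat{M}_G\cong\overline{H}$ by uniqueness; moreover $\overline{H}$ inverts each $j^h_{\GgrAlg}(\iota_1)$, because $H$ does by Remark \ref{rmk:natisos}. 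These are exactly the conditions needed for $\overline{H}$ to descend along $P$: there is a unique triangle functor $\widetilde{H}\colon kk^h_{G_{\gr}}\to\cat{C}$, compatible with boundary maps, with $\widetilde{H}\circ P=\overline{H}$, and then $\widetilde{H}\circ j^h_{G_{\gr}}=\overline{H}\circ j^h_{\GgrAlg}=H$. Uniqueness of $\widetilde{H}$ follows, as in \cite{kkg}*{Theorem 4.2.1}, from the uniqueness clause of the universal property of $j^h_{\GgrAlg}$ applied to $\widetilde{H}\circ P$, together with the facts that $P$ is the identity on objects and that every morphism of $kk^h_{G_{\gr}}$ is obtained from morphisms in the image of $P$ by composing with the isomorphisms $j^h_{G_{\gr}}(\iota_1)$ (which are invertible in $kk^h_{G_{\gr}}$ precisely because $j^h_{G_{\gr}}$ is $G$-stable). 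Hence a boundary-compatible triangle functor out of $kk^h_{G_{\gr}}$ is pinned down by its restriction along $P$ and its values on the $j^h_{G_{\gr}}(\iota_1)$, both determined by the equation $\widetilde{H}\circ j^h_{G_{\gr}}=H$.

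The homotopy-invariance and excision parts of the argument are immediate consequences of the corresponding properties of $j^h_{\GgrAlg}$ and the exactness of $M_G$, and the one genuinely hermitian ingredient — the appearance of the $\eps$-hermitian correction terms $\phi,\psi$ in the definition of $G$-stability — has already been dealt with in Proposition \ref{prop:stab}. I therefore expect the main technical obstacle to be the bookkeeping involved in verifying that $P$ is a triangle functor for the transported structure and that $\overline{H}$ descends through it compatibly with boundary maps, i.e. the precise identification of $kk^h_{G_{\gr}}$ as the $G$-stabilization of $kk^h_{\GgrAlg}$; this is the step of \cite{kkg}*{Theorem 4.2.1} that is most delicate to reproduce and where care with the triangulated structure is required.
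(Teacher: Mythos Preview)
Your proposal is correct and follows exactly the approach the paper indicates: the paper gives no explicit proof but states just before the theorem that it ``can be adapted from \cite{kkg}*{Theorem 4.1.1, Theorem 4.2.1} by means of Proposition \ref{prop:stab} and the universal property of $j_{\fA}^h$,'' which is precisely what you carry out in detail.
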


\section{Adjointness theorems}
\label{sec:adj}

In this section we show that the adjointness theorems of 
\cite{kkg} that relate algebraic 
bivariant $K$-theory with its equivariant and homogenous counterparts can be 
extended to the hermitian context. 

Composing the crossed product functors \eqref{fun:crossed} with the canonical functors to each 
of the bivariant $K$-theory categories and using their universal properties we obtain functors
\[
    - \rtimes G \colon kk^h_G \longleftrightarrow kk_{G_{\gr}}^h \colon G \hltimes -.
\]
By Proposition \ref{prop:impri} the composite functors $G \hltimes(- \rtimes G)$ and $(G\hltimes-)\rtimes G $
are naturally isomorphic to the endofunctors $M_G(-) \colon \GAlg \to \GAlg$ and
$M_G(-) \colon \GgrAlg \to \GgrAlg$ respectively. Consequently, 
the algebraic analogue of Baaj-Skandalis' duality proved in \cite{kkg} extends to
the hermitian setting as follows.

\begin{thm}[{cf. \cite{kkg}*{Theorem 7.6}}] \label{thm:baaj-skandalis}
The functors $- \rtimes G$ and $G \hltimes -$ extend to inverse triangle equivalences
\begin{center}
    \begin{tikzcd}
       kk^h_G \arrow[bend left=15]{rr}{- \rtimes G}
      &
      & kk_{G_{\gr}}^h \arrow[bend left=15]{ll}{G \hltimes -}
    \end{tikzcd}.
\end{center}
\qed
\end{thm}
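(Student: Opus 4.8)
The plan is to reduce the theorem to the universal property of $j^h_G$ and $j^h_{G_\gr}$ (Theorems \ref{thm:kkgeq} and \ref{thm:kkggr}) exactly as in the non-hermitian case \cite{kkg}*{Theorem 7.6}, checking at each stage that all constructions respect the involutions. First I would observe that the crossed-product functors $-\rtimes G\colon \GAlg\to\GgrAlg$ and $G\hltimes-\colon\GgrAlg\to\GAlg$ of \eqref{fun:crossed} are $\ast$-functors, and that for each weakly split extension in one category the image under the crossed product is again a weakly split extension: the crossed product is exact (it is the base change $-\otimes_\ell\ell[G]$ resp.\ $\ell^{(G)}\otimes_\ell-$ on underlying modules) and a $U$-section is transported to a $U$-section because the forgetful functors commute with the crossed products. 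Hence the composites $j^h_{G_\gr}\circ(-\rtimes G)\colon\GAlg\to kk^h_{G_\gr}$ and $j^h_G\circ(G\hltimes-)\colon\GgrAlg\to kk^h_G$ are excisive homology theories.

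Next I would verify that these two composite homology theories possess the four stability properties required by Theorems \ref{thm:kkgeq} and \ref{thm:kkggr}, namely $M_\fX$-stability, $\iota_+$-stability, homotopy invariance, and $G$-stability. Homotopy invariance is immediate since the crossed product commutes with $-\otimes_\Z\Z[t]$. The matricial and $\iota_+$-stability reduce to the corresponding properties of $j^h_{G_\gr}$ (resp.\ $j^h_G$) together with natural $\ast$-isomorphisms $(M_XA)\rtimes G\cong M_X(A\rtimes G)$ and $(M_\pm A)^{h_\pm}\rtimes G\cong M_\pm(A\rtimes G)$ (and their analogues for $G\hltimes-$), which follow from Lemma \ref{lema:triv-ac-gr} and a direct check that the isomorphisms there are $\ast$-homomorphisms. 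For $G$-stability one uses Proposition \ref{prop:G-stab-no-her}: since the target is already $\iota_+$-stable, it suffices to treat the inclusions $M_XA\hookrightarrow M_{X\sqcup Y}A$, and again $(M_XA)\rtimes G\cong M_{|X|}(A\rtimes G)$ (coming from Lemma \ref{lema:triv-ac-gr}(i)) reduces this to matricial stability of $j^h_{G_\gr}$; the graded case is symmetric via Lemma \ref{lema:triv-ac-gr}(ii).

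By the universal properties (Theorems \ref{thm:kkgeq}, \ref{thm:kkggr}), the composites therefore factor uniquely through $j^h_G$ and $j^h_{G_\gr}$ by triangle functors, which are exactly the induced functors $-\rtimes G\colon kk^h_G\to kk^h_{G_\gr}$ and $G\hltimes-\colon kk^h_{G_\gr}\to kk^h_G$ mentioned before the statement. It remains to see these are mutually inverse. For this I would invoke Proposition \ref{prop:impri}: there are natural $\ast$-isomorphisms $G\hltimes(A\rtimes G)\cong M_GA$ and $(G\hltimes B)\rtimes G\cong M_GB$. Since $j^h_G$ and $j^h_{G_\gr}$ are $M_G$-stable (indeed $G$-stable, so $j^h_G(\iota_1)$ and the zig-zag \eqref{mor:zigzag-MGA} become isomorphisms, cf.\ Remark \ref{rmk:natisos}), the endofunctors $M_G(-)$ on $kk^h_G$ and on $kk^h_{G_\gr}$ are naturally isomorphic to the identity; composing with the above natural isomorphisms gives natural isomorphisms $(G\hltimes-)\circ(-\rtimes G)\cong\id_{kk^h_G}$ and $(-\rtimes G)\circ(G\hltimes-)\cong\id_{kk^h_{G_\gr}}$, so both functors are triangle equivalences.

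The routine verifications — that the crossed product isomorphisms of Lemma \ref{lema:triv-ac-gr} and Proposition \ref{prop:impri} are compatible with the involutions, and that weak splittings are preserved — are the bulk of the work but are straightforward; the only point requiring genuine care is checking that the natural isomorphism $M_G(-)\cong\id$ on each $kk^h$-category is compatible with the Baaj--Skandalis isomorphisms of Proposition \ref{prop:impri} so that the two triangulated natural isomorphisms assemble into an adjoint (indeed inverse) equivalence rather than merely objectwise isomorphisms, but this is handled exactly as in \cite{kkg} once one knows the relevant diagrams of $\ast$-algebras commute.
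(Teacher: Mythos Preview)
Your proposal is correct and follows essentially the same route as the paper, which is itself very terse: the paragraph preceding the theorem states that the crossed-product functors descend to $kk^h$ via the universal properties and that Proposition~\ref{prop:impri} identifies both composites with $M_G(-)$, which is naturally isomorphic to the identity by $G$-stability; the remaining verifications are deferred to \cite{kkg} with the implicit understanding that one only has to check that the relevant maps are $\ast$-homomorphisms. One small correction: Lemma~\ref{lema:triv-ac-gr}(i) gives the $G$-$\ast$-algebra isomorphism $M_G M_X A\cong M_{|X|}M_G A$, not the $G$-graded isomorphism $(M_XA)\rtimes G\cong M_{|X|}(A\rtimes G)$ you invoke for $G$-stability; the latter does hold (for instance via $(\elmat_{x,y}\otimes a)\rtimes g\mapsto \elmat_{x,g^{-1}y}\otimes(a\rtimes g)$), but it is a separate check carried out in \cite{kkg} rather than a consequence of Lemma~\ref{lema:triv-ac-gr}.
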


The next adjointness result concerns induction and restriction. Given a subgroup $H \leq G$
and a $G$-$\ast$-algebra $A$ we will write $\res^G_H(A)$ for the $H$-$\ast$-algebra 
resulting from restricting the $G$-action on $A$ to an $H$-action. Write 
$\pi \colon G \to G/H$ for the canonical quotient function, and set
\[
    \ind^G_H(A) = \{f \in A^{(G)} : |\pi(\supp(f))| < \infty, \ f(g) = h \cdot f(gh) 
    \ (\forall g \in G, h \in H) \}.
\]
This is a $G$-$\ast$-algebra with product given by pointwise multiplication, 
and involution and action defined by $f^\ast(s) := f(s)^\ast$ and 
$g \cdot f(s) = f(g^{-1}s)$ for each $f \in \ind^G_H(A)$ 
and $g,s \in G$ respectively. 
We also define $\ind_H^G(\varphi)(f) = \varphi\circ f$ for each $H$-$\ast$-algebra 
morphism $\varphi \colon A \to B$.

\begin{ex} If $A$ is a $G$-$\ast$-algebra such that 
the restricted $H$-action is trivial, then
\[
    \ind^G_H\res^G_H(A) \cong A^{(G/H)}.    
\]  
Notice, in particular, that 
\begin{equation}\label{eq:ind=cross}
\ind_1^G\res_{1}^G(\ell)\cong \ell^{(G)}=G\hltimes\ell
\end{equation}
\end{ex}

Both $\res^G_H$ and $\ind^G_H$ extend to triangulated functors
\[
\res^G_H \colon kk^h_G \longleftrightarrow kk^h_H \colon \ind^G_H.    
\]
The following theorem can be adapted from \cite{kkg} 
by verifying that the algebra maps involved
preserve involutions.

\begin{thm}[{cf. \cite{kkg}*{Theorem 6.1.4}}] \label{thm:ind-res}
For each subgroup $H \leq G$ there exists an adjunction
\begin{center}
    \begin{tikzcd}
       kk^h_H \arrow[bend left=15]{rr}{\ind_H^G}
      & \rotatebox{90}{$\vdash$}
      & kk^h_G \arrow[bend left=15]{ll}{\res^G_H}
    \end{tikzcd}.
\end{center}
\qed
\end{thm}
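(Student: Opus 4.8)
The plan is to transplant the proof of \cite{kkg}*{Theorem 6.1.4} to the hermitian setting, the only real task being to check that every algebra homomorphism appearing in the unit and counit of the adjunction is compatible with the involutions, so that it descends to a morphism in $kk^h_H$ or $kk^h_G$. First I would recall from \emph{loc.\ cit.} that the adjunction is witnessed, after composing with the universal functors $j^h_H$ and $j^h_G$, by natural transformations $\eta_A\colon A\to\res^G_H\ind^G_H(A)$ and $\varepsilon_B\colon \ind^G_H\res^G_H(B)\to B$ which at the algebra level are concrete: $\eta_A$ sends $a\in A$ to the function supported on $H$ given by $h\mapsto h\cdot a$ (using that $\res^G_H\ind^G_H(A)$ contains this as an $H$-invariant corner), while $\varepsilon_B$ is an inclusion of $\ind^G_H\res^G_H(B)$, realised inside a matrix algebra over $B$, followed by projection onto a suitable corner. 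Then I would verify that $\eta_A$ is an $\ast$-homomorphism: since $f^\ast(s)=f(s)^\ast$ in $\ind^G_H(A)$ and the $G$-action is by $\ast$-automorphisms, $(\eta_A(a))^\ast$ is the function $h\mapsto (h\cdot a)^\ast=h\cdot a^\ast$, which is exactly $\eta_A(a^\ast)$. The analogous check for $\varepsilon_B$ follows because the matrix realisations of $\ind^G_H$ used in \cite{kkg} are the ones intertwining the canonical involutions, as already exploited in the proof of Proposition \ref{prop:impri}.

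Next I would confirm that the triangle identities $(\varepsilon\ind^G_H)\circ(\ind^G_H\eta)=\id$ and $(\res^G_H\varepsilon)\circ(\eta\res^G_H)=\id$, which are known to hold in $kk_H$ and $kk_G$ by \cite{kkg}, continue to hold in $kk^h_H$ and $kk^h_G$. This is automatic once one knows that the relevant composites of $\ast$-homomorphisms agree, up to the $M_G$- and $G$-stability isomorphisms built into the definition of the $kk^h$-categories, with the composites used in the non-hermitian case: the forgetful functor $kk^h_G\to kk_G$ is faithful enough on the morphisms in question (they come from honest algebra maps) that an identity valid downstairs, together with the explicit homotopies and stability isomorphisms of \cite{kkg}*{proof of Theorem 6.1.4} reinterpreted via Proposition \ref{prop:stab} and Theorems \ref{thm:kkgeq}, lifts verbatim. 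Concretely, each homotopy used in \emph{loc.\ cit.} is given by a polynomial family of algebra homomorphisms, and one checks these are $\ast$-homomorphisms by the same pointwise computation as for $\eta_A$; each stabilisation isomorphism is an instance of $G$-stability, which $j^h_G$ and $j^h_{\GAlg}$ satisfy by construction.

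The step I expect to be the main obstacle is bookkeeping rather than conceptual: making sure that the corner inclusions and matrix identifications used to define $\ind^G_H$ and to prove the triangle identities are chosen so that the $\eps$-hermitian forms involved are the standard ones (i.e. that one never needs a nontrivial $\eps$ or a twist by a non-homogeneous unitary), so that $G$-stability of $j^h_{\GAlg}$ in the sense of the definition preceding Proposition \ref{prop:G-stab-no-her} genuinely applies. Once the relevant inclusions are seen to land in $M_X A$ with the trivial form — which is the case because induction from $H$ only ever permutes $G/H$-indexed blocks and conjugates by the $G$-action, both of which preserve the canonical involution — the argument goes through with no further input. I would therefore organise the write-up as: (1) recall $\eta$, $\varepsilon$ explicitly; (2) check each is a $\ast$-morphism; (3) invoke $G$-stability of $j^h_{\GAlg}$, via Proposition \ref{prop:stab}, to see the triangle identities hold in $kk^h$; (4) conclude by the universal property of $j^h_G$ and $j^h_H$.
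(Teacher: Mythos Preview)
Your proposal is correct and takes essentially the same approach as the paper: the paper's entire proof is the sentence ``The following theorem can be adapted from \cite{kkg} by verifying that the algebra maps involved preserve involutions,'' and your plan is precisely a detailed unpacking of that verification. Your elaboration on the unit, counit, and triangle identities is more explicit than what the paper records, but it is exactly the intended content of that one-line justification.
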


\begin{rmk} \label{rmk:res-unit} The proof of \cite{kkg}*{Theorem 6.1.4} makes clear that 
when applying Theorem \ref{thm:ind-res} with $H = \{1\}$, the map 
$kk_G^h(A^{(G)}, B) \overset{\cong}{\lra} kk^h(A,B)$ is induced by 
forgetting the $G$-action and precomposing with the inclusion
$\iota_1 \colon a \in A \mapsto a \cdot \chi_1 \in  A^{(G)}$.
\end{rmk}
In the next corollary and elsewhere we write $KH_*^h$ for homotopy hermitian $K$-theory, as defined in \cite{kkh}*{Section 3}.

\begin{coro} \label{coro:kkhgr-ell=khcross} For each $G$-graded $\ast$-algebra $A$, 
there is an isomorphism of abelian groups 
\[
kk_{G_{\gr}}^h(\ell, A) \cong KH_0^{h,G_{\gr}}(A).
\]
\end{coro}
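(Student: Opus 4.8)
The plan is to obtain the isomorphism as a formal consequence of the adjointness results just established, reducing everything to the non-equivariant computation of $kk^h(\ell,-)$ from \cite{kkh}. First I would invoke Baaj--Skandalis duality (Theorem \ref{thm:baaj-skandalis}): since $G\hltimes-\colon kk^h_{G_{\gr}}\to kk^h_G$ is an equivalence of categories, it yields a bijection
\[
kk^h_{G_{\gr}}(\ell,A)\;\cong\;kk^h_G\bigl(G\hltimes\ell,\,G\hltimes A\bigr).
\]
Here $\ell$ carries the trivial $G$-grading, so by \eqref{eq:ind=cross} we may identify $G\hltimes\ell\cong\ell^{(G)}\cong\ind_1^G\res_1^G(\ell)$ as $G$-$\ast$-algebras.

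Next I would apply the induction--restriction adjunction of Theorem \ref{thm:ind-res} with $H=\{1\}$ to the left variable, obtaining
\[
kk^h_G\bigl(\ind_1^G\res_1^G(\ell),\,G\hltimes A\bigr)\;\cong\;kk^h\bigl(\ell,\,\res_1^G(G\hltimes A)\bigr),
\]
where $\res_1^G(G\hltimes A)$ is simply $G\hltimes A$ regarded as an ordinary $\ast$-algebra by forgetting the $G$-action; by Remark \ref{rmk:res-unit} this isomorphism is the one induced by forgetting the action and precomposing with $\iota_1$. We have thus landed in the non-equivariant category $kk^h$ of \cite{kkh}, where $kk^h(\ell,-)$ is identified with homotopy hermitian $K$-theory in degree zero, i.e. $kk^h(\ell,\,G\hltimes A)\cong KH^h_0(G\hltimes A)$ \cite{kkh}. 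Finally \eqref{eq:khgr} gives $KH^h_0(G\hltimes A)=KH^{h,G_{\gr}}_0(A)$, and composing the displayed isomorphisms (each of which is natural in $A$) proves the corollary.

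No individual step requires a new construction or estimate; the points to verify are that the equivalence of Theorem \ref{thm:baaj-skandalis} and the adjunction of Theorem \ref{thm:ind-res} are available for the hermitian categories with the descriptions recorded there and in Remark \ref{rmk:res-unit}, and that the identification $kk^h(\ell,-)\cong KH^h_0(-)$ from \cite{kkh} is in force. The only mild bookkeeping obstacle is making sure the crossed-product, induction and forgetful functors are matched up correctly on objects so that the chain of isomorphisms genuinely composes; this is immediate once $G\hltimes\ell\cong\ind_1^G\res_1^G(\ell)$ is recognized via \eqref{eq:ind=cross}.
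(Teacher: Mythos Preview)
Your proof is correct and follows essentially the same route as the paper's own argument: Baaj--Skandalis duality, the identification \eqref{eq:ind=cross}, the induction--restriction adjunction of Theorem \ref{thm:ind-res} with $H=\{1\}$, the agreement $kk^h(\ell,-)\cong KH^h_0$ from \cite{kkh}, and finally \eqref{eq:khgr}. The only difference is that you spell out the intermediate identifications (and invoke Remark \ref{rmk:res-unit}) a bit more explicitly than the paper does.
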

\begin{proof}
We have the following sequence of natural isomorphisms, where we have used Theorem \ref{thm:baaj-skandalis} at the first step, \eqref{eq:ind=cross} and Theorem \ref{thm:ind-res} at the second, the natural isomorphism $kk^h(\ell,-)\cong KH_0^h$ of \cite{kkh}*{Proposition 8.1} at the third, and \eqref{eq:khgr} at the last.
\[
kk_{G_{\gr}}^h(\ell, A)\cong kk_G^h(\ell^{(G)}, G\hltimes A)\cong kk^h(\ell,G\hltimes A)\cong KH^h_0(G\hltimes A)=KH_0^{h,G_{\gr}}(A).
\]
\end{proof} 
The groups $KH^{h,G_{\gr}}_n(A)$ for $n\ne 0$ 
can be interpreted as $kk^h_{G_{\gr}}(\ell,-)$
applied to $\Omega^nA$ for $n>0$ and to the iterated Karoubi suspension 
$\Sigma^{-n}A$ if $n<0$, see \cite{kk}*{Subsection 4.7} and \cite{kk}*{Corollary 6.4.2}.

\begin{rmk}
\label{rmk:adj} Both the 
algebraic analogue of the Green-Julg theorem 
proved in \cite{kkg} and the bivariant version 
of Karoubi's $12$-term exact sequence given in \cite{kkh}
can be adapted to the present context. As in the theorems 
above, the proofs consist in verifying that all 
algebras and maps involved preserve the extra structure 
considered.
\end{rmk}
\section{The coefficient ring \topdf{$kk^h_{G_{\gr}}(\ell,\ell)$}{kkhGgr(l,l)} and \topdf{$G$}{G}-action on graded \topdf{$K$}{K}-theory}
\label{sec:enrich}
We have the following chain of isomorphisms of abelian groups, where we use agreement between $KH^h_0$ and $kk^h(\ell,-)$ \cite{kkh}*{Proposition 8.1} for the first and third isomorphisms, additivity of $KH_0^h$ for the second,  Theorem \ref{thm:baaj-skandalis} for the fourth and identity \eqref{eq:ind=cross} and Theorem \ref{thm:ind-res} for the last 
\begin{gather}\label{map:isopp}
\Z[G]\otimes kk^h(\ell,\ell)\cong \Z[G]\otimes KH_0^h(\ell)=\bigoplus_{g\in G}\Z g\otimes KH_0^h(\ell)\\ \cong KH_0^h(\ell^{(G)})
\cong kk^h(\ell,\ell^{(G)})\cong kk^{h}_{G}(\ell^{(G)},\ell^{(G)})\cong kk^h_{G_{\gr}}(\ell,\ell).\nonumber
\end{gather}
Observe that both the domain and codomain of the composite isomorphism are rings; the next proposition says that the map is an anti-homomorphism. 
\begin{prop} 
\label{prop:end-el}    
The composite \eqref{map:isopp} is a ring isomorphism
\begin{equation}\label{eq:iso-kkhgr-ell}
    \Z[G^{\op}] \otimes  kk^h(\ell,\ell) \overset{\sim}{\lra}kk^h_{G_{\gr}}(\ell, \ell),
\end{equation}
and maps
\begin{equation}\label{map:elg}
g\otimes j^h_{G_{\gr}}(\id_\ell)\mapsto m_g:=j^h_{G_{\gr}}(\iota_1)^{-1}\circ j^h_{G_{\gr}}(\iota_g).
\end{equation}
\end{prop}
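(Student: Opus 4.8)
The plan is to verify that the composite isomorphism \eqref{map:isopp} is a ring anti-homomorphism and to track the image of $g \otimes j^h_{G_{\gr}}(\id_\ell)$ through the chain of isomorphisms. First I would address the multiplicativity claim. Each map in \eqref{map:isopp} is an isomorphism of abelian groups, and both the source $\Z[G]\otimes kk^h(\ell,\ell)$ and the target $kk^h_{G_{\gr}}(\ell,\ell)$ carry natural ring structures: on the source it is the tensor product of the group ring with the (commutative) coefficient ring $kk^h(\ell,\ell)$, and on the target it is composition in the category $kk^h_{G_{\gr}}$. The fourth and fifth isomorphisms of \eqref{map:isopp} are induced by the equivalences of Theorems \ref{thm:baaj-skandalis} and \ref{thm:ind-res}, hence are automatically compatible with composition; so the only thing to check is that the composite of the first three isomorphisms, sending $\Z[G]\otimes kk^h(\ell,\ell)$ to $kk^h(\ell,\ell^{(G)})\cong KH_0^h(\ell^{(G)})$, together with the ring structure on $\ell^{(G)}=G\hltimes\ell$ and the resulting composition product on $kk^h(\ell,\ell^{(G)})$, carries the product of $\Z[G]\otimes kk^h(\ell,\ell)$ to this product \emph{with the order of the $\Z[G]$ factors reversed}. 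Concretely, $\ell^{(G)}$ has $\ell$-basis $\{\chi_g\}_{g\in G}$ with $\chi_g\chi_h=\delta_{g,h}\chi_g$ as a ring, but as a $G$-$\ast$-algebra it is $G\hltimes\ell$ with $G$ acting by $h\cdot\chi_g=\chi_{hg}$; the point is that under the identification $kk_G^h(\ell^{(G)},\ell^{(G)})\cong kk_{G_{\gr}}^h(\ell,\ell)$ the $G$-equivariant structure contributes the opposite group multiplication. This is the step I expect to be the main obstacle: one has to pin down exactly which of the several natural identifications of $\ell^{(G)}$ with a crossed product is in play, and carefully compute how the product $\chi_g\cdot\chi_h$ of idempotents interacts with the $G$-equivariant maps $m_g$ so as to produce $g^{\op}h^{\op}=(hg)^{\op}$ rather than $gh$. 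I would carry this out by reducing to the universal case: since $KH_0^h(\ell)\cong kk^h(\ell,\ell)$ acts centrally, it suffices to treat the group-like generators $g\otimes j^h_{G_{\gr}}(\id_\ell)$, and then the product of two such in the target is, by Remark \ref{rmk:res-unit} and the explicit form of the Baaj--Skandalis equivalence, the composition of the two corresponding corner-inclusion classes.

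Second, for the formula \eqref{map:elg} I would trace $g\otimes j^h_{G_{\gr}}(\id_\ell)$ step by step. Under the first three isomorphisms of \eqref{map:isopp} it goes to the class in $kk^h(\ell,\ell^{(G)})$ of the map $\iota_g\colon \ell\to\ell^{(G)}$, $1\mapsto\chi_g$ (this is what ``$\Z g\otimes KH_0^h(\ell)\to KH_0^h(\ell^{(G)})$'' records: the $g$-th coordinate inclusion). Under the fifth isomorphism $kk_G^h(\ell^{(G)},\ell^{(G)})\cong kk_{G_{\gr}}^h(\ell,\ell)$ coming from Theorem \ref{thm:ind-res} with $H=\{1\}$, Remark \ref{rmk:res-unit} tells us precisely that this identification is induced by forgetting the $G$-action and precomposing with $\iota_1\colon\ell\to\ell^{(G)}$. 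Chasing, the class of $\iota_g$ in $kk_G^h(\ell^{(G)},\ell^{(G)})$ corresponds in $kk^h(\ell,\ell^{(G)})$ to the composite $\iota_g\circ\iota_1=\iota_g$ again — so to land in $kk^h_{G_{\gr}}(\ell,\ell)=kk^h_{G_{\gr}}(M_G\ell, M_G\ell)$ I must further apply the Baaj--Skandalis equivalence $G\hltimes(-)$ and identify the result using that $G\hltimes(\ell\rtimes G)\cong M_G\ell$ by Proposition \ref{prop:impri}. The upshot is that the class of $\iota_g\colon\ell\to\ell^{(G)}=G\hltimes\ell$, viewed inside $kk^h_{G_{\gr}}(\ell,\ell)$, becomes $j^h_{G_{\gr}}(\iota_1)^{-1}\circ j^h_{G_{\gr}}(\iota_g)$, where now $\iota_1,\iota_g\colon\ell\to M_G\ell$ are the corner inclusions $1\mapsto\elmat_{1,1}$ and $1\mapsto\elmat_{g,g}$ respectively — these being the images of the degree-$1$ and degree-$g$ graded-set-point inclusions $\{1\}\hookrightarrow G$ and $\{g\}\hookrightarrow G$ under $\iota_1$ of \eqref{mor:iota1}, which by $G$-stability become isomorphisms after applying $j^h_{G_{\gr}}$. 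This identifies $m_g$ as claimed.

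Finally I would assemble the two parts: the anti-multiplicativity established in the first step shows \eqref{eq:iso-kkhgr-ell} is a ring isomorphism $\Z[G^{\op}]\otimes kk^h(\ell,\ell)\iso kk^h_{G_{\gr}}(\ell,\ell)$, and a short check using \eqref{map:elg} confirms consistency: $m_g\circ m_h$ should equal $m_{hg}$, matching $g^{\op}\cdot h^{\op}=(hg)^{\op}$. Indeed $m_g\circ m_h=j^h_{G_{\gr}}(\iota_1)^{-1}j^h_{G_{\gr}}(\iota_g)j^h_{G_{\gr}}(\iota_1)^{-1}j^h_{G_{\gr}}(\iota_h)$, and a direct computation in $M_G\ell$ — using that the conjugating isomorphism which intertwines the $g$-corner with the $1$-corner shifts the $h$-corner to the $hg$-corner — shows this collapses to $j^h_{G_{\gr}}(\iota_1)^{-1}j^h_{G_{\gr}}(\iota_{hg})=m_{hg}$. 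This last verification is where the ``opposite group'' appears concretely, so in the write-up I would place it immediately after \eqref{map:elg} as the crux of the anti-homomorphism claim, thereby making the first paragraph's argument self-contained.
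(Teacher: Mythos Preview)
Your overall strategy—trace $g\otimes[1]$ through the chain and verify the composition law on the resulting elements—is sound, but there is a genuine gap in the first paragraph, and the paper's route avoids it more cleanly.

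The claim that the ind-res step is ``automatically compatible with composition'' because it comes from Theorem~\ref{thm:ind-res} is not justified: that theorem gives an \emph{adjunction}, not an equivalence, and the adjunction bijection $kk^h(\ell,\ell^{(G)})\cong kk^h_G(\ell^{(G)},\ell^{(G)})$ is not a priori a ring map—indeed $kk^h(\ell,\ell^{(G)})$ carries no canonical ring structure at all, so your proposed reduction to ``the first three isomorphisms'' does not type-check. (You also have the labels swapped: the passage $kk^h_G(\ell^{(G)},\ell^{(G)})\cong kk^h_{G_{\gr}}(\ell,\ell)$ is Baaj--Skandalis, not ind-res.) Only the Baaj--Skandalis step, being an equivalence of categories, is automatically a ring isomorphism on endomorphisms.

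The paper exploits exactly this: since the \emph{last} step is a functor, it suffices to show the composite into $kk^h_G(\ell^{(G)},\ell^{(G)})$ is a ring map from $\Z[G^{\op}]\otimes kk^h(\ell,\ell)$. The key observation—which you are missing—is that the image of $g\otimes[1]$ under the first four maps is the class of the honest $G$-equivariant $\ast$-algebra homomorphism
\[
\rho_g\colon \ell^{(G)}\to\ell^{(G)},\qquad \rho_g(\chi_s)=\chi_{sg},
\]
i.e.\ right translation by $g$. (This is what the ind-res adjunction does to $\iota_g$: by Remark~\ref{rmk:res-unit} the inverse of the adjunction precomposes with $\iota_1$, and $\rho_g\circ\iota_1=\iota_g$.) The anti-homomorphism property is then a one-line computation with actual maps, $\rho_g\circ\rho_h=\rho_{hg}$, and the image of $1\otimes\xi$ is $\ind_1^G(\xi)$, visibly multiplicative and commuting with the $\rho_g$. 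Formula~\eqref{map:elg} follows by applying $G\hltimes-$: under the imprimitivity isomorphism of Proposition~\ref{prop:impri}, $G\hltimes\rho_g$ becomes the right-shift $R_g\colon\elmat_{s,t}\mapsto\elmat_{sg,tg}$ on $M_G$, and $R_g\circ\iota_1=\iota_g$ gives $m_g=j^h_{G_{\gr}}(\iota_1)^{-1}j^h_{G_{\gr}}(\iota_g)$ directly.

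Your fallback—verifying $m_g\circ m_h=m_{hg}$ by a computation in $M_G\ell$—would also establish the result, but it is more work than recognizing $\rho_g$ one step earlier, and you should present it as the argument itself rather than as a ``consistency check'' of a first paragraph that, as written, does not stand on its own.
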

\begin{proof} 
The last isomorphism in the composite \eqref{map:isopp} comes from a linear functor, so it is a ring homomorphism. Hence it suffices to show that the composite of all maps but the last \begin{equation}\label{eq:invenrich}
\Z[G^{\op}] \otimes_\Z kk^h(\ell,\ell)\iso kk^h_G(\ell^{(G)},\ell^{(G)})
\end{equation}
is a ring homomorphism. This is immediate upon checking that \eqref{eq:invenrich} maps an elementary tensor $g \otimes [1]$ to the
$kk^h$ class of 
\begin{equation}\label{map:rogel}
\rho_g:\ell^{(G)}\to\ell^{(G)},\,\, \rho_g(\chi_s)=\chi_{sg},
 \end{equation}
and an elementary 
tensor $1 \otimes \xi$ with $\xi \in KH^h_0(\ell)=kk^h(\ell,\ell)$ to $\xi^{(G)} = \ind_{1}^G(\xi)$. 
Next, one checks, using the explicit formula of  \cite{kkg}*{Proposition 7.4}, that under the isomorphism
$G\hltimes\ell^{(G)}\cong M_G$ of Proposition \ref{prop:impri}, $G\hltimes\rho_g$ identifies with the graded isomorphism $R_g:M_G\to M_G$, $R_g(\elmat_{s,t})=\elmat_{sg,tg}$. Thus the isomorphism $kk_G^h(\ell^{(G)},\ell^{(G)}) \cong kk^h_{G_{\gr}}(\ell, \ell)$ sends 
the class of $\rho_g$  to $j^h_{G_{\gr}}(\iota_1)^{-1}j^h_{G_{\gr}}(R_g\iota_1)= j^h_{G_{\gr}}(\iota_1)^{-1}j^h_{G_{\gr}}(\iota_g)$. 
\end{proof}

It follows from Proposition \ref{prop:end-el} that we have a left action 
\[
\cdot:G\times kk^{h}_{G_{\gr}}(\ell,A)\to kk^h_{G_{\gr}}(\ell,A),\,\, g\cdot \xi=\xi\circ m_g.
\]
In addition, $G$ acts on $G\hltimes A$ by automorphisms; this gives a second action 
\[
\cdot':G\times kk^{h}(\ell,G\hltimes A)\to kk^h(\ell,G\hltimes A).
\]
Using Theorems \ref{thm:ind-res} and \ref{thm:baaj-skandalis} we obtain an isomorphism 
\begin{equation}\label{map:elmu}
\mu:kk^{h}_{G_{\gr}}(\ell,A)\iso kk^{h}(\ell,G\hltimes A)
\end{equation}
\begin{lem}\label{lem:cdot=cdot'}
The actions $\cdot$ and $\cdot'$ correspond to each other under the isomorphism \eqref{map:elmu}: we have $\mu(g\cdot \xi)=g\cdot'\mu(\xi)$ for all $g\in G$, $\xi\in kk^h_{G_{\gr}}(\ell,A)$.
\end{lem}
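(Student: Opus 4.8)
The plan is to trace both actions through the chain of natural isomorphisms that defines $\mu$, and to identify, at each stage, the automorphism of $G \hltimes A$ (or the appropriate model) that implements the respective $G$-action. Recall that $\mu$ is the composite
\[
kk^h_{G_{\gr}}(\ell, A) \overset{\sim}{\lra} kk^h_G(\ell^{(G)}, G \hltimes A) \overset{\sim}{\lra} kk^h(\ell, G \hltimes A),
\]
where the first isomorphism comes from the Baaj--Skandalis equivalence of Theorem \ref{thm:baaj-skandalis} and the second from Theorem \ref{thm:ind-res} with $H = \{1\}$ together with the identification \eqref{eq:ind=cross}. By Remark \ref{rmk:res-unit}, the second isomorphism is precomposition with $\iota_1 \colon \ell \to \ell^{(G)}$ after forgetting the $G$-action, so it is manifestly compatible with any endofunctorial operation on the second variable $G \hltimes A$; in particular $\cdot'$ on $kk^h(\ell, G\hltimes A)$ corresponds to the action on $kk^h_G(\ell^{(G)}, G\hltimes A)$ induced by postcomposition with $G\hltimes \alpha_g$, where $\alpha_g$ is the automorphism of $G \hltimes A$ from multiplication by $g$. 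So it remains to show that, under the Baaj--Skandalis equivalence $G \hltimes -$, postcomposition with $G \hltimes \alpha_g$ corresponds on the left-hand side to precomposition with $m_g$.

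The key computational step is the same one already carried out in the proof of Proposition \ref{prop:end-el}: the crossed product functor sends the right-translation map $\rho_g \colon \ell^{(G)} \to \ell^{(G)}$, $\rho_g(\chi_s) = \chi_{sg}$, to the graded isomorphism $R_g \colon M_G \to M_G$, $R_g(\elmat_{s,t}) = \elmat_{sg,tg}$, which under the identification $M_G = G \hltimes \ell^{(G)}$ of Proposition \ref{prop:impri} is precisely $G \hltimes \alpha_g$ applied to $\ell^{(G)}$ — i.e. $\rho_g$ and $\alpha_g$ correspond under Baaj--Skandalis. Since $m_g = j^h_{G_{\gr}}(\iota_1)^{-1} \circ j^h_{G_{\gr}}(\iota_g)$ is, by that same proof, the image of $j^h_G$ of $\rho_g$ under the composite $kk^h_G(\ell^{(G)},\ell^{(G)}) \cong kk^h_{G_{\gr}}(\ell,\ell)$, precomposition with $m_g$ on $kk^h_{G_{\gr}}(\ell, A)$ matches, under $G \hltimes -$, precomposition with $j^h_G(\rho_g)$ on $kk^h_G(\ell^{(G)}, G\hltimes A)$. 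But $\rho_g$ is a map of $\ell^{(G)}$, hence acts on the left variable; and by naturality of the coefficient action (or, equivalently, because $kk^h_G$ is a category and the isomorphism $G\hltimes-$ is a functor), precomposition with $j^h_G(\rho_g)$ on the left agrees with postcomposition with the functorially-induced $j^h_G(G\hltimes\alpha_g)$ on the right whenever these arise from the same element of the coefficient ring acting by the module structure — concretely, one checks that $G\hltimes$ applied to the self-map of $\ell^{(G)}$ given by $\rho_g$ and applied to $\alpha_g$ on a general $A$ are intertwined by the relevant naturality square, so that the induced self-map of $kk^h_G(\ell^{(G)}, G\hltimes A)$ is the same whether one composes on the source with $j^h_G(\rho_g)$ or on the target with $G\hltimes\alpha_g$.

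I expect the main obstacle to be bookkeeping rather than conceptual: one must be careful that the ``left action by precomposition with $m_g$'' on $kk^h_{G_{\gr}}(\ell, A)$ and the ``action by $G$-automorphisms of $G\hltimes A$'' are genuinely the same natural transformation after transporting along $\mu$, which requires checking that the coefficient element $m_g \in kk^h_{G_{\gr}}(\ell,\ell)$ corresponds, under $G\hltimes-$ and the adjunction isomorphism of Theorem \ref{thm:ind-res}, to the class of $G\hltimes\alpha_g$ acting as an endomorphism of $G\hltimes\ell = \ell^{(G)}$ in $kk^h$, and then invoking functoriality of the whole chain in the variable $A$. The cleanest way to organize this is to verify the identity first for $A = \ell$, where it is exactly the content of the formula \eqref{map:elg} in Proposition \ref{prop:end-el} together with the description of $\rho_g \leftrightarrow R_g = G\hltimes\alpha_g$, and then to extend to general $A$ by naturality of $\mu$, of $\cdot$, and of $\cdot'$ in $A$. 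No step requires a genuinely new idea beyond what is in Proposition \ref{prop:end-el}; the work is to assemble the commuting diagram.
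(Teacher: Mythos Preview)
Your high–level strategy — transport $m_g$ through $G\hltimes-$ to $\rho_g$ and then compare with the $\alpha_g$–action on $G\hltimes A$ — is sound, and the first reduction (that $G\hltimes m_g$ is the class of $\rho_g$) is exactly what the paper uses. But the argument breaks down at the step you flag as ``bookkeeping''. Your plan to ``verify the identity first for $A=\ell$ \dots\ and then extend to general $A$ by naturality of $\mu$, of $\cdot$, and of $\cdot'$ in $A$'' is a Yoneda argument, and for it to work you need both sides to be natural in $A$ with respect to arbitrary morphisms in $kk^h_{G_{\gr}}$, not merely with respect to graded $\ast$–homomorphisms. Naturality of $g\cdot\xi=\xi\circ m_g$ is automatic (precomposition at the source). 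Naturality of $g\cdot'\mu(\xi)$ is not: it amounts to the claim that for every $\eta\in kk^h_G(G\hltimes A,G\hltimes B)$ one has $\res_1^G(\eta)\circ j^h(\alpha_g)=j^h(\alpha_g)\circ\res_1^G(\eta)$ in $kk^h$. This is true for $\eta$ the image of a $G$–equivariant $\ast$–homomorphism (tautologically), but $\alpha_g$ is \emph{not} $G$–equivariant unless $G$ is abelian, so it does not define an endomorphism in $kk^h_G$ and there is no one-line categorical reason it commutes with an arbitrary $kk^h_G$–class after forgetting the action. Your sentence ``one checks that \dots\ are intertwined by the relevant naturality square'' is precisely the content of the lemma, not a step toward it.

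The paper does not attempt a Yoneda/naturality reduction. Instead it proves the identity $g\cdot'(G\hltimes\xi)=(G\hltimes\xi)\circ\rho_g$ by climbing up the concrete description of $kk^h_{G_{\gr}}(\ell,A)\cong KH^{h,G_{\gr}}_0(A)$: first for $\xi$ the class of a homogeneous projection in $M_GM_\infty A$ (where it is an elementary check), then for $\xi$ coming from a quasi-homomorphism, and finally for general $\xi$ via the colimit description of $KH^h$ and naturality along honest algebra maps. That is where the work is; your proposal skips it.
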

\begin{proof}
By the proof of Proposition \ref{prop:end-el}, $G\hltimes m_g$ is the class of
 the endomorphism $\rho_g$ of \eqref{map:rogel}. Hence it suffices to show that for $\xi\in kk^h_{G_{\gr}}(\ell,A)$, we have 
 \begin{equation}\label{eq:cdot}
 g\cdot'(G\hltimes \xi)=(G\hltimes \xi)\circ\rho_g.
 \end{equation}
 It is clear that \eqref{eq:cdot} holds whenever $\xi$ is the class of a homogeneous projection $p\in M_GM_\infty A$. From this one obtains that \eqref{eq:cdot} also holds when $\xi$ comes from a quasi-homomorphism $\ell\rightrightarrows M_GM_\infty \tilde{A}\vartriangleright M_GM_\infty \tilde{A}$ in $G_{\gr}-\ahas$. The general case follows from the latter case using Corollary \ref{coro:kkhgr-ell=khcross} together with \eqref{eq:khgr}, Definition \ref{def:khgr} and naturality. 
\end{proof}

By the same argument as in \cite{kkh}*{Lemma 6.2.13}, the tensor
 product of $G$-$\ast$-algebras extends to a 
product on the bivariant $K$-theory category $kk_{G}^h$. The graded case follows similarly when the group is abelian (see Remark \ref{rmk:tensor-gr}) and we have the following.

\begin{lem}\label{lem:tensograd} Let $G$ be an abelian group and let  $A_1, A_2, B_1,B_2\in G_{\gr}-\ahas$. There is a natural, associative, bilinear product 
\begin{gather*}
\otimes \colon kk_{G_{\gr}}^h(A_1, A_2) \times kk_{G_{\gr}}^h(B_1,B_2) \lra
kk_{G_{\gr}}^h(A_1 \otimes_\ell B_1, A_2 \otimes_\ell B_2),\\
\xi\otimes\eta:=(\xi\otimes B_2)\circ (A_1\otimes \eta).
\end{gather*}
which is compatible with composition and satisfies 
$j^h_{G_{\gr}}(f \otimes g) = j^h_{G_{\gr}}(f) \otimes j^h_{G_{\gr}}(g)$.
\qed
\end{lem}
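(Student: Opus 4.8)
The plan is to transport the argument of \cite{kkh}*{Lemma 6.2.13} to the graded category, with the universal property of Theorem~\ref{thm:kkggr} playing the role of the one used there. Since $G$ is abelian, Remark~\ref{rmk:tensor-gr} makes $-\otimes_\ell D$ an endofunctor of $G_{\gr}-\ahas$ for each fixed $D\in G_{\gr}-\ahas$. The first step is to check that the composite $j^h_{G_{\gr}}\circ(-\otimes_\ell D)$, equipped for every weakly split extension $E\colon 0\to A\to B\to C\to 0$ with the connecting map $\partial_{E\otimes_\ell D}$, is a $G$-stable, homotopy invariant, $M_\fX$-stable and $\iota_+$-stable excisive homology theory. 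These verifications rest on the natural isomorphisms $A[t]\otimes_\ell D\cong(A\otimes_\ell D)[t]$ and $M_X(A)\otimes_\ell D\cong M_X(A\otimes_\ell D)$ for $X\in\fS$ (taking $X=G$ and $X$ a two-point set also handles $G$-stability and $\iota_+$-stability), together with the fact that tensoring a weakly split extension with $D$ over $\ell$ leaves it weakly split exact, because such an extension splits as a sequence of $G$-graded $\ell$-modules and $-\otimes_\ell D$ preserves that splitting. The one point that is genuinely new compared with \cite{kkh} is that abelianness of $G$ is needed for these isomorphisms to be homogeneous and $\ast$-compatible: the degree of $\elmat_{x,y}\otimes a\otimes d$ matches that of $\elmat_{x,y}\otimes(a\otimes d)$ only because $|d|$ and $|y|^{-1}$ commute, and $(a\otimes d)^\ast=a^\ast\otimes d^\ast$ has the expected degree only because $|a|^{-1}|d|^{-1}=(|a||d|)^{-1}$.

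By Theorem~\ref{thm:kkggr} this homology theory factors uniquely through $j^h_{G_{\gr}}$ as a triangle endofunctor $-\otimes D$ of $kk^h_{G_{\gr}}$, and uniqueness of such factorizations makes the construction $2$-functorial in $D$: a $\ast$-homomorphism $D\to D'$, and more generally a natural transformation between the associated homology theories, induces a natural transformation $-\otimes D\Rightarrow-\otimes D'$, so in particular each $\phi\in kk^h_{G_{\gr}}(D,D')$ gives $-\otimes\phi$. Using the flip isomorphism $A\otimes_\ell B\cong B\otimes_\ell A$ --- which is a homogeneous $\ast$-isomorphism precisely because $G$ is abelian --- the symmetric construction produces the functors $A\otimes-$ and the classes $A\otimes\eta$ for $\eta\in kk^h_{G_{\gr}}(B_1,B_2)$. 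One then sets $\xi\otimes\eta:=(\xi\otimes B_2)\circ(A_1\otimes\eta)$; with this definition, bilinearity follows from the additivity of triangle functors on the morphism groups of $kk^h_{G_{\gr}}$, and $j^h_{G_{\gr}}(f\otimes g)=j^h_{G_{\gr}}(f)\otimes j^h_{G_{\gr}}(g)$ is immediate, since when $\xi=j^h_{G_{\gr}}(f)$ and $\eta=j^h_{G_{\gr}}(g)$ the factors $\xi\otimes B_2$ and $A_1\otimes\eta$ are $j^h_{G_{\gr}}(f\otimes B_2)$ and $j^h_{G_{\gr}}(A_1\otimes g)$.

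The technical heart, and the step I expect to require the most care, is the interchange law
\[
(\xi\otimes B_2)\circ(A_1\otimes\eta)=(A_2\otimes\eta)\circ(\xi\otimes B_1),
\]
which says precisely that $\{\xi\otimes B\}_B$ is a natural transformation of the triangle endofunctors $A_1\otimes-$ and $A_2\otimes-$ of $kk^h_{G_{\gr}}$; once it is available, well-definedness, associativity and compatibility with composition follow by the same formal diagram chases as in \cite{kkh} and \cite{kk}. When $\xi=j^h_{G_{\gr}}(f)$ the law is the naturality of $f\otimes-$ evaluated at $\eta$, and when $\eta=j^h_{G_{\gr}}(g)$ it is the naturality of $-\otimes g$ evaluated at $\xi$; both are supplied by the $2$-functoriality of the factorization above. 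For arbitrary $\xi$ and $\eta$ I would reduce, using these two special cases, to showing that for a fixed $\xi$ the family $\{\xi\otimes B\}_B$ is a morphism of excisive homology theories from $B\mapsto j^h_{G_{\gr}}(A_1\otimes B)$ to $B\mapsto j^h_{G_{\gr}}(A_2\otimes B)$ --- the only non-formal point being its compatibility with the connecting maps --- and then apply the uniqueness of factorizations once more to promote it to a natural transformation of triangle functors on $kk^h_{G_{\gr}}$, whose naturality at $\eta$ is the desired identity.
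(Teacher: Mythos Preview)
Your proposal is correct and follows exactly the approach the paper indicates: the paper gives no proof beyond the sentence preceding the lemma, ``By the same argument as in \cite{kkh}*{Lemma 6.2.13}\ldots\ The graded case follows similarly when the group is abelian (see Remark \ref{rmk:tensor-gr})'', together with the terminal \qed. What you have written is precisely a detailed unpacking of that argument, including the one genuinely new ingredient the paper flags --- that abelianness of $G$ is what makes the tensor product a $G$-graded $\ast$-algebra and the relevant identifications $M_X(A)\otimes_\ell D\cong M_X(A\otimes_\ell D)$ homogeneous.
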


\begin{prop}\label{prop:multiabel} Let $G$ be an abelian group, $g\in G$, $A,B\in G_{\gr}-\ahas$ and $\xi \in kk_{G_{\gr}}^h(A ,B)$. Omitting $j^h_{G_{\gr}}$, we have a commutative diagram in $kk_{G_{\gr}}^h$
$$\xymatrix{A\ar[r]^{\xi \otimes g}\ar[d]^{\iota_g}&B\\
M_G A \ar[r]^{\iota_1^{-1}} &A \ar[u]^{\xi} }$$
\end{prop}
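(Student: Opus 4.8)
The plan is to unwind both paths around the square using the multiplicative structure of $kk^h_{G_{\gr}}$ from Lemma \ref{lem:tensograd} and the description of the coefficient ring from Proposition \ref{prop:end-el}. The key observation is that the element $m_g \in kk^h_{G_{\gr}}(\ell,\ell)$ of \eqref{map:elg} equals $j^h_{G_{\gr}}(\iota_1)^{-1}\circ j^h_{G_{\gr}}(\iota_g)$, where $\iota_g\colon \ell \to M_G$ is the inclusion into the $(g,g)$-diagonal entry; under the ring isomorphism \eqref{eq:iso-kkhgr-ell} it corresponds to the generator $g\in\Z[G^{\op}]$. Thus the bottom-left composite $\iota_1^{-1}\circ\iota_g$ of the diagram (again omitting $j^h_{G_{\gr}}$) is exactly $m_g$, now viewed via $kk^h_{G_{\gr}}(\ell,\ell)$ acting on the left factor. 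So the claim amounts to the identity $\xi\circ m_g = \xi\otimes g$ in $kk^h_{G_{\gr}}(A,B)$, once we interpret $m_g\in kk^h_{G_{\gr}}(A,A)$ as $\id_A\otimes m_g$ under the identification $A\otimes_\ell\ell = A$.

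First I would make precise how $m_g$ acts as an endomorphism of $A$: by Lemma \ref{lem:tensograd} with $A_1=A_2=A$, $B_1=B_2=\ell$, the product sends $\id_A\otimes m_g$ to an element of $kk^h_{G_{\gr}}(A\otimes_\ell\ell, A\otimes_\ell\ell)=kk^h_{G_{\gr}}(A,A)$; I would check that under the canonical isomorphism $A\otimes_\ell\ell\cong A$ this is the composite $j^h_{G_{\gr}}(\iota_1^{A})^{-1}\circ j^h_{G_{\gr}}(\iota_g^{A})$, where $\iota_g^A\colon A\to M_G A$ is the $(g,g)$-entry inclusion — this is just compatibility of $M_G(-)$ with tensoring by $\ell$ and the fact that $\iota_g = \id_\ell\otimes\iota_g$ is respected by $j^h_{G_{\gr}}$ on morphisms. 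Granting this, the bottom-and-left path of the square literally reads $\xi\circ(\id_A\otimes m_g)$, i.e. $\xi\circ(j^h_{G_{\gr}}(\iota_1^A)^{-1}\circ j^h_{G_{\gr}}(\iota_g^A))$.

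Next, to identify $\xi\circ(\id_A\otimes m_g)$ with $\xi\otimes g$, I would use the definition $\xi\otimes\eta = (\xi\otimes B_2)\circ(A_1\otimes\eta)$ from Lemma \ref{lem:tensograd} with $\eta = m_g\in kk^h_{G_{\gr}}(\ell,\ell)$, together with naturality/bifunctoriality of $\otimes$ with respect to $\xi\colon A\to B$: one has $(\xi\otimes\ell)\circ(\id_A\otimes m_g) = (\id_B\otimes m_g)\circ(\xi\otimes\ell)$, both sides being the image of $\xi\otimes m_g$ under $\otimes\colon kk^h_{G_{\gr}}(A,B)\times kk^h_{G_{\gr}}(\ell,\ell)\to kk^h_{G_{\gr}}(A\otimes_\ell\ell, B\otimes_\ell\ell)$. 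Under $A\otimes_\ell\ell\cong A$, $B\otimes_\ell\ell\cong B$ the left side is $\xi\circ(\id_A\otimes m_g)$ (our bottom-left path) and the right side is $(\id_B\otimes m_g)\circ\xi$; calling the common value $\xi\otimes g$ closes the square, since the top edge of the diagram is labeled $\xi\otimes g$ precisely as an arrow $A\to B$.

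The main obstacle I anticipate is purely bookkeeping: one must be careful that the symbol $g$ appearing as a subscript of $\iota_g$ in the left edge and the symbol $g$ appearing in the label $\xi\otimes g$ of the top edge refer to the same element of the ring $kk^h_{G_{\gr}}(\ell,\ell)$ — i.e. that the ring isomorphism of Proposition \ref{prop:end-el} sends $g\otimes j^h_{G_{\gr}}(\id_\ell)$ to $m_g$, which is exactly \eqref{map:elg}, and that tensoring with $\ell$ and applying $j^h_{G_{\gr}}$ commute with the corner inclusions $\iota_g$. There is also the minor subtlety, flagged in Remark \ref{rmk:tensor-gr} and in Lemma \ref{lem:tensograd}, that the graded tensor product and hence the product on $kk^h_{G_{\gr}}$ is only available because $G$ is abelian; I would simply record that hypothesis at the start of the proof. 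Once these identifications are in place, the diagram commutes by associativity and bifunctoriality of $\otimes$, with no further computation needed.
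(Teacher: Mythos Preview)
Your proposal is correct and follows exactly the approach the paper intends: the paper's proof simply reads ``Immediate from Proposition \ref{prop:end-el} and Lemma \ref{lem:tensograd},'' and what you have written is precisely the unwinding of that immediacy, identifying the bottom-left composite with $\id_A\otimes m_g$ via \eqref{map:elg} and then with $\xi\otimes g$ via the bilinear product of Lemma \ref{lem:tensograd}.
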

\begin{proof}
Immediate from Proposition \ref{prop:end-el} and Lemma \ref{lem:tensograd}.
\end{proof}
\section{Bivariant Dade theorem}\label{sec:dade}
In this section we show that for a strongly graded unital $*$-algebra $R$, 
the map $R_1\to  G \hltimes R$, $r\mapsto \chi_1 \hltimes r$, is a 
$kk^h$-equivalence. In view of Theorem \ref{thm:kh-cross=kh-gr} and Remark 
\ref{rmk:dade-h-cross}, we regard this result as a bivariant version of the 
implication i)$\Rightarrow$ii) of hermitian Dade Theorem \ref{thm:h-dade}. 

\begin{thm}\label{thm:dade} Let $R\in G_{\gr}-\ahas$ be a strongly graded unital $*$-algebra. Then the map $R_1\to G\hltimes R$, $a\mapsto \chi_1\hltimes a$
is a $kk^h$-equivalence. 
\end{thm}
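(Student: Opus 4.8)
The plan is to exploit the chain of reductions already assembled in the paper and to reduce the statement to a computation that takes place entirely inside $kk^h$, where hermitian bivariant $K$-theory has good formal properties (homotopy invariance, excision, matricial stability, $\iota_+$-stability). First I would observe that $R$ strongly graded forces $R_g R_{g^{-1}} = R_1$ for every $g$, so that each $R_g$ is an invertible $R_1$-bimodule; this is the input that makes Dade's equivalence \eqref{fun:dade} work, and I will use its hermitian upgrade, Theorem \ref{thm:h-dade}, together with Remark \ref{rmk:dade-h-cross}. The key point from that remark is that the composite form functor $\Psi \circ (-\otimes_{R_1} R)\colon \cat{Proj}(R_1)\to\cat{Proj}(G\hltimes R)$ is naturally isomorphic, as a nonsingular form functor, to the one induced by the ring map $\iota\colon R_1\to G\hltimes R$, $r\mapsto \chi_1\hltimes r$. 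Hence, on the level of Grothendieck--Witt groups (equivalently, on $K^{h,G_{\gr}}_\ast$ via Theorem \ref{thm:kh-cross=kh-gr}), the map $\iota$ already induces an isomorphism; the task is to promote this from a $K$-theory statement to a $kk^h$-statement.

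The standard device for such a promotion is the universal property of $j^h_{\fA}$ (here $\fA = \Alg^\ast$, the trivial-group case) together with the observation that $\iota$ becomes invertible after applying \emph{any} excisive, homotopy invariant, $M_{\fX}$-stable and $\iota_+$-stable homology theory. So the second step of the plan is: given such a homology theory $H\colon \Alg^\ast\to\cat T$, show $H(\iota)$ is an isomorphism. For this I would use the crossed-product/module-category dictionary of Subsection \ref{subsec:gramodglamod}: a strongly graded algebra fibered over its unit component behaves, module-theoretically, like a Morita-type situation. Concretely, one checks that $G\hltimes R$ is Morita equivalent to $R_1$ in a way visible to $H$ — for instance by exhibiting, after enlarging $G\hltimes R$ inside a matrix algebra $M_X(G\hltimes R)$ for a suitable $X\in\fS$, a pair of idempotents (the "corner" $\chi_1\hltimes 1$ and a complementary system indexed by $G$) whose associated corner inclusions $H$ inverts by $M_{\fX}$-stability and $\iota_+$-stability. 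The invertibility of the $R_g$ as $R_1$-bimodules is exactly what lets one write the off-diagonal entries $\chi_g\hltimes 1$ in terms of products of elements of $R_1$ and thereby realize $G\hltimes R$ as a full corner (up to matrices) of something Morita equivalent to $R_1$; equivalently, one shows directly that $R_1 \to G\hltimes R$ is a \emph{Morita equivalence} of rings (with idempotent $\chi_1 \hltimes 1$ being full in $G\hltimes R$ when $R$ is strongly graded) and invokes Morita invariance of any $M_{\fX}$-stable functor.

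With the previous step done, the argument closes: $\iota$ is sent to an isomorphism by every excisive, homotopy invariant, matricially stable, $\iota_+$-stable homology theory, in particular by $j^h_{\Alg^\ast}$ by its universal property, so $j^h_{\Alg^\ast}(\iota)$ is invertible in $kk^h$; since $kk^h(A,B) := kk^h_{\Alg^\ast}(M_{\fX}A, M_{\fX}B)$ this is precisely the assertion that $R_1\to G\hltimes R$ is a $kk^h$-equivalence. I expect the \textbf{main obstacle} to be verifying cleanly that $\chi_1\hltimes 1$ is a full idempotent in $G\hltimes R$ under the strong-grading hypothesis — i.e.\ that $(G\hltimes R)(\chi_1\hltimes 1)(G\hltimes R) = G\hltimes R$ — and packaging the resulting Morita context so that its unit and counit maps are actual $\ast$-algebra ingredients (not just ring-theoretic ones), so that $M_{\fX}$-stability of $j^h$ genuinely applies. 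The equality $R_g R_{g^{-1}} = R_1$ supplies, for each $g$, finitely many elements $a_i\in R_g$, $b_i\in R_{g^{-1}}$ with $\sum_i a_i b_i = 1$; translating these into the identity $\sum_i (\chi_g\hltimes a_i)(\chi_1\hltimes 1)(\chi_g\hltimes a_i)^{\ast\text{-partner}} = \chi_g\hltimes 1$ (with the correct bookkeeping of degrees in $G\hltimes R\subset M_G R$) is the computational heart, and everything else is formal.
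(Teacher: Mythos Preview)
Your core strategy---show that $\chi_1\hltimes 1$ is a full projection in $G\hltimes R$ using the strong-grading identity $R_gR_{g^{-1}}=R_1$, then deduce that the corner inclusion $R_1\cong (\chi_1\hltimes 1)(G\hltimes R)(\chi_1\hltimes 1)\hookrightarrow G\hltimes R$ is a $kk^h$-equivalence---is exactly what the paper does. Two comments, though. First, the opening detour through Theorem \ref{thm:h-dade} and Grothendieck--Witt groups is unnecessary and does not help with the promotion: knowing that $\iota$ induces an isomorphism on $K^h_*$ says nothing about $kk^h$ without further input, and the paper never invokes Theorem \ref{thm:h-dade} here. Second, the step you call ``formal''---that a full-projection corner inclusion is inverted by any $M_{\fX}$- and $\iota_+$-stable homology theory---is in fact the substantive part of the argument in the hermitian setting. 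The paper isolates it as a separate lemma (Lemma \ref{lem:moritedade}) and proves it by an explicit construction: from the fullness data one builds row and column matrices $r,c$ with $rc$ equal to the identity projection, and then the element $\lambda$ from \eqref{eq:lambda} is used to assemble a $2\times 2$ matrix $u$ with $u^*u=\iota_+(p)+\iota_-(p)$, so that $\ad(u)$ gives a $\ast$-homomorphism whose composites with $\inc$ are recognisable corner inclusions; one of these needs \cite{kkhlpa}*{Lemma 8.12}. Your instinct that the hermitian bookkeeping is the obstacle is correct, but it is not dispatched by a one-line appeal to Morita invariance.
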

The proof of Theorem \ref{thm:dade} will be given at the end of the section. 

\begin{coro}\label{coro:dade}
Let $A\in G_{\gr}-\ahas$; assume either that $A$ is trivially graded or that $G$ is abelian. Then the map
$A\to G\hltimes A[G]$, $a\mapsto \chi_1\hltimes (\sum_{g\in G}a_gg^{-1})$ is a $kk^h$-equivalence.
\end{coro}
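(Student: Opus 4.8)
The plan is to derive Corollary~\ref{coro:dade} from the bivariant Dade Theorem~\ref{thm:dade} applied to the strongly graded algebra $A[G]$, combined with Example~\ref{ex:groupalg}. First I would observe that under the stated hypothesis ($A$ trivially graded or $G$ abelian), Remark~\ref{rmk:tensor-gr} gives a well-defined $G$-grading on $A[G]=A\otimes_\ell \ell[G]$, and that $\ell[G]$ is strongly $\Z[G]$-graded (with its standard grading), so by Example~\ref{ex:groupalg} the algebra $R:=A[G]$ is strongly $G$-graded and unital whenever $A$ is unital; the general (nonunital) case is handled afterwards by a unitalization/colimit argument, since $A$ is a filtering colimit of the rings $\tilde{A}_\Z$-type data and all functors in sight commute with the relevant colimits.

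Next I would identify the degree-one component: as computed in Example~\ref{ex:groupalg}, $R_1=A[G]_1=\bigoplus_{g\in G}A_g\otimes g^{-1}$, and the map $a\mapsto \sum_{g\in G}a_g\otimes g^{-1}$ is precisely the $\ast$-algebra isomorphism $A\iso R_1$ (here one checks compatibility with the involution using $A_g^\ast\subset A_{g^{-1}}$ and $(a\otimes g^{-1})^\ast=a^\ast\otimes g$). Theorem~\ref{thm:dade} then says the inclusion $R_1\to G\hltimes R$, $r\mapsto\chi_1\hltimes r$, is a $kk^h$-equivalence. Composing the isomorphism $A\iso R_1$ with this inclusion yields exactly the map $A\to G\hltimes A[G]$, $a\mapsto\chi_1\hltimes\bigl(\sum_{g\in G}a_g g^{-1}\bigr)$ of the statement, so it is a $kk^h$-equivalence.

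The main obstacle I anticipate is purely bookkeeping rather than conceptual: one must make sure that the grading on $A[G]$ used in Remark~\ref{rmk:tensor-gr} (where $|a\otimes g|=g$ when $A$ is trivially graded, or $|a\otimes g|=|a|g$ when $G$ is abelian) is the one for which $A[G]$ is strongly graded and for which the identification $R_1\cong A$ of Example~\ref{ex:groupalg} holds, and that the resulting formula for the composite map is literally $a\mapsto\chi_1\hltimes(\sum_g a_g g^{-1})$; this is a short check but requires care with the two cases. For the nonunital case, I would invoke that $G\hltimes-$, $(-)[G]$, $(-)_1$ and $j^h_{G_{\gr}}$ all commute with filtering colimits and that a $\ast$-algebra with self-adjoint graded local units is a filtering colimit of unital $G$-graded $\ast$-rings (as used around \eqref{eq:kgrlocu} and in the proof of Theorem~\ref{thm:kh-cross=kh-gr}), so the equivalence descends from the unital case. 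I would then conclude.
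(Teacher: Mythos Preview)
Your argument for the unital case is essentially identical to the paper's: apply Theorem~\ref{thm:dade} to the strongly graded unital $\ast$-algebra $R=A[G]$ and precompose with the $\ast$-isomorphism $A\iso R_1$, $a\mapsto\sum_g a_g g^{-1}$, coming from Example~\ref{ex:groupalg}.

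The gap is in your treatment of the nonunital case. The statement is for an \emph{arbitrary} $A\in G_{\gr}\text{-}\ahas$, but your reduction invokes that ``a $\ast$-algebra with self-adjoint graded local units is a filtering colimit of unital $G$-graded $\ast$-rings'' and then passes the equivalence through colimits. A general $A$ need not have graded local units, so this does not cover the claim; and your earlier phrase ``$A$ is a filtering colimit of the rings $\tilde{A}_\Z$-type data'' does not describe an actual filtering system of unital rings with colimit $A$. The paper instead uses excision: one applies the unital case to $\ell$ and to the $\ell$-algebra unitalization $\tilde A$, obtaining $kk^h$-equivalences for both, and then the split extension $0\to A\to\tilde A\to\ell\to 0$ (together with the compatible extension obtained after applying $(-)[G]$ and $G\hltimes-$) forces the map for $A$ to be a $kk^h$-equivalence by the two-out-of-three property for distinguished triangles. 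Replace your colimit paragraph with this excision step and the proof is complete.
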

\begin{proof}
If $A$ is unital, then by Example \ref{ex:groupalg}, $A[G]$ is strongly graded and $A\to A[G]_1$, $a\mapsto \sum_{g\in G}a_gg^{-1}$ is a $*$-isomorphism, whence the corollary follows from Theorem \ref{thm:dade}. The corollary for general $A$ follows from the unital case applied to $\ell$ and to the $\ell$-algebra unitalization $\tilde{A}$, using excision. 
\end{proof}
\begin{lem}\label{lem:moritedade}
Let $S\in\ahas$ and $\cE\subset S$ a set of nonzero orthogonal projections of cardinality $\#\cE\le \#\fX$. Assume that 
$$\cU=\{\sum_{e\in \cF}e: \cE\supset \cF\text{ finite }\}$$
is a set of local units for $S$. Further assume that $\cE$ contains a full projection $e_1$. Then the inclusion $e_1Se_1\to S$ is a $kk^h$-equivalence. 
\end{lem}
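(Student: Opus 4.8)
The plan is to reduce the general statement to the $kk^h$-equivalence furnished by Theorem~\ref{thm:dade} via a Morita-type argument, in the spirit of matricial and $G$-stability. First I would observe that the hypotheses make $S$ a filtering colimit of the unital corners $fSf$ with $f\in\cU$; since $kk^h$ commutes with such colimits, it suffices to treat a single corner at a time. Fixing a finite $\cF\subset\cE$ containing $e_1$ and writing $f=\sum_{e\in\cF}e$, the corner $fSf$ is unital with orthogonal projection decomposition $1=\sum_{e\in\cF}e$, and $e_1$ remains full in $fSf$. So the problem is unital: one must show that for a unital $S$ with a finite orthogonal family of projections summing to $1$, one of which, $e_1$, is full, the inclusion $e_1Se_1\hookrightarrow S$ is a $kk^h$-equivalence.

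In the unital case the strategy is to realize $S$ as (the unit corner of) a strongly graded $*$-algebra over a suitable group and invoke Theorem~\ref{thm:dade}. Concretely, writing $\cF=\{e_1,\dots,e_n\}$ and using that $e_1$ is full, $S$ is Morita equivalent to $e_1Se_1$; the fullness of $e_1$ gives elements $x_i\in e_1Se_i$, $y_i\in e_iSe_1$ with $\sum_i x_iy_i=e_1$ (after absorbing the finitely many terms witnessing $e_1\in Se_1Se_1\cdots$), exhibiting $S\cong M_n(e_1Se_1)$ up to the standard Morita data — more precisely $S$ is isomorphic to a corner of $M_n(e_1Se_1)$, and the inclusion $e_1Se_1\hookrightarrow S$ corresponds to a diagonal corner inclusion into such a matrix algebra. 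Then matricial stability of $j^h$ (and, where the involution is twisted by an $\varepsilon$-hermitian unit coming from the projections $e_i$, the $\iota_+$- and $G$-stability established in Section~\ref{sec:stab}, in particular Proposition~\ref{prop:G-stab-no-her}) shows this corner inclusion is a $kk^h$-equivalence. I would phrase the twisted-involution bookkeeping using the $(-)^\phi$ notation from Section~\ref{sec:gbasic}, with $\phi=\sum_i \epsilon_i e_i$ for appropriate signs $\epsilon_i$ recording how the involution behaves on each corner.

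The main obstacle I anticipate is the compatibility of involutions: the projections $e\in\cE$ are self-adjoint, but the Morita isomorphism $S\cong$ (corner of) $M_n(e_1Se_1)$ need not be a $*$-isomorphism for the obvious involution on the matrix algebra, because the ``change of basis'' elements $x_i,y_i$ need not satisfy $y_i=x_i^*$. One fixes this by choosing the $x_i$ and $y_i$ compatibly — using that $\cE$ consists of self-adjoint projections, one can arrange $y_i=x_i^*$ up to replacing the decomposition $e_1=\sum x_iy_i$ by its ``symmetrization'' $e_1=\sum \tfrac{1}{2}(x_iy_i+(x_iy_i)^*)$, which requires $2$ invertible, or more robustly by an argument along the lines of Lemma~\ref{lem:psi-*=*-psi} that tracks the duality through the equivalence. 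The resulting involution on $M_n(e_1Se_1)$ is then of the form $(-)^\phi$ for a diagonal $\varepsilon$-hermitian $\phi$, which is exactly the situation covered by $G$-stability of $j^h$. Once this identification of the $*$-structure is in place, the conclusion is immediate from the stability properties of $j^h_{\AnyAlg}$ recorded in Section~\ref{sec:kkhgr}.
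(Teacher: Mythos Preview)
Your proposal has a circularity and a genuine gap. First, you cannot invoke Theorem~\ref{thm:dade}: in the paper that theorem is \emph{proved using} the present lemma (see the proof of Theorem~\ref{thm:dade} immediately after Lemma~\ref{lem:moritedade}), so appealing to it here is circular. You announce this strategy and then abandon it for a direct Morita argument, so the plan is also internally inconsistent. Second, the filtering-colimit reduction is not justified in the paper's framework, and in any case the paper does not pass to finite corners; it handles all of~$\cE$ at once.

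The real gap is the one you yourself flag: involution compatibility. Your fixes do not work. Symmetrizing via $\tfrac12(x_iy_i+(x_iy_i)^*)$ needs $2$ invertible, which is strictly stronger than the standing hypothesis~\eqref{eq:lambda}; and Lemma~\ref{lem:psi-*=*-psi} is about module duality and has no bearing here. Even in the finite unital case, the Morita embedding $s\mapsto (c_{ij}\,s\,r_{kl})$ lands in a corner $p M_N(e_1Se_1)p$ where the idempotent $p$ is in general \emph{not} self-adjoint, so there is no diagonal $\phi$ making it a $*$-map. The paper's proof resolves this by an explicit construction: from the row and column matrices $r_e,c_e$ witnessing fullness one assembles $r,c\in C_{\N\times\cE}S$ and, using $\lambda$ from~\eqref{eq:lambda}, builds
\[
u=\begin{pmatrix}\lambda^*c+\lambda r^* & c-r^*\\ \lambda\lambda^*(c-r^*) & \lambda c+\lambda^*r^*\end{pmatrix}\in M_{\pm}C_{\N\times\cE}S
\]
with $u^*u=\iota_+(\alpha(p))+\iota_-(\alpha(p))$. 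Conjugation by $u$ then yields an honest $*$-homomorphism $\phi\colon S\to M_\pm M_\infty M_{\cE}(e_1Se_1)$; one checks that $\phi\circ\inc$ is a corner inclusion and that $M_\pm M_\infty M_{\cE}(\inc)\circ\phi=\ad(u)\circ\iota_+$, both $kk^h$-equivalences by stability and \cite{kkhlpa}*{Lemma 8.12}. This $\lambda$-trick is precisely the substitute for your ``divide by $2$'' under the weaker hypothesis~\eqref{eq:lambda}, and it is the idea your proposal is missing.
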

\begin{proof}
Because the elements of $\cE$ are orthogonal projections and $\cU$ is a set of local units, we have $S=\bigoplus_{e,f\in \cE}eSf$. Thus for $C_{\cE}$ as in \eqref{eq:wacon} and $p=\sum_{e\in\cE}\elmat_{e,e}\otimes e\in C_{\cE}S$, we have $S\cong pM_{\cE}Sp$. 
By hypothesis, for each $e\in \cE$ there are a finite row matrix $r_e\in (eSe_1)^{(\{1\}\times \N)}$ and a finite column matrix $c_e\in (e_1Se)^{(\N\times \{1\})}$  such that $r_ec_e=e$; in particular we may choose $r_{e_1}=c_{e_1}=\elmat_{1,1}e_1$. Let $r,c\in C_{\N\times\cE}S$, $r=\sum_{n,e}\elmat_{(1,e),(n,e)}(r_e)_{1,n}$ and $c=\sum_{n,e}\elmat_{(n,e),(1,e)}(c_e)_{n,1}$. We have  $rc=\sum_{e\in\cE}\elmat_{(1,e),(1,e)}e$, which is the image of $p$ under the inclusion $\alpha:C_{\cE}S\subset C_{\N\times \cE}S$ induced by $\cE\to \N\times\cE$, $e\mapsto (1,e)$. Let $\lambda$ be as in \eqref{eq:lambda} and set
\[
u=\begin{bmatrix}\lambda^*c+\lambda r^*& c-r^*\\
  \lambda\lambda^*(c-r^*)&\lambda c+\lambda^*r^*\\
  \end{bmatrix}\in M_{\pm}C_{\N\times \cE}S.
\]
For $A\in\ahas$, let $\iota_-:A\to M_{\pm}A$ be the lower right corner inclusion. One checks that $u^*u=\iota_+(\alpha(p))+\iota_{-}(\alpha(p))$. Set 
\[
R=e_1Se_1,\,\,T=\alpha(p) M_\infty M_{\cE} S\alpha(p).
\]
The composite
\[
\phi:S\cong T\overset{\iota_+}{\lra} M_{\pm}T\overset{\ad(u)}{\lra}M_{\pm}M_\infty M_{\cE}R
\]
is a $\ast$-algebra homomorphism. One checks that the composite of $\phi$ and the inclusion $\inc:R\subset S$ is the corner inclusion $\iota_+\otimes\iota_1\otimes\iota_{e_1}$ and is thus a $kk^h$ equivalence by matricial and hermitian stability. The composite
$M_\pm M_\infty M_{\cE}(\inc)\circ\phi$ maps $a\mapsto \ad(u)(\iota_+(a))$, and is thus a $kk^h$-equivalence by \cite{kkhlpa}*{Lemma 8.12} and hermitian stability of the functor $j^h$. This finishes the proof. 
\end{proof}

\begin{proof}[Proof of Theorem \ref{thm:dade}] Set $S:=G\hltimes R$, $\cE=\{\chi_g\hltimes 1: g\in G\}$ and $e_g=\chi_g\hltimes 1$. Because $R$ is strongly graded, for every $g\in G$ there exist $n\ge 1$, $y\in R_{g^{-1}}^{1\times n}$ and $y'\in R_{g}^{n\times 1}$ such that $yy'=1$. Then $x=\chi_g\hltimes y:=(\chi_g\hltimes y_1,\dots,\chi_g\hltimes y_n)\in (\chi_g\hltimes R_{g^{-1}})^{1\times n}=e_gS^{1\times n}e_1$ and $x'=\chi_1 \hltimes y'$ satisfy $e_g=xx'$. Now apply Lemma \ref{lem:moritedade}.
\end{proof}

\begin{rem}\label{rem:dade}
We do not know whether the converse of Theorem \ref{thm:dade} holds. However it is straightforward to show that the key property that we use in the proof, namely, that the projection $\chi_1\hltimes 1\in G\hltimes R$ is full, is equivalent to the grading of $R$ being strong. \end{rem}

\section{A distinguished triangle for Leavitt path algebras in \topdf{$kk_{G_{\gr}}^h$}{kkhGgr}}
\label{sec:triang}

Let $E$ be a graph; write $C(E)$ and for its Cohn algebra. For $v \in E^0$ such that $|s^{-1}(v)| < \infty$, put
\begin{equation}\label{eq:qvmv}
    m_v := \sum_{e \in s^{-1}(v)}ee^\ast, \quad q_v = v-m_v \in C(E).
\end{equation}
Recall that by definition we have a short 
exact sequence \eqref{ext:cohn}
where $\cK(E) = \langle q_v \colon v \in \reg(E) \rangle$.

Let $E$ be a graph and 
$\omega \colon E^1 \to G$ a weight; it induces a 
$G$-grading on $C(E)$ by defining $|e| = \omega(e)$,
$|e^\ast| = \omega(e)^{-1}$ and
$|v| = 1$ for each $v \in E^0, e \in E^1$. Moreover, 
the ideal $\cK (E)$ is homogeneous
with respect to this grading, as it is generated by homogenous
elements of degree $1$. In particular, 
the Leavitt path algebra $L(E)$ inherits a $G$-grading
associated to $\omega$. We write $\cK_\omega(E)$, $C_\omega (E)$ and $L_\omega(E)$ for the algebras $\cK(E)$, $C(E)$ and $L(E)$ equipped with these $G$-gradings. It follows from 
\cite{lpabook}*{Proposition 1.5.11} that
\eqref{ext:cohn} is $\ell$-split and, moreover, that 
the section preserves the $G$-gradings induced by $\omega$.

Fix a graph $E$; a homology theory $H:G_{\gr}-\ahas\to \cT$ is called \emph{$E$-stable} if it is $\iota_+$-stable and $G$-stable in the sense of Convention \ref{conv:gstab} with respect to $G$-graded sets of cardinality no higher than that of $\fX=E^0\sqcup E^1\sqcup \N\sqcup G$. 

If $I$ is a set, $\cA$ an additive category and $F:G_{\gr}-\ahas\to\cA$ a functor, we say that $F$ is \emph{$I$-additive} if first of all direct sums of cardinality $\le \#I$ exist in $\cA$ and second of all the map
\[
\bigoplus_{j\in J}F(A_j)\to F(\bigoplus_{j\in J}A_j)
\]
is an isomorphism for any family $\{A_j:j\in J\}\subset G_{\gr}-\ahas$ with $\#J\le\# I$. 

\begin{prop} \label{prop:q-iso} Let $E$ be a 
graph and $\omega \colon E^1 \to G$ a weight. 
Consider the graded $\ast$-homomorphism
\begin{equation} \label{mor:q}
    q \colon \ell^{\reg(E)} \to \cK_\omega (E), \quad \chi_v \mapsto q_v.
\end{equation}
Let $\cA$ be an additive category and $F:G_{\gr}-\ahas\to\cA$ a $\reg(E)$-additive, $E$-stable functor. Then $F(q)$ is an isomorphism.
\end{prop}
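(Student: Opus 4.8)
The plan is to identify $\cK_\omega(E)$ with a direct sum, indexed by $\reg(E)$, of $G$-graded matrix algebras over $\ell$, under which $q$ becomes a coproduct of diagonal corner inclusions; $\reg(E)$-additivity together with $G$-stability would then finish the proof.

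First I would pin down the structure of $\cK_\omega(E)$. For $v\in\reg(E)$ let $\Lambda_v$ be the set of finite paths $p$ of $E$ with $r(p)=v$, including the trivial path $p_v$ at $v$, regarded as a $G$-graded set via $|p|:=\omega(p)=\omega(e_1)\cdots\omega(e_n)$ for $p=e_1\cdots e_n$ and $|p_v|:=1$. A direct computation with the Cohn relations (using $e^\ast f=\delta_{e,f}r(e)$ and $q_v e=0$ for $e\in s^{-1}(v)$) shows that, for $p,p',p'',p'''\in\Lambda_v$,
\[
(p\,q_v\,(p')^\ast)(p''\,q_v\,(p''')^\ast)=\delta_{p',p''}\,p\,q_v\,(p''')^\ast, \qquad (p\,q_v\,(p')^\ast)^\ast=p'\,q_v\,(p)^\ast,
\]
that $q_v\neq 0$, and that the $\ell$-span of the elements $p\,q_v\,(p')^\ast$ is the $\ast$-ideal $I_v:=C_\omega(E)\,q_v\,C_\omega(E)$ of $C_\omega(E)$, which sits inside $\cK_\omega(E)$. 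Since $|p\,q_v\,(p')^\ast|=|p|\,|p'|^{-1}$, the rule $\elmat_{p,p'}\mapsto p\,q_v\,(p')^\ast$ then defines an isomorphism of $G$-graded $\ast$-algebras $\theta_v\colon M_{\Lambda_v}\ell\iso I_v$ (with $M_{\Lambda_v}\ell$ carrying the $G$-grading of Convention \ref{conv:gstab}) sending the diagonal idempotent $\elmat_{p_v,p_v}$ to $q_v$; in other words $\theta_v\circ\iota_{p_v}\colon\ell\to I_v$ is the map $1\mapsto q_v$. Using that the $q_v$, $v\in\reg(E)$, are orthogonal self-adjoint idempotents generating $\cK_\omega(E)$ as an ideal and that $q_v\,C_\omega(E)\,q_w=0$ for $v\neq w$, I would deduce a decomposition of $G$-graded $\ast$-algebras $\cK_\omega(E)=\bigoplus_{v\in\reg(E)}I_v$ under which $q=\bigoplus_{v\in\reg(E)}(\theta_v\circ\iota_{p_v})$.

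Next I would run the stability argument. The set of finite paths of $E$ has cardinality at most $\#(E^1\sqcup\N)$, so $\#\Lambda_v\le\#\fX$ with $\fX=E^0\sqcup E^1\sqcup\N\sqcup G$, and $E$-stability of $F$ applies to $M_{\Lambda_v}\ell$. Applying the defining property of $G$-stability to the ideal embedding $\id_\ell\colon\ell\to\ell$, the $G$-graded sets $X=\{p_v\}$ and $Y=\Lambda_v\setminus\{p_v\}$, and the invariant $1$-hermitian elements $\phi=1\in C_X^\circ\ell$, $\psi=\sum_{y\in Y}\elmat_{y,y}\in C_Y^\circ\ell$ --- for which $\phi\oplus\psi=1_{M_{\Lambda_v}\ell}$, so the induced involution is the standard one --- one gets that $F$ sends $\iota_{p_v}\colon\ell\to M_{\Lambda_v}\ell$ to an isomorphism, and hence that $F(\theta_v\circ\iota_{p_v})$ is an isomorphism for each $v\in\reg(E)$. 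Finally, $\reg(E)$-additivity gives that $\bigoplus_v F(\ell)\to F(\ell^{\reg(E)})$ and $\bigoplus_v F(I_v)\to F(\cK_\omega(E))$ are isomorphisms, and under these identifications $F(q)=\bigoplus_{v\in\reg(E)}F(\theta_v\circ\iota_{p_v})$ is a direct sum of isomorphisms, hence an isomorphism.

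The hard part will be the first step: one must check that the isomorphism $\cK_\omega(E)\cong\bigoplus_v M_{\Lambda_v}\ell$ is compatible simultaneously with the $G$-grading (which is exactly what forces the weighting of $\Lambda_v$ by $\omega$) and with the involution. Once that structural input is secured, the stability and additivity steps are routine.
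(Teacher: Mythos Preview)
Your proof is correct and follows essentially the same route as the paper: identify $\cK_\omega(E)$ with $\bigoplus_{v\in\reg(E)} M_{\cP_v}\ell$ (the paper writes $\cP_v$ for your $\Lambda_v$) via $\elmat_{p,p'}\mapsto p\,q_v\,(p')^\ast$, observe that this is a $G$-graded $\ast$-isomorphism under which $q$ becomes the coproduct of the corner inclusions $\iota_v$, and then invoke $\reg(E)$-additivity together with $E$-stability and the cardinality bound $\#\cP_v\le\#\fX$. Your write-up supplies more detail on the matrix relations and the precise invocation of $G$-stability, but the argument is the same.
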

\begin{proof} For each $v \in E^0$, write $\cP_v$ for the set of paths that end at $v$. We may view any path in $E$ as homogeneous element of $C_\omega(E)$; this equips $\cP_v$ with a $G$-grading, and the canonical $\ast$-algebra isomorphism 
\begin{equation}\label{map:matriso}
    \bigoplus_{v \in \reg(E)} M_{\cP_v} \iso \cK_\omega(E), \quad  
    \elmat_{\alpha, \beta} \mapsto \alpha q_v \beta^\ast, \quad (v \in E^0, 
    \alpha, \beta \in \cP_v)
\end{equation}
is $G$-graded. Since $q$ factors through \eqref{map:matriso} via the sum of inclusions 
$\oplus \iota_v \colon \ell^{\reg(E)} \to  \bigoplus_{v \in \reg(E)} M_{\cP_v}$ and since $F$ is $\reg(E)$-additive by hypothesis,
it suffices to show that $F(\iota_v)$ is an isomorphism for every $v\in\reg(E)$.
This follows from $E$-stability, as the morphism in question is induced by the 
$G$-graded set inclusion $\{v\} \subset \cP_v$ and $\#\cP_v\le \#(E^0\sqcup E^1\sqcup\N)$.
\end{proof}

\begin{prop} \label{prop:phi-iso} Let $E$ be a graph and $\omega \colon E^1 \to G$ a weight. Consider the $G$-graded $\ast$-homomorphism
\begin{equation}\label{mor:phi}
    \varphi \colon \ell^{(E^0)} \to C_\omega (E), \quad \chi_v \mapsto v.
\end{equation}
Let $H:G_{\gr}-\ahas\to\cT$ be a homotopy invariant, excisive, $E$-stable and $E^0$-additive homology theory. Then $H(\varphi)$ is an isomorphism. 
\end{prop}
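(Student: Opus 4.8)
The plan is to realize $C_\omega(E)$ as a $G$-graded algebraic Toeplitz (Pimsner) algebra and to run the standard dilation argument, the only genuinely new ingredient being the homogeneity bookkeeping. Put $X_E=\bigoplus_{e\in E^1}\ell$, viewed as an $\ell^{(E^0)}$-bimodule with left and right actions induced by $s$ and $r$, equipped with the $\ell^{(E^0)}$-valued form $\langle\delta_e,\delta_f\rangle=\delta_{e,f}\chi_{r(e)}$ and the grading $|\delta_e|=\omega(e)$. Then $C_\omega(E)$ is the algebraic Toeplitz algebra of $X_E$: it is generated by $\ell^{(E^0)}$ together with creation operators $T_{\delta_e}=e$ subject to $T_\xi^\ast T_\eta=\langle\xi,\eta\rangle$ and to the left--action relations, and $\varphi$ is the canonical inclusion of the coefficient ring. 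The Fock module $\mathcal{F}=\bigoplus_{n\ge 0}X_E^{\otimes n}$ is, as a graded right $\ell^{(E^0)}$-module, free on the set $\cP$ of finite paths of $E$ (so $\#\cP\le\#\fX$), and it carries the self-similarity isomorphism $\mathcal{F}\cong \ell^{(E^0)}\oplus\bigl(X_E\otimes_{\ell^{(E^0)}}\mathcal{F}\bigr)$ of graded right $\ell^{(E^0)}$-modules.

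Being $E$-stable, $H$ is $\iota_+$-stable, $G$-stable and hence $M_\fX$-stable; together with homotopy invariance and excisivity this puts $H$ in the category in which $j^h_{G_{\gr}}$ is initial (Theorem~\ref{thm:kkggr}, the extra $E^0$-additivity being harmless), so $H$ factors through $j^h_{G_{\gr}}$ and it suffices to show that $j^h_{G_{\gr}}(\varphi)$ is invertible in $kk^h_{G_{\gr}}$. For this I would carry out the algebraic version of Pimsner's dilation argument: the Fock representation $C_\omega(E)\to\End_\ell(\mathcal{F})$, together with the shift and the corner projections furnished by the self-similarity isomorphism above, yields --- after stabilizing by $\fX$-indexed matrices, which $E$-stability absorbs, and an elementary polynomial homotopy --- an explicit two-sided inverse of $j^h_{G_{\gr}}(\varphi)$. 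All the modules, operators and homotopies produced along the way are naturally $G$-graded and all the maps homogeneous of degree $1$, so, relative to the ungraded statement, the only additional work is to verify these homogeneity claims.

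The main obstacle is exactly this dilation step: when $E$ contains a cycle, $\cP$ is infinite and $C_\omega(E)$ is \emph{not} a direct sum of matrix algebras, so --- in contrast with Proposition~\ref{prop:q-iso} --- one genuinely needs the combined force of excision, homotopy invariance and $E$-stability rather than a mere factorization through corner inclusions. When $E$ is acyclic this difficulty disappears: there $C_\omega(E)$ is a direct sum of graded matrix algebras $M_{\cP_v}$ through which $\varphi$ factors as $\bigoplus_v\iota_v$, and each $H(\iota_v)$ is an isomorphism by $E$-stability, exactly as in Proposition~\ref{prop:q-iso}.
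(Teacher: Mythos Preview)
Your overall strategy---recognize $C_\omega(E)$ as an algebraic Toeplitz algebra and run the Fock/dilation argument, checking that all maps and homotopies are homogeneous---is exactly the spirit of the paper's proof: the paper simply points to \cite{kklpa1}*{Theorem 4.2} and \cite{kkhlpa}*{Theorem 3.2} and records that the auxiliary algebras appearing there (the algebra $C^m(E)\hookrightarrow\Gamma_\cP$, the ideal $\fA\triangleleft M_\cP C_\omega(E)$, and the crossed product $\fA\ltimes_{\rho'}C^m(E)$ with its ideal $J$) all carry natural $G$-gradings induced by $\omega$ for which the relevant quasimorphisms and elementary polynomial homotopies are homogeneous.

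There is, however, a genuine gap in your reduction step. You invoke Theorem~\ref{thm:kkggr} to factor $H$ through $j^h_{G_{\gr}}$, but that theorem requires $H$ to be $M_\fX$-stable and $G$-stable for the \emph{global} $\fX$ of Convention~\ref{conv:gstab}, whereas $E$-stability only gives you stability with respect to graded sets of cardinality at most $\#(E^0\sqcup E^1\sqcup\N\sqcup G)$. If the ambient $\fX$ was chosen strictly larger (nothing forbids this), an $E$-stable $H$ need not factor through $kk^h_{G_{\gr}}$, and the invertibility of $j^h_{G_{\gr}}(\varphi)$ there tells you nothing about $H(\varphi)$. The paper avoids this by running the quasimorphism/homotopy argument directly with $H$: every matrix algebra and every corner inclusion that shows up in the construction is indexed by a set of size $\le\#\cP\le\#(E^0\sqcup E^1\sqcup\N\sqcup G)$, so $E$-stability alone makes them invertible under $H$. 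Your sketch is already size-aware (you note $\#\cP\le\#\fX$ and that $E$-stability absorbs the stabilization), so the fix is simply to drop the appeal to Theorem~\ref{thm:kkggr} and phrase the dilation argument for $H$ itself; alternatively, state and use an ``$E$-local'' version of $kk^h_{G_{\gr}}$ built with the $E$-specific $\fX$, through which any $E$-stable $H$ does factor.
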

\begin{proof} The proof of this proposition consists of verifying that the argument of  \cite{kklpa1}*{Theorem 4.2} for the
ungraded case, together with the necessary adaptations to the hermitian context \cite{kkhlpa}*{Theorem 3.2} follow 
through, once we equip all algebras and morphisms with their corresponding $G$-gradings induced by $\omega$. The ungraded, non-hermitian case is based upon the usage of quasimorphisms, which can be 
adapted to the present context from \cite{cmrkk}*{Proposition 3.3} in the same way. 
The homotopies given in \cite{kklpa1}*{Theorem 4.2} do not preserve involutions, 
hence they need to be adapted according to \cite{kkhlpa}*{Theorem 3.2}. One checks 
that moreover these elementary polynomial homotopies are $G$-graded. To conclude, we record the $G$-grading of the algebras involved in the proof, 
for which all maps of \cite{kklpa1}*{Section 4} are $G$-graded. 
Write $\cP$ for the set of paths of the graph $E$, equipped by the $G$-grading induced by $\omega$. In \cite{kklpa1}*{proof of Theorem 4.2, part I} an algebra $C^m(E)$ is introduced,
together with a injective $\ast$-homomorphism $\rho \colon C^m(E) \to \Gamma_\cP$.
One checks that the image of $\rho$ lies in $C^\circ_\cP$ and equips $C^m(E)$
with the induced $G$-grading. The
subalgebra $\fA$ defined in \cite{kklpa1}*{proof of Theorem 4.2, part II}
is a homogeneous ideal of $M_\cP C_\omega(E)$ and thus
carries a canonical $G$-grading. In the same proof a crossed 
product $\fA \ltimes_{\rho'} C^m(E)$ is considered, 
together with an ideal $J \vartriangleleft\fA \ltimes_{\rho'} C^m(E)$. 
As an $\ell$-module the former coincides with 
$\fA \oplus C^m(E)$; the $G$-grading given by $(\fA \ltimes_{\rho'} C^m(E))_g 
= \fA_g \oplus C^m(E)_g$ makes it into a $G$-graded $\ast$-algebra for which
the ideal $J$ is homogeneous.
\end{proof}

\begin{lem} \label{lem:iso-v-vg} 
Let $E$ be a graph, $\omega:E^1\to G$ a weight, and $\widetilde{E}=\widetilde{(E,\omega)}$. For each $v \in E^0$ and $g \in G$, the isomorphism
\[
    kk_{G_{\gr}}^h(\ell, C_\omega(E)) \cong kk_G^h(\ell^{(G)}, G \hltimes C_\omega(E))
    = kk_G^h(\ell^{(G)}, C(\ucov{E})) \cong kk^h(\ell, C(\ucov{E}))
\] 
maps $g \cdot j^h_{G_{\gr}}(v)$ to $j^h(v_g)$.
\end{lem}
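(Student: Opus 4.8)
The plan is to recognize the composite isomorphism in the statement as a concrete instance of the isomorphism $\mu$ of \eqref{map:elmu} for $A=C_\omega(E)$, and then to trace the element $g\cdot j^h_{G_{\gr}}(v)$ through it by hand. The first arrow in the composite is the Baaj--Skandalis functor $G\hltimes-$ of Theorem \ref{thm:baaj-skandalis} applied to $kk_{G_{\gr}}^h(\ell,C_\omega(E))$, using $G\hltimes\ell=\ell^{(G)}$; the middle equality is the identification $G\hltimes C_\omega(E)=C(\ucov E)$, which like that of Proposition \ref{prop:crosscover} is the one sending $\chi_g\hltimes v$ to $v_g$ (and $\chi_g\hltimes e$ to $e_g$) and follows by the same computation as in \cite{preuhyper}; and the last arrow is the isomorphism of Theorem \ref{thm:ind-res} for the trivial subgroup together with \eqref{eq:ind=cross}, which by Remark \ref{rmk:res-unit} sends the class of a $G$-$\ast$-morphism $\psi\colon\ell^{(G)}\to B$ to the class of $\psi\circ\iota_1$, where $\iota_1(\mu)=\chi_1\hltimes\mu$.

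Next I would unwind the $G$-action. By definition $g\cdot j^h_{G_{\gr}}(v)=j^h_{G_{\gr}}(v)\circ m_g$, and by the proof of Proposition \ref{prop:end-el} the functor $G\hltimes-$ carries $m_g\in kk_{G_{\gr}}^h(\ell,\ell)$ to the class of the $G$-$\ast$-automorphism $\rho_g$ of $\ell^{(G)}$ given by $\rho_g(\chi_s)=\chi_{sg}$. Using functoriality of $G\hltimes-$ and the compatibility $(G\hltimes-)\circ j^h_{G_{\gr}}=j^h_G\circ(G\hltimes-)$, the first arrow sends $g\cdot j^h_{G_{\gr}}(v)$ to $j^h_G\bigl((G\hltimes v)\circ\rho_g\bigr)$, where $(G\hltimes v)\circ\rho_g\colon\ell^{(G)}\to C(\ucov E)$ is the $G$-$\ast$-morphism $\chi_s\hltimes\mu\mapsto\chi_{sg}\hltimes\mu v$. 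Precomposing with $\iota_1$ as prescribed by Remark \ref{rmk:res-unit} yields the $\ast$-morphism $\ell\to C(\ucov E)$, $\mu\mapsto\chi_g\hltimes\mu v=\mu v_g$, that is, the morphism attached to the projection $v_g$; hence the composite sends $g\cdot j^h_{G_{\gr}}(v)$ to $j^h(v_g)$. (Alternatively, one can route this through Lemma \ref{lem:cdot=cdot'}: the action $\cdot'$ on $kk^h(\ell,G\hltimes C_\omega(E))$ is induced by $\alpha_g(\chi_s\hltimes b)=\chi_{gs}\hltimes b$, and since $\mu(j^h_{G_{\gr}}(v))=j^h(v_1)$ one gets $\mu(g\cdot j^h_{G_{\gr}}(v))=g\cdot' j^h(v_1)=j^h(\alpha_g\circ v_1)=j^h(v_g)$.)

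There is no conceptual obstacle here; the difficulty is entirely bookkeeping, and the one real danger is a stray inversion producing $v_{g^{-1}}$ instead of $v_g$. The two points that must be pinned down rather than guessed are therefore: which of the two mutually inverse Baaj--Skandalis functors is in play and exactly how it acts on $m_g$ (read off from the proof of Proposition \ref{prop:end-el}, where $m_g$ corresponds to the class of $\rho_g$ with $\rho_g(\chi_s)=\chi_{sg}$), and the precise action of the induction--restriction isomorphism of Theorem \ref{thm:ind-res} on morphisms (Remark \ref{rmk:res-unit}: forget the $G$-action and precompose with $\iota_1$). Once these conventions are fixed, every step above is forced.
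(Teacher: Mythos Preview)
Your proof is correct and is essentially the paper's argument spelled out in full: the paper's one-line proof is ``Apply Lemma \ref{lem:cdot=cdot'} to the projection $v \in C_\omega(E)$,'' and your direct trace through $G\hltimes-$, the identification $G\hltimes C_\omega(E)=C(\ucov{E})$, and the precomposition with $\iota_1$ from Remark \ref{rmk:res-unit} is exactly the content of that lemma specialized to $\xi=j^h_{G_{\gr}}(v)$ (indeed, you note this yourself in your parenthetical alternative). The bookkeeping is right and there is no sign error.
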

\begin{proof} Apply Lemma \ref{lem:cdot=cdot'} to the projection $v \in C_\omega(E)$.
\end{proof}

\begin{rem}\label{rem:GactsH}
Let $H:G_{\gr}-\ahas\to\cT$ be a homotopy invariant, $E$-stable, excisive homology theory. Then by the universal property of $j^h_{G_{\gr}}$ (Theorem \ref{thm:kkggr}),  there is a ring homomorphism 
$\bar{H}\colon kk^h_{G_{\gr}}(\ell,\ell)\to \End_{\cT}(\ell)$. Composing it with the map of Proposition \ref{prop:end-el}, we obtain a ring homomorphism
\[
\Z[G^{\op}]\to \End_{\cT}(H(\ell)).
\]
In particular, if $H$ is $E^0$-additive, then $I-A_\omega^t$ defines a homomorphism $H(\ell)^{\reg(E)}\to H(\ell)^{E^0}$ in $\cT$.
\end{rem}

\begin{thm}\label{thm:triang}
Let $E$ be a graph, $G$ a group, $\omega:E^1\to G$ a weight, and $A_\omega\in\Z[G]^{(\reg(E)\times E^0)}$ the weighted adjacency matrix of \eqref{intro:weightind}. Let $H:G_{\gr}-\ahas\to\cT$ be a homotopy invariant, $E$-stable, excisive and $E^0$-additive homology theory. Then $H$ maps the Cohn extension to a distinguished
triangle in $\cT$ of the form
\begin{equation}\label{triang:general}
H(\ell)^{\reg(E)} \xto{I-A_\omega^t} H(\ell)^{E^0} \to H(L_\omega(E)).
\end{equation}
\end{thm}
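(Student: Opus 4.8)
The plan is to feed the Cohn extension \eqref{ext:cohn} into $H$ and then recognize its three terms. Since by \cite{lpabook}*{Proposition 1.5.11} the sequence \eqref{ext:cohn} admits an $\ell$-linear section which moreover respects the $G$-gradings induced by $\omega$, it is weakly split, and excisiveness of $H$ produces a distinguished triangle
\[
\Omega H(L_\omega(E))\to H(\cK_\omega(E))\xrightarrow{H(\iota)} H(C_\omega(E))\to H(L_\omega(E)),
\]
where $\iota\colon\cK_\omega(E)\hookrightarrow C_\omega(E)$ is the inclusion. As $\reg(E)\subseteq E^0$, the hypotheses on $H$ make it $\reg(E)$-additive as well as $E^0$-additive, so Proposition \ref{prop:q-iso} applies to the morphism $q$ of \eqref{mor:q} and Proposition \ref{prop:phi-iso} to the morphism $\varphi$ of \eqref{mor:phi}: the maps $H(q)\colon H(\ell)^{\reg(E)}\iso H(\cK_\omega(E))$ and $H(\varphi)\colon H(\ell)^{E^0}\iso H(C_\omega(E))$ are isomorphisms. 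Substituting these into the triangle above and setting $\xi:=H(\varphi)^{-1}\circ H(\iota)\circ H(q)$, we obtain a distinguished triangle of the shape \eqref{triang:general} with $\xi$ in place of $I-A_\omega^t$. Everything thus reduces to the identity $\xi=I-A_\omega^t$.

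Since $H(\ell)^{\reg(E)}=\bigoplus_{v\in\reg(E)}H(\ell)$ and both $\xi$ and $I-A_\omega^t$ are morphisms out of it, I will compare them on each summand. Unwinding the definitions, the restriction of $\xi$ to the $v$-th summand equals $H(\varphi)^{-1}\circ H(q_v)$, where $q_v\colon\ell\to C_\omega(E)$ is the $\ast$-morphism determined by the projection $q_v=v-\sum_{e\in s^{-1}(v)}ee^\ast$ of \eqref{eq:qvmv}, a finite sum because $v$ is regular. The elements $q_v$ and the $ee^\ast$ ($e\in s^{-1}(v)$) are mutually orthogonal degree-$1$ projections adding up to the vertex projection $v$, so additivity of $j^h_{G_{\gr}}$ on orthogonal projections gives $j^h_{G_{\gr}}(v)=j^h_{G_{\gr}}(q_v)+\sum_{e\in s^{-1}(v)}j^h_{G_{\gr}}(ee^\ast)$ in the abelian group $kk^h_{G_{\gr}}(\ell,C_\omega(E))$; applying the triangulated functor $\bar H\colon kk^h_{G_{\gr}}\to\cT$ through which $H$ factors (Theorem \ref{thm:kkggr}) yields $H(q_v)=H(v)-\sum_e H(ee^\ast)$. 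Because $\varphi(\chi_v)=v$ we have $H(\varphi)^{-1}\circ H(v)=\inc_v$, the $v$-th coordinate inclusion, so it remains to identify $H(\varphi)^{-1}\circ H(ee^\ast)$ with $\inc_{r(e)}\circ\bar H(m_{\omega(e)})$, where $m_{\omega(e)}\in kk^h_{G_{\gr}}(\ell,\ell)$ is as in Proposition \ref{prop:end-el}; indeed, by Remark \ref{rem:GactsH} this is exactly how the $(w,v)$-entry of $I-A_\omega^t$ acts, so that $(I-A_\omega^t)\circ\inc_v=\inc_v-\sum_{e\in s^{-1}(v)}\inc_{r(e)}\circ\bar H(m_{\omega(e)})$.

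Granting the key identity $j^h_{G_{\gr}}(ee^\ast)=j^h_{G_{\gr}}(r(e))\circ m_{\omega(e)}$ in $kk^h_{G_{\gr}}(\ell,C_\omega(E))$, applying $\bar H$ and using $H(\varphi)^{-1}\circ H(r(e))=\inc_{r(e)}$ completes the comparison. To prove this identity I would transport it along the isomorphism $\mu\colon kk^h_{G_{\gr}}(\ell,C_\omega(E))\iso kk^h(\ell,G\hltimes C_\omega(E))=kk^h(\ell,C(\ucov E))$ of \eqref{map:elmu}. By Remark \ref{rmk:res-unit} and the definition of $G\hltimes-$, $\mu(j^h_{G_{\gr}}(ee^\ast))$ is the class of the $\ast$-morphism determined by the projection $\chi_1\hltimes ee^\ast\in G\hltimes C_\omega(E)$, which under the identification $G\hltimes C_\omega(E)=C(\ucov E)$ is $e_1e_1^\ast$; since $e_1$ is a genuine edge of $\ucov E$ with $e_1^\ast e_1=r(e)_{\omega(e)}$, ordinary matricial stability of $j^h$ gives $j^h(e_1e_1^\ast)=j^h(r(e)_{\omega(e)})$. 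On the other hand Lemma \ref{lem:iso-v-vg} (whose proof rests on Lemma \ref{lem:cdot=cdot'}) identifies $j^h(r(e)_{\omega(e)})$ with $\mu(\omega(e)\cdot j^h_{G_{\gr}}(r(e)))=\mu(j^h_{G_{\gr}}(r(e))\circ m_{\omega(e)})$, and injectivity of $\mu$ gives the identity. I expect the main obstacle to be precisely this last step: matching the class of the projection $ee^\ast$ with the $\omega(e)$-translate of the class of the vertex $r(e)$ forces one to chase the Baaj--Skandalis and induction--restriction equivalences while keeping the $G$ versus $G^{\op}$ conventions and the degree bookkeeping consistent with the definition of $A_\omega$ in \eqref{intro:weightind}; the additivity of $j^h_{G_{\gr}}$ on orthogonal projections and the claim that the section of \eqref{ext:cohn} is $G$-graded, though routine, also deserve to be recorded.
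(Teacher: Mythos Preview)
Your argument is correct and follows essentially the same route as the paper's proof: both apply excision to the Cohn extension, invoke Propositions \ref{prop:q-iso} and \ref{prop:phi-iso} to replace $H(\cK_\omega(E))$ and $H(C_\omega(E))$ by $H(\ell)^{\reg(E)}$ and $H(\ell)^{E^0}$, reduce via $E^0$-additivity and the universal property of $j^h_{G_{\gr}}$ to an identity in $kk^h_{G_{\gr}}(\ell,C_\omega(E))$, and then transport that identity along the isomorphism of Lemma \ref{lem:iso-v-vg} to the ungraded group $kk^h(\ell,C(\ucov E))$. The only cosmetic difference is that the paper packages the last step as the single equation \eqref{eq:cov-v-kkh} and defers its verification to \cite{kkhlpa}*{proof of Theorem 3.4}, whereas you unpack it term by term, first using additivity on the orthogonal decomposition $v=q_v+\sum_e ee^\ast$ and then proving $j^h_{G_{\gr}}(ee^\ast)=\omega(e)\cdot j^h_{G_{\gr}}(r(e))$ via the Murray--von Neumann equivalence $e_1e_1^\ast\sim e_1^\ast e_1=r(e)_{\omega(e)}$ in $C(\ucov E)$; this is precisely the content of the cited argument. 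The bookkeeping you flag as the ``main obstacle'' is exactly what Lemmas \ref{lem:cdot=cdot'} and \ref{lem:iso-v-vg} (and their proofs, which work for any degree-$1$ projection, not just vertices) are there to absorb.
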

\begin{proof} Let $\inc:\cK_\omega(E)\to C_\omega(E)$ be the inclusion map. By Propositions \ref{prop:phi-iso} and \ref{prop:q-iso} together with extension
\eqref{ext:cohn}, for $\xi = H(\varphi)^{-1}\circ H(\inc \circ q)$ we have 
 a distinguished triangle 
\[
H(\ell)^{(\reg(E))} \xto{\xi} H(\ell)^{(E^0)} \to H(L_\omega(E)).
\]
The proof consists in proving that $\xi$ is multiplication by $I-A_\omega^t$ by equivalently showing that
$H(\inc\circ q) = H(\varphi)\circ (I-A_\omega^t)$. By the additivity hypothesis on $H$, it suffices to prove that for each inclusion $\iota_v \colon 
\ell \mapsto \chi_v \ell \subset \ell^{\reg(E)}$ we have 
that $H(\inc q \iota_v) = H(\varphi) (I-A_\omega^t) H(\iota_v)$. By the universal property of $j^h_{G_{\gr}}$, we are reduced to proving that
\[
j^h_{G_{\gr}}(q_v) = j^h_{G_{\gr}}(v) - 
\sum_{e \in s^{-1}(v)} \omega(e) \cdot j^h_{G_{\gr}}(r(e)).
\]
By Lemma \ref{lem:iso-v-vg}, this is equivalent to having
\begin{equation} \label{eq:cov-v-kkh}
j^h(q_{v_1}) = j^h(v_1) - 
\sum_{e \in s^{-1}(v_1)} j^h(r(e)_{\omega(e)})   
\end{equation}
in $kk^h(\ell, C(\ucov{E}))$, which follows from 
\cite{kkhlpa}*{proof of Theorem 3.4} applied to $v_1 \in \ucov{E}^0$.
\end{proof}

Let $E$ be a graph, $G$ a group, and $\omega:E^1\to G$ a weight. The graded \emph{Bowen-Franks} and \emph{dual Bowen-Franks} $G$-modules associated to $\omega$ are
\[
\gBF(E,\omega)=\coker(I-A_\omega^t),\,\, \gBF^\vee(E,\omega)=\coker(I^t-A_\omega).
\]
Observe that for $G=\Z$ and $c_n$ the constant weight $c_n(e)=n\in\Z$ for all $e\in E^1$, $\gBF(E,c_1)=\gBF(E)$ is the $\Z[\sigma]$-module of  \eqref{eq:bfgr}, while 
$\gBF(E,c_0)=\BF(E)$ is the usual Bowen-Franks group. 

\begin{coro}\label{coro:kle} Let $R\in\aha$; equip $R$ with the trivial $G$-grading and
$L_\omega(E)\otimes R$ with the tensor product grading. 

\item[i)] For each $n\in \Z$ there is a short exact sequence
\begin{equation}\label{seq:kle}
0\to\gBF(E,\omega)\otimes KH_{n}(R)\to  KH_n^{G_{\gr}}(L_\omega(E)\otimes R)\to \ker((I-A_\omega^t)\otimes KH_{n-1}(R))\to 0.
\end{equation}
For  $R\in\ahas$, the exact sequence \eqref{seq:kle} for $KH^{h,G_{\gr}}$ also holds.
\item[ii)] If $R$ is regular supercoherent and $G$ and $E$ are countable, then the canonical maps
$K_*(R)\to KH_*(R)$ and $K_*^{G_{\gr}}(L_\omega(E)\otimes R)\to KH_*^{G_{\gr}}(L_\omega(E)\otimes R)$ are isomorphisms. If in addition $R\in\ahas$ and $2$ is invertible in $\ell$, then we also have $K^h_*(R)\iso KH^h_*(R)$ and $K_*^{h,G_{\gr}}(L_\omega(E)\otimes R)\iso KH_*^{h,G_{\gr}}(L_\omega(E)\otimes R)$.
\end{coro}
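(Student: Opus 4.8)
The plan is to derive both parts from the distinguished triangle of Theorem~\ref{thm:triang} (and its non-hermitian counterpart, which holds by the same argument with $kk_{G_{\gr}}$ in place of $kk^h_{G_{\gr}}$). For part~(i) I would apply it to the spectrum-valued functor $H := KH(G\hltimes(-\otimes R))$ on $G_{\gr}-\Alg$ (and, when $R\in\ahas$, to $H^h := KH^h(G\hltimes(-\otimes R))$ on $G_{\gr}-\ahas$), with $R$ carrying the trivial $G$-grading, so that $\pi_n H = KH^{G_{\gr}}_n(-\otimes R)$ and $\pi_n H^h = KH^{h,G_{\gr}}_n(-\otimes R)$. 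First one checks that $H$ is a homotopy invariant, $E$-stable, excisive and $E^0$-additive homology theory: homotopy invariance and excision because $-\otimes_\ell R$ sends polynomial homotopies to polynomial homotopies and preserves exactness of $\ell$-split extensions, $G\hltimes-$ is exact, and $KH$ is homotopy invariant and excisive; $E$-stability because $-\otimes_\ell R$ commutes with the matrix and twisted-involution constructions in the definition of $G$-stability and $G\hltimes M_XB\cong M_X(G\hltimes B)$, reducing the claim to the $\iota_+$- and $G$-stability of $KH^{G_{\gr}}$ (which hold by \eqref{eq:khgr}, Corollary~\ref{coro:kkhgr-ell=khcross} and their non-hermitian counterparts); and $E^0$-additivity \emph{with no cardinality hypothesis}, because $KH$ commutes with filtering colimits and finite products while $G\hltimes-$ and $-\otimes_\ell R$ commute with arbitrary direct sums.

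Next I would identify the terms of the triangle. Since $R$ is trivially graded, $G\hltimes R\cong\bigoplus_{g\in G}R$, so $H(\ell)\simeq\bigvee_{g\in G}KH(R)$, and by Remark~\ref{rem:GactsH} and Lemma~\ref{lem:cdot=cdot'} the $\Z[G^{\op}]$-action with which Theorem~\ref{thm:triang} reads $I-A_\omega^t$ induces on $\pi_n$ the left-translation action on $\Z[G]\otimes KH_n(R)$. Also $G\hltimes(L_\omega(E)\otimes R)\cong(G\hltimes L_\omega(E))\otimes R\cong L(\ucov{(E,\omega)})\otimes R$ by Proposition~\ref{prop:crosscover}, so $\pi_n H(L_\omega(E))=KH^{G_{\gr}}_n(L_\omega(E)\otimes R)$. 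Theorem~\ref{thm:triang} then produces a distinguished triangle of spectra whose homotopy long exact sequence contains, for each $n$, the map
\[
KH_n(R)\otimes\Z[G]^{(\reg(E))}\xrightarrow{\id_{KH_n(R)}\otimes(I-A_\omega^t)}KH_n(R)\otimes\Z[G]^{(E^0)}.
\]
Breaking this long exact sequence into short exact sequences and using right-exactness of $KH_n(R)\otimes-$, so that $\coker\bigl(\id\otimes(I-A_\omega^t)\bigr)=KH_n(R)\otimes\coker(I-A_\omega^t)=\gBF(E,\omega)\otimes KH_n(R)$, gives exactly \eqref{seq:kle}. The case $R\in\ahas$ is verbatim the same with $KH^h$, $KH^{h,G_{\gr}}$, $H^h$ in place of $KH$, $KH^{G_{\gr}}$, $H$, using \eqref{eq:khgr} and that $G\hltimes-$, $-\otimes_\ell R$ are $\ast$-functors compatible with the relevant involutions.

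For part~(ii): a regular supercoherent ring is $K_n$-regular for every $n\in\Z$, so $K_*(R)\iso KH_*(R)$. By Theorems~\ref{thm:k-cross=k-gr} and~\ref{thm:kh-cross=kh-gr}, naturality of $K_*\to KH_*$, and the isomorphism $G\hltimes(L_\omega(E)\otimes R)\cong L(\ucov{(E,\omega)})\otimes R\cong L_R(\ucov{(E,\omega)})$ from above, the comparison $K_*^{G_{\gr}}(L_\omega(E)\otimes R)\to KH_*^{G_{\gr}}(L_\omega(E)\otimes R)$ is identified with $K_*\to KH_*$ of $L_R(\ucov{(E,\omega)})$, so it suffices that this algebra be $K$-regular. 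As $E$ and $G$ are countable, the covering graph $\ucov{(E,\omega)}$ (with vertex and edge sets $E^0\times G$ and $E^1\times G$) is countable, and the Leavitt path algebra of a countable graph over a regular supercoherent ring is again regular supercoherent, hence $K$-regular. For the hermitian half, invertibility of $2$ in $\ell$ allows invoking that a $\ast$-ring in which $2$ is invertible and which is $K$-regular satisfies $K^h_*\cong KH^h_*$; applying this to $R$ and to $L_R(\ucov{(E,\omega)})$ and using Theorem~\ref{thm:kh-cross=kh-gr} completes the proof.

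The main obstacle I anticipate is the verification in part~(i) that $KH(G\hltimes(-\otimes R))$, and its hermitian version, genuinely is an $E$-stable, $E^0$-additive excisive homology theory for \emph{arbitrary} $E$ and $G$ — the additivity being exactly what makes a countability hypothesis unnecessary there — together with the careful bookkeeping of the $\Z[G]$-module structures that ensures the cokernel in the long exact sequence comes out as $\gBF(E,\omega)\otimes KH_n(R)$ and not a twisted variant. In part~(ii) the only non-formal ingredient is the cited stability of regular supercoherence (equivalently $K$-regularity) under the Leavitt path algebra construction for countable graphs.
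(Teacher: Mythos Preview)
Your proof is correct and follows essentially the same route as the paper: apply Theorem~\ref{thm:triang} to the homology theory $KH^{(h),G_{\gr}}(-\otimes R)$, identify $KH^{(h),G_{\gr}}_*(R)\cong\Z[G]\otimes KH^{(h)}_*(R)$ for trivially graded $R$, and for part~(ii) reduce via Proposition~\ref{prop:crosscover} and Theorems~\ref{thm:k-cross=k-gr}, \ref{thm:kh-cross=kh-gr} to $K$- (resp.\ $K^h$-) regularity of $L(\widetilde{(E,\omega)})\otimes R$. The only differences are cosmetic: the paper obtains the non-hermitian sequence from the hermitian one by substituting $\inv(R)$ for $R$, and in part~(ii) it invokes \cite{kkhlpa}*{Lemma 4.3} to get $K$-regularity of $L(F)\otimes R$ directly rather than your stronger (and not obviously available) assertion that this algebra is itself regular supercoherent---you should cite $K$-regularity, which is all that is needed.
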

\begin{proof}
The second assertion of part i) follows from Theorem \ref{thm:triang} applied to $KH^{h,G_{\gr}}(-\otimes R)$, using that, as $R$ is trivially graded,
$KH_*^{h,G_{\gr}}(R)=\Z[G]\otimes KH_*^h(R)$; the first is the particular case with $\inv(R)$ substituted for $R$. By \cite{kkhlpa}*{Lemma 4.3}, if $F$ is a countable graph and $R$ is unital and regular supercoherent then $L(F)\otimes R$ is $K$-regular, and even $K^h$-regular if $2$ is invertible in $R$. In particular this applies when $F$ is the covering graph of $E$ or the edgeless graph on one vertex. Combining this with part i) and using Theorems \ref{thm:k-cross=k-gr} and \ref{thm:kh-cross=kh-gr} and Proposition \ref{prop:crosscover}, we obtain the remaining assertions.
\end{proof}
\begin{rem}\label{rem:kle} If $G$ is abelian, then $-\otimes R:G_{\gr}-\aha\to G_{\gr}-\aha$ is defined for every $R\in G_{\gr}-\ahas$, and a similar argument as that of Corollary \ref{coro:kle} shows that there is an exact sequence
\begin{equation}\label{seq:kle2}
0\to\gBF(E,\omega)\otimes_{\Z[G]} KH^{G_{\gr}}_{n}(R)\to  KH_n^{G_{\gr}}(L_\omega(E)\otimes R)\to \ker((I-A_\omega^t)\otimes_{\Z[G]} KH^{G_{\gr}}_{n-1}(R))\to 0.
\end{equation}
The rest of Corollary \ref{coro:kle}, including the hermitian and regular supercoherent versions of \eqref{seq:kle2} also extend to nontrivially graded $R$ in the obvious way, provided that $G$ is abelian. 

We remark that in the particular case $G=\Z$ and $\omega(e)=1$ for all 
$e\in E^1$, then $I-A_\omega^t=I-\sigma A^t$ is injective by Lemma 
\ref{lem:mononotepi} and the rightmost nonzero term of \eqref{seq:kle2} is 
$\tor^{\Z[\sigma]}_1(\BF_{\gr}(E),KH_{n-1}^{\Z_{\gr}}(R))$.  
\end{rem}
\begin{rem}\label{rem:can}
It follows from Corollary \ref{coro:kle} that there is a canonical homomorphism of $\Z[G]$-modules
\begin{equation}\label{map:can}
\can:\gBF(E,\omega)\to KH_0^{h,G_{\gr}}(L_\omega(E)),\,\, \can(x)=x\otimes[1]
\end{equation}
The map $\can$ is an isomorphism whenever $KH^{h}_0(\ell)=\Z$ and $KH^{h}_{-1}(\ell)=0$. Such is the case, for example, when $\ell=\inv(\ell_0)$ and $\ell_0$ is a field or a PID, and we have $K_0^{G_{\gr}}(L_\omega(E))=\gBF(E,\omega)$.  
\end{rem}

\begin{coro}\label{coro:g-triang}
Assume that $E^0$ is finite. Then there is a distinguished triangle in $kk^h_{G_{\gr}}$
\[
\ell^{\reg(E)} \xto{I-\sigma \cdot A_\omega^t}  \ell^{E^0} \to L_\omega(E).
\]
\end{coro}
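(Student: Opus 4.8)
The plan is to obtain the triangle as the instance $H=j^h_{G_{\gr}}$ of the general triangle produced by Theorem \ref{thm:triang}. First I would record that the arrow makes sense in $kk^h_{G_{\gr}}$: since $E^0$ is finite, so is $\reg(E)$, and $\ell^{\reg(E)}$ and $\ell^{E^0}$ are finite biproducts of copies of $\ell$; by Proposition \ref{prop:end-el} the group ring $\Z[G^{\op}]$ embeds in $kk^h_{G_{\gr}}(\ell,\ell)$ via $g\mapsto m_g$, so the matrix $I-A_\omega^t$ with entries in $\Z[G]$ defines a morphism $\ell^{\reg(E)}\to\ell^{E^0}$ in $kk^h_{G_{\gr}}$, exactly as explained in Remark \ref{rem:GactsH}.

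Next I would check that $j^h_{G_{\gr}}$ meets the hypotheses of Theorem \ref{thm:triang}. By Theorem \ref{thm:kkggr} it is homotopy invariant, excisive, $M_\fX$-stable, $\iota_+$-stable and $G$-stable; taking the fixed set $\fX$ of Convention \ref{conv:gstab} large enough that it dominates $E^0\sqcup E^1\sqcup\N\sqcup G$ (so that in particular the sets $\cP_v$ of paths ending at $v$ used in Propositions \ref{prop:q-iso} and \ref{prop:phi-iso} have cardinality at most $\#\fX$), it is $E$-stable in the sense of Section \ref{sec:triang}. Finally, $j^h_{G_{\gr}}$ is a triangle functor and hence commutes with finite biproducts, so it is $E^0$-additive; this is the one place where the finiteness of $E^0$ is used essentially.

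With these checks in place, Theorem \ref{thm:triang} applied to $H=j^h_{G_{\gr}}$ yields a distinguished triangle $\ell^{\reg(E)}\to\ell^{E^0}\to L_\omega(E)$ in $kk^h_{G_{\gr}}$ whose first map is $I-A_\omega^t$, where the $\Z[G^{\op}]$-coefficients act on $H(\ell)=\ell$ through the ring homomorphism of Remark \ref{rem:GactsH}; for $H=j^h_{G_{\gr}}$ the map $\bar H$ of that remark is the identity, so this homomorphism is tautologically the one of Proposition \ref{prop:end-el}, and the first map of the triangle coincides with the morphism described in the first paragraph. This is precisely the asserted triangle. I do not anticipate a genuine obstacle here: all the substance is already contained in Theorem \ref{thm:triang} and Proposition \ref{prop:end-el}, and the only points requiring care are the formal ones of realizing $\ell^{\reg(E)}$ and $\ell^{E^0}$ as finite biproducts preserved by $j^h_{G_{\gr}}$, keeping the cardinality bookkeeping around $\fX$ honest, and matching the two descriptions of the connecting morphism.
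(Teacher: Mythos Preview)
Your proposal is correct and follows exactly the paper's approach: the paper's proof is the single line ``Apply Theorem \ref{thm:triang} to $H=j^h_{G_{\gr}}$,'' and you have simply unpacked why the hypotheses of that theorem are met, correctly isolating finiteness of $E^0$ as the input that gives $E^0$-additivity of $j^h_{G_{\gr}}$.
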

\begin{proof}
Apply Theorem \ref{thm:triang} to $H=j^h_{G_{\gr}}$. 
\end{proof}
\begin{rmk} 
Applying Corollary \ref{coro:g-triang} to the unique
weight with codomain the trivial group, we
recover the distinguished triangle for $L(E)$ in 
hermitian bivariant $K$-theory given in \cite{kkhlpa}*{Theorem 3.6}, and by \cite{kkh}*{Example 6.2.11}, also that in algebraic bivariant $K$-theory proved in \cite{kklpa1}*{Proposition 5.2}.
\end{rmk}

Let $E$ be a graph with finitely many vertices and let $\can:\gBF(E,\omega)\to KH_0^{h,G_{\gr}}(L(E))$ be as in \eqref{map:can}. For each $G$-graded $\ast$-algebra $A$ we have a map
\[
\ev \colon kk^h_{G_{\gr}}(L(E),A) \to 
\hom_{\Z[G]}(\gBF(E,\omega),KH^{h, G_{\gr}}_0(A)), 
\quad \xi \mapsto KH^{h, G_{\gr}}_0(\xi) \circ \can.        
\]

\begin{coro}\label{coro:uct} 
Let $E$ be a graph with finitely many vertices and $\omega:E^1\to G$ a weight. For each $G$-graded $\ast$-algebra $A$, there is an exact sequence
\[
0 \to KH^{h, G_{\gr}}_1(A) \otimes_{\Z[G]} \gBF^\vee(E,\omega) \to kk^h_{G_{\gr}}(L_\omega(E),A) 
\xto{\ev} \hom_{\Z[G]}(\gBF(E,\omega),KH^{h, G_{\gr}}_0(A)) \to 0.
\]
\end{coro}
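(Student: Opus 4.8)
The plan is to feed the triangle of Theorem \ref{thm:triang} — applied with $H=j^h_{G_{\gr}}$, i.e.\ the distinguished triangle $\ell^{\reg(E)}\xto{I-A_\omega^t}\ell^{E^0}\to L_\omega(E)$ in $kk^h_{G_{\gr}}$ of Corollary \ref{coro:g-triang} — into the functor $kk^h_{G_{\gr}}(-,A)$, which sends distinguished triangles to long exact sequences. Rotating the triangle and using $kk^h_{G_{\gr}}(\Omega^{-1}W,A)=kk^h_{G_{\gr}}(W,\Omega A)$, the long exact sequence contains the stretch
\[
kk^h_{G_{\gr}}(\ell^{E^0},\Omega A)\xto{\delta_1}kk^h_{G_{\gr}}(\ell^{\reg(E)},\Omega A)\to kk^h_{G_{\gr}}(L_\omega(E),A)\xto{\ev}kk^h_{G_{\gr}}(\ell^{E^0},A)\xto{\delta_0}kk^h_{G_{\gr}}(\ell^{\reg(E)},A),
\]
where $\delta_0$ and $\delta_1$ are precomposition with the morphism $I-A_\omega^t$. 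Since $E^0$ is finite, additivity of $kk^h_{G_{\gr}}$ together with Corollary \ref{coro:kkhgr-ell=khcross} (and the remark following it, which interprets $KH^{h,G_{\gr}}_1$ as $kk^h_{G_{\gr}}(\ell,\Omega-)$) turns the four outer terms into $KH^{h,G_{\gr}}_1(A)^{E^0}$, $KH^{h,G_{\gr}}_1(A)^{\reg(E)}$, $KH^{h,G_{\gr}}_0(A)^{E^0}$ and $KH^{h,G_{\gr}}_0(A)^{\reg(E)}$. Exactness of this five-term sequence then yields at once the short exact sequence
\[
0\to\coker(\delta_1)\to kk^h_{G_{\gr}}(L_\omega(E),A)\xto{\ev}\ker(\delta_0)\to 0.
\]

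Next I would identify the two outer groups algebraically. By the ring isomorphism $kk^h_{G_{\gr}}(\ell,\ell)\cong\Z[G^{\op}]\otimes kk^h(\ell,\ell)$ of Proposition \ref{prop:end-el} — with Remark \ref{rem:GactsH} and Lemma \ref{lem:cdot=cdot'} pinning down the resulting $\Z[G]$-action on the groups $KH^{h,G_{\gr}}_i(A)$ — the maps $\delta_0$ and $\delta_1$ are the $\Z[G]$-linear maps $\hom_{\Z[G]}(I-A_\omega^t,\,KH^{h,G_{\gr}}_0(A))$ and $\hom_{\Z[G]}(I-A_\omega^t,\,KH^{h,G_{\gr}}_1(A))$. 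Applying $\hom_{\Z[G]}(-,KH^{h,G_{\gr}}_0(A))$ to the defining presentation $\Z[G]^{\reg(E)}\xto{I-A_\omega^t}\Z[G]^{E^0}\to\gBF(E,\omega)\to 0$, left exactness of $\hom$ identifies $\ker(\delta_0)$ with $\hom_{\Z[G]}(\gBF(E,\omega),KH^{h,G_{\gr}}_0(A))$; a direct check shows that under this identification the map $\ev$ above coincides with precomposition with the triangle map $\ell^{E^0}\to L_\omega(E)$, which by Remark \ref{rem:can} is exactly the map $\ev$ of the statement. For $\delta_1$, I would instead use the natural isomorphism $\hom_{\Z[G]}(\Z[G]^S,N)\cong N\otimes_{\Z[G]}\hom_{\Z[G]}(\Z[G]^S,\Z[G])$ for finite $S$ to rewrite $\delta_1$ as $KH^{h,G_{\gr}}_1(A)\otimes_{\Z[G]}(I-A_\omega^t)^{\vee}$; since the $\Z[G]$-linear dual transposes the matrix, $(I-A_\omega^t)^{\vee}$ is $I^t-A_\omega$, and right exactness of $-\otimes_{\Z[G]}-$ applied to $\Z[G]^{E^0}\xto{I^t-A_\omega}\Z[G]^{\reg(E)}\to\gBF^\vee(E,\omega)\to 0$ gives $\coker(\delta_1)\cong KH^{h,G_{\gr}}_1(A)\otimes_{\Z[G]}\gBF^\vee(E,\omega)$. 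Substituting both identifications into the short exact sequence produces the statement.

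The conceptually harmless but genuinely delicate part is the module-theoretic bookkeeping: one must verify that precomposition with the $kk^h_{G_{\gr}}$-morphism $I-A_\omega^t$ really is the algebraic dual of the corresponding map of finitely generated free $\Z[G]$-modules, keep straight on which side the $\Z[G]$-action sits, and carry the opposite group coming from the $\Z[G^{\op}]$ in Proposition \ref{prop:end-el} through all the identifications. It is precisely this transpose that forces the \emph{dual} Bowen--Franks module $\gBF^\vee(E,\omega)$, rather than $\gBF(E,\omega)$, to appear on the $KH^{h,G_{\gr}}_1$ side. Everything else is a formal consequence of Corollary \ref{coro:g-triang}, Corollary \ref{coro:kkhgr-ell=khcross}, Proposition \ref{prop:end-el}, and the exactness properties of $\hom$ and $\otimes$ over $\Z[G]$.
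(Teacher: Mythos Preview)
Your proposal is correct and follows essentially the same approach as the paper: the paper's proof simply says to apply the argument of \cite{kkhlpa}*{Theorem 12.2} to the triangle of Corollary \ref{coro:g-triang} together with the two free $\Z[G]$-module presentations of $\gBF(E,\omega)$ and $\gBF^\vee(E,\omega)$, and what you have written is a faithful unpacking of precisely that argument. Your explicit attention to the $\Z[G^{\op}]$-vs-$\Z[G]$ bookkeeping and to why the transpose $I^t-A_\omega$ (hence $\gBF^\vee$) appears on the $KH^{h,G_{\gr}}_1$ side is exactly the content hidden behind the citation.
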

\begin{proof} Apply the argument of \cite{kkhlpa}*{Theorem 12.2} to the triangle 
of Corollary \ref{coro:g-triang} and the free $\Z[G]$-module presentations
\begin{gather*}
\Z[G]^{\reg(E)}\overset{I-A_\omega^t}{\lra}\Z[G]^{E^0}\to\gBF(E,\omega)\to 0,\\
\Z[G]^{E^0}\overset{I^t-A_\omega}{\lra}\Z[G]^{\reg(E)}\to\gBF^\vee(E,\omega)
\to 0.
\end{gather*}
\end{proof}

\section{A Van den Bergh triangle in \topdf{$kk^h$}{kkh}}
\label{sec:vdb}

In \cite{vdb}*{Theorem on page 1563} Van den Bergh proves that the graded 
$K$-theory of a $\Z$-graded noetherian regular ring $R$
is related to its ungraded $K$-theory by means of an 
exact sequence
\begin{equation}\label{sel:orig-vdb}
\cdots \to K_{n+1}(R) \to  K^{\Z_{\gr}}_{n}(R) \xto{1-\sigma} K^{\Z_{\gr}}_n(R)
\xto{\forg} K_n(R) \to \cdots
\end{equation}
Here $\sigma$ is the map induced by degree shift and 
$\forg$ is the map induced by the forgetful functor 
$\cat{Proj}_{\Z_{\gr}}(R) \to \cat{Proj}(R)$.

\begin{thm}\label{thm:vdb}
Let $A\in\Z_{\gr}-\aha^*$. Then there is a distinguished triangle in $kk^h$
\[
\xymatrix{
\Z\hltimes A\ar[rr]^{1-\Z\hltimes\sigma}&& \Z\hltimes A\ar[r] & A
}
\]
\end{thm}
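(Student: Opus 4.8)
The plan is to obtain the triangle by pushing the distinguished triangle of Corollary~\ref{coro:g-triang} for the graph $E$ with one vertex $v$ and one loop $e$ through the endofunctor ``tensor with $A$, then take the crossed product''. The classical Van den Bergh sequence \eqref{sel:orig-vdb} is recovered from this by applying $KH^h_*$ and using \eqref{eq:khgr}.

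First I would fix the weight $\omega(e)=1\in\Z$ on $E$, so that $\reg(E)=E^0=\{v\}$, the weighted incidence matrix is $A_\omega=(\sigma)\in\Z[\Z]$, $I=(1)$, and $L_\omega(E)=\ell[t,t^{-1}]$ with $|t|=1$. Corollary~\ref{coro:g-triang} then provides a distinguished triangle in $kk^h_{\Z_{\gr}}$
\[
\ell\xrightarrow{\;1-\sigma\;}\ell\longrightarrow\ell[t,t^{-1}],
\]
where $1-\sigma$ means $j^h_{\Z_{\gr}}(\id_\ell)-m_\sigma$ for $m_\sigma$ as in \eqref{map:elg}. Since $\Z$ is abelian, $-\otimes A$ is defined on $kk^h_{\Z_{\gr}}$ (Lemma~\ref{lem:tensograd}); I would first record that it is a triangulated endofunctor, being the functor induced, via the universal property of $j^h_{\Z_{\gr}}$ (Theorem~\ref{thm:kkggr}), by the excisive, homotopy invariant, $M_\fX$-stable and $\iota_+$-stable homology theory $j^h_{\Z_{\gr}}(-\otimes A)$, using the natural identifications $A\otimes M_XB\cong M_X(A\otimes B)$ and $A\otimes B[t]\cong(A\otimes B)[t]$ together with the fact that $-\otimes_\ell A$ preserves $\ell$-split extensions. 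Applying $-\otimes A$ to the triangle above, using $\ell[t,t^{-1}]\otimes A\cong A[t,t^{-1}]$ as $\Z$-graded $\ast$-algebras and the fact that $m_\sigma\otimes\id_A$ is, by Proposition~\ref{prop:multiabel}, the grading-shift automorphism $\iota_1^{-1}\circ\iota_\sigma$ of $A$ in $kk^h_{\Z_{\gr}}$, I obtain a distinguished triangle in $kk^h_{\Z_{\gr}}$
\[
A\xrightarrow{\;1-\sigma\;}A\longrightarrow A[t,t^{-1}].
\]

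Next I would apply the composite triangle functor $\res^{\Z}_{\{1\}}\circ(\Z\hltimes-)\colon kk^h_{\Z_{\gr}}\to kk^h$, which is triangulated by Theorems~\ref{thm:baaj-skandalis} and \ref{thm:ind-res} and which sends a $\Z$-graded $\ast$-algebra to the underlying $\ast$-algebra of its crossed product. This yields a distinguished triangle in $kk^h$
\[
\Z\hltimes A\xrightarrow{\;1-\Z\hltimes\sigma\;}\Z\hltimes A\longrightarrow\Z\hltimes A[t,t^{-1}],
\]
where $\Z\hltimes\sigma$ denotes the class in $kk^h$ of the crossed-product automorphism of $\Z\hltimes A$ induced by the generator of $\Z$; that $\Z\hltimes$ applied to the shift automorphism of the previous step coincides with this automorphism is exactly the content of Lemma~\ref{lem:cdot=cdot'}, together with Proposition~\ref{prop:end-el}. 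To finish, I would identify the third vertex: $A[t,t^{-1}]=A[\Z]$ with its tensor-product $\Z$-grading, so Corollary~\ref{coro:dade} gives a $kk^h$-equivalence $A\xrightarrow{\sim}\Z\hltimes A[t,t^{-1}]$; replacing the third vertex of the triangle along this isomorphism produces the asserted distinguished triangle $\Z\hltimes A\xrightarrow{1-\Z\hltimes\sigma}\Z\hltimes A\to A$ in $kk^h$.

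The step I expect to be the main obstacle is the identification of the left-hand map $1-\Z\hltimes\sigma$: one has to check carefully that $m_\sigma\otimes\id_A$ is the grading shift (Proposition~\ref{prop:multiabel}) and that $\Z\hltimes$ of this shift is precisely the automorphism of $\Z\hltimes A$ used to define $\Z\hltimes\sigma$ (Lemma~\ref{lem:cdot=cdot'}), keeping track of the $\sigma$ versus $\sigma^{-1}$ convention so that, upon applying $KH^h_*$, the resulting map matches the degree-shift map in \eqref{sel:orig-vdb}. By contrast, verifying that $-\otimes A$ descends to a triangulated endofunctor of $kk^h_{\Z_{\gr}}$ and the algebra isomorphisms $\ell[t,t^{-1}]\otimes A\cong A[t,t^{-1}]=A[\Z]$ and $A\otimes M_XB\cong M_X(A\otimes B)$ should be routine.
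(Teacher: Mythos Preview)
Your proposal is correct and follows essentially the same route as the paper: start from the triangle of Corollary~\ref{coro:g-triang} for the one-loop graph with weight $1$, tensor with $A$, apply $\Z\hltimes-$ followed by the forgetful functor to $kk^h$, and identify the third vertex via Corollary~\ref{coro:dade}. The paper's proof is a terse three-line version of exactly this argument; your additional care in tracking the map $1-\Z\hltimes\sigma$ through Proposition~\ref{prop:multiabel} and (the proof of) Lemma~\ref{lem:cdot=cdot'} fills in details the paper leaves implicit.
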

\begin{proof} Corollary \ref{coro:g-triang} applied to $G=\Z$, the graph $\cR_1$ consisting of a single loop $e$, and the weight $\omega(e)=1$, yields a distinguished triangle in $kk^h_{\Z_{\gr}}$
\[
\ell\xto{1-\sigma} \ell\to \ell[t,t^{-1}]
\]
Tensor this triangle with $A$, apply the functor $\Z\hltimes-:kk^h_{\Z_{\gr}}\to kk^h_{\Z}$ followed by the forgetful functor $kk^h_{\Z}\to kk^h$, and use Corollary \ref{coro:dade}. 
\end{proof}

\begin{coro}\label{coro:vdb} Let $R\in\Z_{\gr}-\aha$, $S\in\Z_{\gr}-\ahas$ and $n\in\Z$. Then there are long exact sequences
\begin{gather}
  \xymatrix{KH_{n+1}(R)\ar[r]&KH^{\Z_{\gr}}_{n}(R)\ar[r]^{1-\sigma} &KH^{\Z_{\gr}}_n(R)\ar[r]& KH_n(R)}\label{lec:khvdb}\\
\xymatrix{KH^h_{n+1}(S)\ar[r]&KH^{h,\Z_{\gr}}_{n}(S)\ar[r]^{1-\sigma} &KH^{h,\Z_{\gr}}_n(S)\ar[r]& KH^h_n(S)}\label{lec:khvdb-her}
  \end{gather}
If $R$ is regular supercoherent, we may substitute $K$ and $K^{\Z_{gr}}$ for 
    $KH$ and $KH^{\Z_{gr}}$ in \eqref{lec:khvdb}. If $S$ is regular supercoherent and $2$ is invertible in $S$, 
    we may substitute $K^h$ and $K^{h,\Z_{gr}}$ for 
    $KH^h$ and $KH^{h,\Z_{gr}}$ in \eqref{lec:khvdb-her}.     
\end{coro}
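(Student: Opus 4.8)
The plan is to apply the homology theory $KH^h_\ast$ to the distinguished triangle of Theorem \ref{thm:vdb}. Since $KH^h_\ast$ agrees, up to suspension, with $kk^h(\ell,-)$ by \cite{kkh}*{Proposition 8.1}, it factors through the triangulated category $kk^h$, so a distinguished triangle in $kk^h$ produces a long exact sequence of $KH^h$-groups. Applied to $\Z\hltimes A\xto{1-\Z\hltimes\sigma}\Z\hltimes A\to A$ for $A\in\Z_{\gr}-\aha^*$, this gives, for every $n\in\Z$,
\[
\cdots\to KH^h_{n+1}(A)\to KH^h_n(\Z\hltimes A)\xto{1-(\Z\hltimes\sigma)_\ast}KH^h_n(\Z\hltimes A)\to KH^h_n(A)\to\cdots.
\]

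Next I would rewrite the middle terms. By \eqref{eq:khgr} we have $KH^h_n(\Z\hltimes A)=KH^{h,\Z_{\gr}}_n(A)$, and under this identification the self-map $(\Z\hltimes\sigma)_\ast$ becomes the degree-shift operator $\sigma$ on $KH^{h,\Z_{\gr}}_n(A)$: the map in question is $\Z\hltimes m_\sigma$ by the proof of Theorem \ref{thm:vdb}, and Lemma \ref{lem:cdot=cdot'} (together with Proposition \ref{prop:end-el}) says exactly that $\Z\hltimes m_\sigma$ corresponds to the $\sigma$-action on graded $K$-theory under $kk^h_{\Z_{\gr}}(\ell,-)\cong kk^h(\ell,\Z\hltimes-)$. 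Taking $A=S$ yields \eqref{lec:khvdb-her}. For the non-hermitian sequence \eqref{lec:khvdb} I would apply the above to the $\ast$-algebra $\inv(R)=R\oplus R^{\op}$ with its free involution (it is $\Z$-graded since $R$ is): using $\Z\hltimes\inv(R)\cong\inv(\Z\hltimes R)$ from Subsection \ref{subsec:gwinv} together with $KH^h_\ast(\inv(T))\cong KH_\ast(T)$ for any ring $T$ (the $KH$, non-graded analogue of Proposition \ref{prop:kgr=khgr-inv}), the hermitian sequence for $\inv(R)$ reads term by term as \eqref{lec:khvdb}.

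Finally, for the regular supercoherent assertions, if $R$ is regular supercoherent then both $R$ and $\Z\hltimes R$ are $K$-regular, so the comparison maps $K_\ast\to KH_\ast$ and, via Theorem \ref{thm:k-cross=k-gr}, $K^{\Z_{\gr}}_\ast\to KH^{\Z_{\gr}}_\ast$ are isomorphisms on these algebras; hence $K$ and $K^{\Z_{\gr}}$ may be substituted for $KH$ and $KH^{\Z_{\gr}}$ in \eqref{lec:khvdb}. The hermitian statement is identical, with $K^h$-regularity in place of $K$-regularity, once $2$ is invertible.

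I expect the main obstacle to be this last point: showing that $\Z\hltimes R$ inherits $K$-regularity (concretely, regular supercoherence) from the $\Z$-graded ring $R$, which is not formal given the preceding sections. The cleanest route is probably to present $\Z\hltimes R$ as a filtering colimit of the ``generalized matrix rings'' $(\chi_F\hltimes 1)(\Z\hltimes R)(\chi_F\hltimes 1)$ built from the homogeneous components $R_d$ and to check these are regular supercoherent, or to invoke an external graded-regularity statement. The identification of $(\Z\hltimes\sigma)_\ast$ with the shift via Lemma \ref{lem:cdot=cdot'} is the other point meriting care, though all the ingredients for it are already in place.
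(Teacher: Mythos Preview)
Your derivation of the $KH$ and $KH^h$ long exact sequences is correct and is exactly the paper's argument: apply $kk^h(\ell,-)$ to the triangle of Theorem \ref{thm:vdb}, use \eqref{eq:khgr} to rewrite the middle terms, and obtain the non-hermitian sequence from the hermitian one via $S=\inv(R)$. Your extra care in identifying $(\Z\hltimes\sigma)_\ast$ with the grading shift through Lemma \ref{lem:cdot=cdot'} is a welcome elaboration; the paper leaves this implicit.

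Where you diverge from the paper is in the regular supercoherent assertion, and your flagged obstacle is genuine: you would need $\Z\hltimes R$ to be $K$-regular, and nothing in the preceding sections gives this for an arbitrary $\Z$-graded regular supercoherent $R$. The paper does not attempt this. Instead it invokes Corollary \ref{coro:kle} together with the \emph{module-theoretic} Dade theorem \ref{thm:h-dade}. The point of the latter is that for the strongly graded algebra $S[t,t^{-1}]$ one has an equivalence of categories with duality between $\cat{Proj}_{\Z_{\gr}}(S[t,t^{-1}])$ and $\cat{Proj}(S)$, so $K^{h,\Z_{\gr}}_*(S[t,t^{-1}])\cong K^h_*(S)$ holds at the level of genuine (not merely homotopy) $K$-theory; your route through Corollary \ref{coro:dade} only yields this after passing to $KH^h$. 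Combined with the $K$-regularity input from Corollary \ref{coro:kle}, this lets one compare the $KH$ sequence with its $K$-theoretic counterpart term by term without ever needing $\Z\hltimes R$ itself to be regular supercoherent. So rather than pursuing the filtered-colimit description of $\Z\hltimes R$ you sketch, the cleaner fix is to use Theorem \ref{thm:h-dade} for the outer terms and Corollary \ref{coro:kle} for the comparison, as the paper does.
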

\begin{proof} Observe that \eqref{lec:khvdb} is the particular case $S=\inv(R)$ of \eqref{lec:khvdb-her}. We prove \eqref{lec:khvdb-her}. By excision, we can assume that $R$ is unital. 
Applying $kk^h(\ell,-)$ to the triangle of Theorem \ref{thm:vdb}
we obtain \eqref{lec:khvdb-her}. The assertions about the regular supercoherent case follow from Corollary \ref{coro:kle} and hermitian Dade's theorem \ref{thm:h-dade}. 
\end{proof}

\begin{rem}\label{rem:vdb}
The exact sequences of Corollary \ref{coro:vdb} are similar to that of Van den Bergh \eqref{sel:orig-vdb}, except perhaps in the map going from graded to ungraded $K$-theory; we shall presently see that the two maps are equivalent up to appropriate identifications. Let $R$ be unital ring and $\inc: R\subset R[t,t^{-1}]$ the canonical inclusion. The forgetful functor $\Z_{\gr}-\cat{mod}_R\to \cat{mod}_R$ is naturally equivalent to the composite of the scalar extension along $\inc$ with the functor $\Z_{\gr}-\cat{mod}_{R[t,t^{-1}]}\to \cat{mod}_R$, $M\mapsto M_0$. Under the equivalence  \eqref{mor:mod-cat-iso} the latter functor corresponds to $\cat{mod}_{\Z\hltimes R}\to \cat{mod}_R$, $M\mapsto M\cdot\chi_0$, which is left inverse to the scalar extension along $a\mapsto \chi_0\hltimes a$ (see Remark \ref{rmk:dade-h-cross}). Summing up, the map $\forg$ in \eqref{sel:orig-vdb} corresponds to that in Corollary \ref{coro:vdb}, which is defined as the composite of those induced by the inclusion $R\subset R[t,t^{-1}]$ and by the $kk$-inverse of the inclusion as the homogeneous component of degree zero coming from Corollary \ref{coro:dade}. \end{rem}

\section{A graded classification result}
\label{sec:classif}

We now proceed to classify Leavitt path algebras as objects in $kk^h_{\Z_{\gr}}$
in terms of the Bowen-Franks $\Z[\sigma]$-modules of their associated 
graphs.

\begin{thm}\label{lem:liftkkz}
Let $E$ and $F$ be graphs with finitely many vertices. If $\xi \colon \gBF(E)\to \gBF(F)$
is a $\Z[\sigma]$-module isomorphism, then there exists an isomorphism $\overline{\xi} \colon L(E) \to L(F)$
in $kk^h_{\Z_{\gr}}$ such that $\ev(\overline{\xi}) = \can \circ \xi$.
\end{thm}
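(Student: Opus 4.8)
The plan is to present $L(E)$ and $L(F)$ as cones of their incidence matrices in $kk^h_{\Z_{\gr}}$ and to lift $\xi$ through the universal coefficient sequence. Applying Corollary~\ref{coro:g-triang} to $G=\Z$ and the constant weight $\omega\equiv 1$ (so that $A_\omega=\sigma A_E$ by \eqref{intro:weightind}, and $\coker(I-A_\omega^t)=\gBF(E)$ as in \eqref{eq:bfgr}), one obtains distinguished triangles
\[
\ell^{\reg(E)}\xto{\ \phi_E\ }\ell^{E^0}\xto{\ \psi_E\ } L(E)\xto{\ \partial_E\ }\Omega^{-1}\ell^{\reg(E)},\qquad \phi_E=I-\sigma A_E^t,
\]
and likewise for $F$. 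Since $kk^h_{\Z_{\gr}}$ is additive and $E^0,\reg(E)$ are finite, $kk^h_{\Z_{\gr}}(\ell^X,\ell^Y)$ is the group of $Y\times X$ matrices over $R:=kk^h_{\Z_{\gr}}(\ell,\ell)\cong\Z[\sigma]\otimes_\Z KH^h_0(\ell)$ (Proposition~\ref{prop:end-el}) with matrix composition, and $\phi_E$ is the morphism determined by $I-\sigma A_E^t$. First I would invoke the universal coefficient sequence of Corollary~\ref{coro:uct} with $A=L(F)$: it gives surjectivity of $\ev\colon kk^h_{\Z_{\gr}}(L(E),L(F))\to\hom_{\Z[\sigma]}(\gBF(E),KH^{h,\Z_{\gr}}_0(L(F)))$ and identifies its kernel with $KH^{h,\Z_{\gr}}_1(L(F))\otimes_{\Z[\sigma]}\gBF^\vee(E)$; note that $\can$ exists unconditionally by Remark~\ref{rem:can}, and that applying $kk^h_{\Z_{\gr}}(-,A)$ to the triangle of $E$ together with Corollary~\ref{coro:kkhgr-ell=khcross} identifies $\ev$ with precomposition by $\psi_E$ on the relevant subgroup.

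Next I would build the candidate. Choose $\overline\xi\in kk^h_{\Z_{\gr}}(L(E),L(F))$ with $\ev(\overline\xi)=\can_F\circ\xi$ and $\overline\zeta\in kk^h_{\Z_{\gr}}(L(F),L(E))$ with $\ev(\overline\zeta)=\can_E\circ\xi^{-1}$; the first equation is exactly the required $\ev(\overline\xi)=\can\circ\xi$, so it only remains to see that such $\overline\xi$ can be taken to be an isomorphism. From $\ev(\alpha)=KH^{h,\Z_{\gr}}_0(\alpha)\circ\can$ and the fact that $\can$ is induced by $\psi$, one gets $\ev(\beta\alpha)=KH^{h,\Z_{\gr}}_0(\beta)\circ\ev(\alpha)$; hence $\ev(\overline\zeta\,\overline\xi)=KH^{h,\Z_{\gr}}_0(\overline\zeta)\circ\can_F\circ\xi=\can_E\circ\xi^{-1}\circ\xi=\can_E=\ev(\id_{L(E)})$, and symmetrically $\ev(\overline\xi\,\overline\zeta)=\ev(\id_{L(F)})$. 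Thus $\overline\zeta\,\overline\xi-\id_{L(E)}$ and $\overline\xi\,\overline\zeta-\id_{L(F)}$ lie in the kernels of the respective evaluation maps.

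The last step is to show that $\ker(\ev)$ is a square-zero ideal — the algebraic analogue of the corresponding fact in the Rosenberg--Schochet universal coefficient theorem, available here because $\gBF(E)$ has projective dimension at most $1$ over $\Z[\sigma]$, a length-one free resolution being provided by $\phi_E$ (cf. Lemma~\ref{lem:mononotepi}). Concretely, $\phi_E$ and $\phi_F$ are left-cancellable in $kk^h_{\Z_{\gr}}$: the determinant of the $\reg(E)\times\reg(E)$ submatrix of $I-\sigma A_E^t$ is a polynomial in $\sigma$ with constant term $1$ and hence a non-zero-divisor on every free $R$-module, even when $KH^h_0(\ell)$ has torsion, so $\phi_E\cdot m=0$ forces $m=0$. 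Given a phantom $\chi\in\ker(\ev)$, exactness of the long exact sequence yields a factorization $\chi=g\circ\partial_E$ with $g\colon\Omega^{-1}\ell^{\reg(E)}\to L(F)$; for a second phantom $\chi'=g'\circ\partial_F$ one then has $\chi'\chi=g'\circ(\partial_F\circ g)\circ\partial_E$, and $\partial_F\circ g\in kk^h_{\Z_{\gr}}(\ell^{\reg(E)},\ell^{\reg(F)})$ lies in the kernel of left multiplication by $\phi_F$, which is $0$; so $\chi'\chi=0$. In particular $\ker(\ev_{L(E),L(E)})^2=0$, whence $\overline\zeta\,\overline\xi=\id+\varepsilon$ with $\varepsilon^2=0$ is invertible (inverse $\id-\varepsilon$), and likewise $\overline\xi\,\overline\zeta$; a morphism admitting both a left and a right inverse being invertible, this shows $\overline\xi$ is an isomorphism with $\ev(\overline\xi)=\can\circ\xi$.

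I expect the bookkeeping of this last paragraph to be the main obstacle: identifying $\ev$ with precomposition along $\psi_E$ so that phantoms are precisely the maps it kills, tracking the $\sigma\leftrightarrow\sigma^{-1}$ twist relating $\Z[G^{\op}]$ and $\Z[\sigma]$ when matching cokernels of $\phi_E$ over $R$ with base changes of $\gBF(E)$, and checking left-cancellability of $I-\sigma A_E^t$ over the possibly non-reduced ring $R=\Z[\sigma]\otimes_\Z KH^h_0(\ell)$. Everything else is formal manipulation in the triangulated category $kk^h_{\Z_{\gr}}$.
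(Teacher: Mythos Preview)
Your proposal is correct and follows the same approach as the paper: the paper's proof merely observes that Lemma~\ref{lem:mononotepi} supplies length-one free $\Z[\sigma]$-resolutions of $\gBF(E)$ and $\gBF(F)$ and then defers the entire lifting argument to \cite{kkhlpa}*{Lemma 6.4}, whose content is precisely the UCT-plus-square-zero-kernel argument you have spelled out. The pure exactness statement in Lemma~\ref{lem:mononotepi} is exactly what guarantees the left-cancellability of $I-\sigma A_E^t$ over $R=\Z[\sigma]\otimes KH^h_0(\ell)$ that your final step requires, so the bookkeeping concerns you flag are handled by that lemma.
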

\begin{proof} By Lemma \ref{lem:mononotepi}, we have length one free $\Z[\sigma]$-module resolutions
of $\gBF(E)$ and $\gBF(F)$ which guarantee that the 
argument of \cite{kkhlpa}*{Lemma 6.4} can be carried out
under the present hypotheses. 
\end{proof}

\begin{thm} \label{thm:classif}
Assume that $KH_{-1}(\ell) = 0$ and that the canonical morphism $\Z \to KH_0(\ell)$ 
is an isomorphism. Let $E$ and $F$ be graphs with finitely many vertices. Then the following 
are equivalent:
\begin{itemize}
    \item[(i)] The algebras $L(E)$ and $L(F)$ are $kk^h_{\Z_{\gr}}$-isomorphic.
    \item[(ii)] The algebras $L(E)$ and $L(F)$ are $kk_{\Z_{\gr}}$-isomorphic.
    \item[(iii)] The $\Z[\sigma]$-modules $\gBF(E)$ and $\gBF(F)$ are isomorphic.  
\end{itemize}
\end{thm}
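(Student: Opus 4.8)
The plan is to prove the cycle of implications (iii)$\Rightarrow$(i)$\Rightarrow$(ii)$\Rightarrow$(iii); note that the two hypotheses on $KH_*(\ell)$ will only be used for the last one. The implication (iii)$\Rightarrow$(i) is nothing but Theorem \ref{lem:liftkkz}: any $\Z[\sigma]$-module isomorphism $\gBF(E)\iso\gBF(F)$ lifts to an isomorphism $L(E)\iso L(F)$ in $kk^h_{\Z_{\gr}}$, with no hypothesis on $\ell$ beyond the standing one \eqref{eq:lambda}.

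For (i)$\Rightarrow$(ii) I would use that the forgetful functor $\Z_{\gr}-\Alg^\ast\to\Z_{\gr}-\Alg$ induces a triangle functor $kk^h_{\Z_{\gr}}\to kk_{\Z_{\gr}}$ which is the identity on objects. Indeed, forgetting involutions carries weakly split extensions, polynomial homotopies and the algebra inclusions witnessing $M_\fX$-, $\iota_+$- and $\Z$-stability to data of the same type, so the composite $\Z_{\gr}-\Alg^\ast\to\Z_{\gr}-\Alg\xto{j_{\Z_{\gr}}}kk_{\Z_{\gr}}$ is a $\Z$-stable, homotopy invariant, $M_\fX$-stable and $\iota_+$-stable excisive homology theory; the triangle functor is then produced by the universal property of $j^h_{\Z_{\gr}}$ (Theorem \ref{thm:kkggr}). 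It carries a $kk^h_{\Z_{\gr}}$-isomorphism $L(E)\iso L(F)$ to a $kk_{\Z_{\gr}}$-isomorphism.

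The substance lies in (ii)$\Rightarrow$(iii). Here I would apply the functor $KH_0^{\Z_{\gr}}$. By the non-hermitian analogue of Corollary \ref{coro:kkhgr-ell=khcross} one has $KH_0^{\Z_{\gr}}(-)\cong kk_{\Z_{\gr}}(\ell,-)$, so $KH_0^{\Z_{\gr}}$ descends to $kk_{\Z_{\gr}}$; and by the non-hermitian analogue of Proposition \ref{prop:end-el} together with the discussion of Section \ref{sec:enrich}, the $\Z[\sigma]$-module structure on $kk_{\Z_{\gr}}(\ell,A)$ is given by precomposition with the classes $m_\sigma^{\pm1}\in kk_{\Z_{\gr}}(\ell,\ell)$, hence is automatically natural in $A\in kk_{\Z_{\gr}}$. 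Thus $KH_0^{\Z_{\gr}}$ refines to a functor $kk_{\Z_{\gr}}\to\cat{mod}_{\Z[\sigma]}$, and a $kk_{\Z_{\gr}}$-isomorphism $L(E)\iso L(F)$ produces a $\Z[\sigma]$-module isomorphism $KH_0^{\Z_{\gr}}(L(E))\iso KH_0^{\Z_{\gr}}(L(F))$. To conclude I would identify $KH_0^{\Z_{\gr}}(L(E))$ with $\gBF(E)$ as a $\Z[\sigma]$-module, and similarly for $F$: applying Corollary \ref{coro:kle} in its non-hermitian form with the constant weight $1$, with $R=\ell$ trivially graded, and with $n=0$, yields a short exact sequence of $\Z[\sigma]$-modules
\[
0\to\gBF(E)\otimes KH_0(\ell)\xto{\can}KH_0^{\Z_{\gr}}(L(E))\to\ker\bigl((I-\sigma A_E^t)\otimes KH_{-1}(\ell)\bigr)\to 0,
\]
whose outer terms collapse to $\gBF(E)$ and $0$ once we feed in $\Z\iso KH_0(\ell)$ and $KH_{-1}(\ell)=0$ (cf. Remark \ref{rem:can}). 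Composing the three $\Z[\sigma]$-module isomorphisms gives (iii).

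The main difficulty is conceptual rather than computational: one must be careful that the identification $\gBF(E)\cong KH_0^{\Z_{\gr}}(L(E))$ is genuinely an isomorphism of $\Z[\sigma]$-modules, not merely of abelian groups, and that $KH_0^{\Z_{\gr}}$ endowed with this $\Z[\sigma]$-structure really is functorial on $kk_{\Z_{\gr}}$. Both points are already secured by Corollary \ref{coro:kle}, Remark \ref{rem:can} and the naturality of the action introduced in Section \ref{sec:enrich}, so what remains is essentially bookkeeping.
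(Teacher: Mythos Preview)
Your proof is correct and follows essentially the same route as the paper: the cycle (iii)$\Rightarrow$(i)$\Rightarrow$(ii)$\Rightarrow$(iii), with Theorem \ref{lem:liftkkz} for the first implication, the forgetful functor $kk^h_{\Z_{\gr}}\to kk_{\Z_{\gr}}$ for the second, and the identification $kk_{\Z_{\gr}}(\ell,L(E))\cong\gBF(E)$ under the hypotheses on $KH_*(\ell)$ for the third. The only cosmetic difference is that the paper extracts this last identification directly from the triangle of Corollary \ref{coro:g-triang} together with $kk_{\Z_{\gr}}(\ell,\ell)\cong\Z[\sigma]$, whereas you package the same computation via the short exact sequence of Corollary \ref{coro:kle}; these are two phrasings of the same long exact sequence.
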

\begin{proof} The fact that (i) implies (ii) amounts to applying the forgetful functor 
$kk^h_{\Z_{\gr}} \to kk_{\Z_{\gr}}$. On the other hand, the assumptions
on $\ell$ imply that $kk_{\Z_{\gr}}(\ell, \ell) \cong \Z[\sigma]$; this together with the case $G=\Z$, $\omega=c_1$ of 
 Corollary \ref{coro:g-triang} shows that $kk_{\Z_{\gr}}(\ell, L(E)) \cong \gBF(E)$ and likewise for 
the analogue statement replacing $E$ by $F$. It follows that (ii) implies (iii).
Finally, (iii) implies (i) by Theorem \ref{lem:liftkkz}.
\end{proof}

\begin{bibdiv} 
\begin{biblist}

\bib{lpabook}{book}{
author={Abrams, Gene},
author={Ara, Pere},
author={Siles Molina, Mercedes},
title={Leavitt path algebras}, 
date={2017},
series={Lecture Notes in Math.},
volume={2008},
publisher={Springer},
doi={$10.1007/978-1-4471-7344-1$},
}

\bib{steinberg}{article}{
   author={Ara, Pere},
   author={Hazrat, Roozbeh},
   author={Li, Huanhuan},
   author={Sims, Aidan},
   title={Graded Steinberg algebras and their representations},
   journal={Algebra Number Theory},
   volume={12},
   date={2018},
   number={1},
   pages={131--172},
   issn={1937-0652},
   review={\MR{3781435}},
   doi={10.2140/ant.2018.12.131},
}
\bib{amp}{article}{
    AUTHOR={Ara, P.},
    AUTHOR={Moreno, M. A.},
    AUTHOR={Pardo, E.},
     TITLE = {Nonstable {$K$}-theory for graph algebras},
   JOURNAL = {Algebr. Represent. Theory},
    VOLUME = {10},
      YEAR = {2007},
    NUMBER = {2},
     PAGES = {157--178},
      ISSN = {1386-923X},
       DOI = {10.1007/s10468-006-9044-z},
       URL = {https://doi.org/10.1007/s10468-006-9044-z},

}
\bib{tesigui}{thesis}{
author={Arnone, Guido},
title={\'Algebras de Leavitt y $K$-teor\'\i a bivariante hermitiana graduada},
type={Diploma Thesis},
address={Buenos Aires},
date={2021},
eprint={http://cms.dm.uba.ar/academico/carreras/licenciatura/tesis/2021/Arnone.pdf},
}

\bib{alg2.5}{book}{
author={Corti\~nas, Guillermo},
title={\'Algebra II$+1/2$},
publisher={Departamento de Matem\'atica, Facultad de Ciencias Exactas y Naturales, Universidad de Buenos Aires},
series={Cursos y seminarios de matem\'atica, Serie B},
volume={Fasc\'\i culo 13},
}

\bib{kkhlpa}{article}{
   author={Cortiñas, Guillermo},
   title={Classifying Leavitt path algebras up to
   involution preserving homotopy},
   date={2021},
   eprint={arXiv:2101.05777v3 [math.KT]}
}

\bib{kklpa1}{article}{
   author={Cortiñas, Guillermo},
   author={Montero, Diego},
   title={Algebraic bivariant $K$-theory and Leavitt path algebras},
   journal={J. Noncommut. Geom.},
   volume={15},
   date={2021},
   number={1},
   pages={113--146},
   issn={1661-6952},
   review={\MR{4248209}},
   doi={10.4171/jncg/397},
}

\bib{kk}{article}{
   author={Cortiñas, Guillermo},
   author={Thom, Andreas},
   title={Bivariant algebraic $K$-theory},
   journal={J. Reine Angew. Math.},
   volume={610},
   date={2007},
   pages={71--123},
   issn={0075-4102},
   review={\MR{2359851}},
}

\bib{kkh}{article}{
  author={Cortiñas, Guillermo},
  author={Vega, Santiago},
  title={Bivariant Hermitian K-theory and Karoubi's fundamental theorem},
  date={2021},
  eprint={arXiv:2012.09260v2 [math.KT]},
}

\bib{cmrkk}{book}{
   author={Cuntz, Joachim},
   author={Meyer, Ralf},
   author={Rosenberg, Jonathan M.},
   title={Topological and bivariant $K$-theory},
   series={Oberwolfach Seminars},
   volume={36},
   publisher={Birkh\"{a}user Verlag, Basel},
   date={2007},
   pages={xii+262},
   isbn={978-3-7643-8398-5},
   review={\MR{2340673}},
}

\bib{dade}{article}{
   author={Dade, Everett C.},
   title={Group-graded rings and modules},
   journal={Math. Z.},
   volume={174},
   date={1980},
   number={3},
   pages={241--262},
   issn={0025-5874},
   review={\MR{593823}},
   doi={10.1007/BF01161413},
}

\bib{kkg}{article}{
   author={Ellis, Eugenia},
   title={Equivariant algebraic $kk$-theory and adjointness theorems},
   journal={J. Algebra},
   volume={398},
   date={2014},
   pages={200--226},
   issn={0021-8693},
   review={\MR{3123759}},
}

\bib{hazrat}{article}{
   author={Hazrat, Roozbeh},
   title={The graded Grothendieck group and the classification of Leavitt
   path algebras},
   journal={Math. Ann.},
   volume={355},
   date={2013},
   number={1},
   pages={273--325},
   issn={0025-5831},
   review={\MR{3004584}},
   doi={10.1007/s00208-012-0791-3},
}

\bib{hazbook}{book}{
   author={Hazrat, Roozbeh},
   title={Graded rings and graded Grothendieck groups},
   series={London Mathematical Society Lecture Note Series},
   volume={435},
   publisher={Cambridge University Press, Cambridge},
   date={2016},
   pages={vii+235},
   isbn={978-1-316-61958-2},
   review={\MR{3523984}},
}

\bib{preuhyper}{article}{ AUTHOR = {Preusser, Raimund},
     TITLE = {Leavitt path algebras of hypergraphs},
   JOURNAL = {Bull. Braz. Math. Soc. (N.S.)},
    VOLUME = {51},
      YEAR = {2020},
    NUMBER = {1},
     PAGES = {185--221},
      ISSN = {1678-7544},
       DOI = {10.1007/s00574-019-00150-3},
       URL = {https://doi.org/10.1007/s00574-019-00150-3},}
       
\bib{marcoherm}{article}{
AUTHOR = {Schlichting, Marco},
     TITLE = {Hermitian {$K$}-theory of exact categories},
   JOURNAL = {J. K-Theory},
    VOLUME = {5},
      YEAR = {2010},
    NUMBER = {1},
     PAGES = {105--165},
      ISSN = {1865-2433},
       DOI = {10.1017/is009010017jkt075},
       URL = {https://doi.org/10.1017/is009010017jkt075},}

\bib{marcoMV}{article}{
   author={Schlichting, Marco},
   title={The Mayer-Vietoris principle for Grothendieck-Witt groups of
   schemes},
   journal={Invent. Math.},
   volume={179},
   date={2010},
   number={2},
   pages={349--433},
   issn={0020-9910},
   review={\MR{2570120}},
   doi={10.1007/s00222-009-0219-1},
}

\bib{vdb}{article}{
   author={Van den Bergh, M.},
   title={A note on graded $K$-theory},
   journal={Comm. Algebra},
   volume={14},
   date={1986},
   number={8},
   pages={1561--1564},
   issn={0092-7872},
   review={\MR{859452}},
   doi={10.1080/00927878608823384},
}
\end{biblist}
\end{bibdiv}

\end{document}